\newtheorem{theorem}{Theorem}[section]
\newtheorem{proposition}[theorem]{Proposition}
\newtheorem{corollary}[theorem]{Corollary}
\newtheorem{preremark}{Remark}
\newenvironment{remark}    {\begin{preremark}\rm}{\end{preremark}}
\newtheorem{lemma}[theorem]{Lemma}
\newtheorem{preexample}{Example}
\theoremstyle{definition}
\theoremstyle{remark}
\newcommand{\iso}{\cong}
\newcommand{\RR}{{\mathbb R}}
\newcommand{\ZZ}{{\mathbb Z}}
 \newcommand{\Aff}{\mathbb A}
\renewcommand{\to}{\rightarrow}
\newcommand{\HF}{\operatorname{HF}}
\newcommand\depth{\operatorname{depth}}
\newcommand\Ext{\operatorname{Ext}}
\newenvironment{amssidewaysfigure}
  {\begin{sidewaysfigure}\vspace*{.5\textwidth}\begin{minipage}{\textheight}\centering}
  {\end{minipage}\end{sidewaysfigure}}
\begin{document}
\title[WLP and stellar subdivisions]{Weak Lefschetz Property and stellar subdivisions
     of Gorenstein complexes}

\author{Janko B\"ohm}
\address{Janko B\"ohm\\
Department of Mathematics\\
University of Kaiserslautern\\
Erwin - Schr\"odinger - Str.\\
67663 Kaiserslautern\\
Germany}
\email{boehm@mathematik.uni-kl.de}

\author[Stavros~A.~Papadakis]{Stavros~Argyrios~Papadakis}
\address{Stavros~Argyrios~Papadakis\\
Department of Mathematics \\
University of Ioannina\\
Ioannina, 45110  \\
Greece}
\email{spapadak@cc.uoi.gr}

\thanks{We  thank  Christos Athanasiades for suggesting the problem and
   David Eisenbud,  Satoshi Murai and Eran Nevo for useful discussions. 
   We benefited from experiments with the computer algebra program 
  Macaulay2 \cite{GS}. J. B. was supported by DFG (German Research 
Foundation) through Grant BO3330/1-1.
 For different parts of the project S.P. was financially supported by
  FCT, Portugal and   RIMS, Kyoto University, Japan.}

\subjclass[2010]{Primary 13F55; Secondary 13H10, 13D40, 05E40.}

\begin {abstract}
Assume $\sigma$ is a face of a Gorenstein$*$ simplicial complex $D$. We investigate
the question of whether the Weak Lefschetz Property of 
the Stanley--Reisner ring $k[D]$
(over an infinite field $k$) is equivalent to the same property of the Stanley--Reisner
ring $k[D_{\sigma}]$ of the stellar subdivision $D_{\sigma}$. We prove that
this is the case if the dimension of $\sigma$ is big compared to the codimension.
\end {abstract} 

\maketitle

\tableofcontents

\section {Introduction}

An important open question in algebraic combinatorics is whether for a 
simplicial sphere, or more generally for a Gorenstein$*$ simplicial complex,
the f-vector satisfies McMullen's g-conjecture. For details see for example 
  \cite {St1} or  \cite [Section~5.6]{Fu}.
It is well-known that for the $g$-conjecture to hold  it is enough
to prove that the Stanley--Reisner ring $\RR[D]$  	of $D$ over the real numbers
satisfies the
Weak Lefschetz Property (WLP for short). Actually Richard Stanley 
\cite{St3} proved
that if $D$ is the boundary complex of a convex simplicial polytope
it holds that $\RR[D]$ satisfies the even stronger 
Strong Lefschetz Property (SLP for short).    

Eric Babson and Eran Nevo \cite{BN}  proved  that if $k$ is an
infinite field of characteristic
$0$,  $D$ is a homology sphere with  $k[D]$ SLP
and $\sigma$ is a face of $D$ with $k[L]$ SLP 
(where $L$ denotes the link of $\sigma$ in $D$) 
then  it follows 
that $k[D_{\sigma}]$ has the SLP,
where $D_{\sigma}$ denotes the stellar subdivision of
$D$ with respect to $\sigma$.
 We investigate 
similar questions for $D$ a Gorenstein* simplicial complex
and SLP replaced by WLP. Using constructions motivated by
the interpretation of stellar subdivision in terms of Kustin--Miller 
unprojections \cite{BP1},   we prove  in Corollary~\ref{cor!theiffstatement}
that if $2 (\dim \sigma) > \dim D + 1$
and $k$ denotes an infinite field then  the Stanley--Reisner ring
$k[D]$ has the WLP if and only if  $k[D_{\sigma}]$ has the WLP.
In addition, in Corollary~\ref{cor!oneDirection3Statements} we prove that
if $k[L]$ has the  SLP  and $k[D]$ has the WLP  (or more generally 
if $K[D]$ has the WLP and $k[L]$ satisfies a certain property 
we call $M_{q,p_1}$, see Theorem~\ref{thm!forstellarwlp})
then it follows that $k[D_{\sigma}]$ has the WLP.

Section~\ref{sec!general_notations} introduces the basic notations,
while Section~\ref{subst!generalLemmasNeeded}
presents a number of general lemmas we need.
Section~\ref{sec!wlpofstellars} is the core of the 
paper and provides statements and proofs of the main results.
Section~\ref{sec!furtherresults} contains
some further results and constructions which could, perhaps, prove useful in 
attacking  the problem of whether $k[D]$ WLP is equivalent 
to  $k[D_{\sigma}]$ WLP without any assumptions
on the dimension of $\sigma$.   If this equivalence was to be
proven, it would then have as corollary 
the $g$-conjecture  for the class of PL-spheres, cf. \cite[Remark~1.3.2]{BN}.

We illustrate the structure of the arguments of the paper in Figure~\ref{fig struct}. 
The four central technical lemmas are shown in red, as well as 
Corollary~\ref{cor!theiffstatement}. The results of 
Section~\ref{sec!furtherresults}, including an alternative proof of 
Corollary~\ref{cor!theiffstatement}, are depicted in grey.

\section {Notation}  \label{sec!general_notations}
\begin{amssidewaysfigure}
    \includegraphics[scale=0.51]{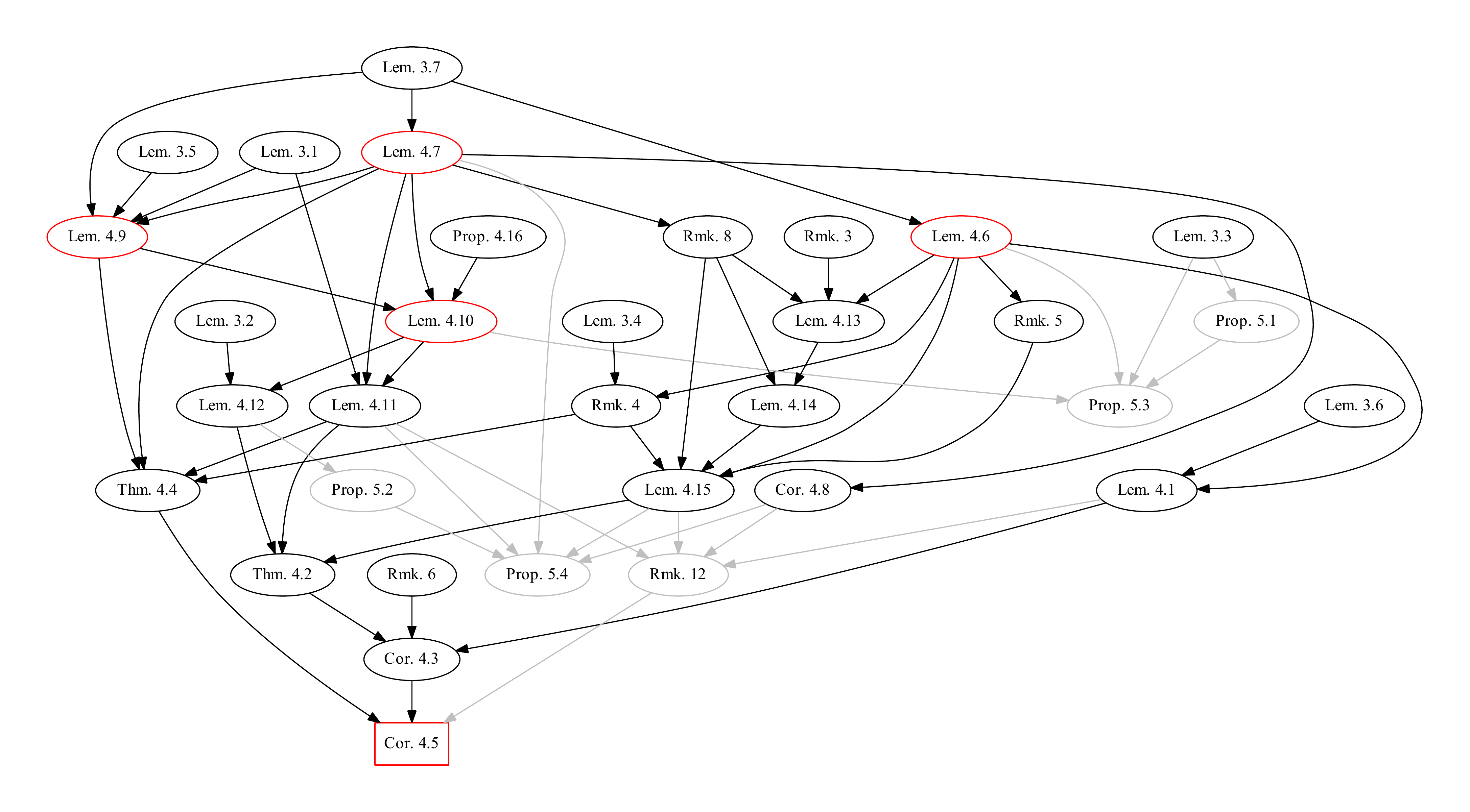}
    \caption{Structure of the arguments of the paper.}
    \label{fig struct}
\end{amssidewaysfigure}
In the following $k$ denotes an infinite field of arbitrary characteristic. 
All graded $k$-algebras will be commutative, Noetherian and of the form 
$G= \oplus_{i \geq 0} G_i$ with $G_0=k$
and  $\dim_k G_i < \infty $ for all $i$. 
The \emph{Hilbert function} of $G$
is the function $\HF(G) : \ZZ \to \ZZ$,
$ m \mapsto \dim_k G_m$. The $k$-algebra $G$
is called standard graded if it is generated,
as a $k$-algebra, by $G_1$.   An element $a \in G$ is called linear if $a \in G_1$.
For a polynomial ring we use the notions of monomial order, 
initial term, initial ideal and reverse lexicographic
order as defined in \cite [Section~15] {Ei}. 
If $G$ is a standard graded $k$-algebra with $\dim G =d$
and $f_1, \dots , f_d \in G_1$ are (Zariski) 
general linear elements of $G$ we call  $G/(f_1 , \dots , f_d)$
a general Artinian reduction  of $G$.

We say that an Artinian standard graded algebra $F$
has the \emph{Weak Lefschetz Property} (WLP for short)
if for  general $\omega \in F_1$ and all $i$ the multiplication
by $\omega$ map $F_i \to F_{i+1}$ is of maximal rank,
which means that it is injective or surjective (or both).
It is well-known (see, for example, \cite[Lemma 4.1] {BN})
that   $F$ has the WLP
if and only if there exists $a \in F_1$ such that
for  all $i$ the multiplication
by $a$ map $F_i \to F_{i+1}$ is of maximal rank. 

We say that a standard graded $k$-algebra $G$ with
$\dim G \geq 1$ has the WLP if it is Cohen--Macaulay
and for general linear elements
$f_1, \dots , f_{\dim G}$ of $G$ we have that the
algebra $G/(f_1, \dots , f_{\dim G})$, which is
Artinian by  Lemma~\ref{lem!aboutGeneralLinearsAndDepth},
has the WLP. 
Good general references for the Weak and Strong
Lefschetz Properties are
\cite {HMetal,MiNa}. 
By \cite [Proposition 3.2] {HMetal}, if $F$ is an Artinian
standard graded $k$-algebra with the WLP it follows that
$\HF(F)$ is unimodal, which means that there is no triple
$j_1 < j_2 < j_3$ such that $\HF (j_1, F) > \HF (j_2, F)$
and $\HF (j_3, F) > \HF (j_2, F)$.

We say that an Artinian standard graded algebra $F= \oplus_{i=0}^r F_i$
with $F_r \not= 0$ 
has the \emph{Strong   Lefschetz Property} (SLP for short)
if $\dim F_i = \dim F_{r-i}$ for all $i$ with $0 \leq i \leq r$ and
for a general linear element $\omega$ of $F$ and all $i$ with
$ 0 \leq 2 i \leq r$, the multiplication by $\omega^{r-2i}$ map
$F_{i} \to F_{r-i}$ is bijective.  
We say that a standard graded $k$-algebra $G$ with
$\dim G \geq 1$ has the SLP if it is Cohen--Macaulay
and for general linear elements
$f_1, \dots , f_{\dim G}$ of $G$ we have that the
algebra $G/(f_1, \dots , f_{\dim G})$, which is
Artinian by  Lemma~\ref{lem!aboutGeneralLinearsAndDepth},
has the SLP.   If $J \subset R$ is an ideal, we say that $J$ has the WLP
(resp. the SLP) if $R/J$ has the WLP  (resp. the SLP).

For a function $h : \ZZ \to \ZZ$ we define 
\[
  \Delta (h) : \ZZ \to \ZZ,  \quad \quad     m  \mapsto  h(m)- h(m-1).
\]
For $q > 0$ we inductively define $\Delta^q (h): \ZZ \to \ZZ$ by 
$\Delta^1 (h) = \Delta (h)$ and $\Delta^q (h) = \Delta^{q-1}  (\Delta(h))$
for $q > 1$.
 Assume  $G$ is a standard graded $k$-algebra 
and $a_1, a_2, \dots ,a_p$ is a regular sequence in $G$ consisting
of linear elements, then $\HF ( G/ (a_1, \dots , a_p)) = \Delta^p  (\HF(G))$.

We also define
\[
  \Delta^{+} (h) : \ZZ \to \ZZ,  \quad \quad     m  \mapsto  \max (0, h(m)- h(m-1)).
\]
Assume $F$ is an standard graded Artinian $k$-algebra. Then $F$ has the WLP if and only if 
for general $\omega \in F_1$ we have  $\HF ( F/ (\omega) ) = \Delta^{+} (\HF(F))$.
  
Assume that $h : \ZZ \to \ZZ$ has the property that there exists 
$m_0 \in \ZZ$ such that  $h(m) = 0 $ when  $m < m_0$.
We define 
\[    
          \Gamma (h) : \ZZ \to \ZZ, \quad \quad   m \mapsto  \sum_{i=-\infty}^m h(i)
\]
For $q > 0$ we inductively define $\Gamma^q (h): \ZZ \to \ZZ$ by 
  $\Gamma^1 (h) = \Gamma (h)$ and
$ \Gamma^q (h)  = \Gamma^{q-1} (\Gamma(h))$.   Assume $G$ is a standard
graded $k$-algebra  and $T_1, \dots , T_p$ are new variables of degree $1$, then
 $ \HF ( G [T_1, \dots , T_p]) = \Gamma^p ( \HF(G))$.

For a graded  $k$-algebra $G$ we denote by $\depth G$ the
depth of $G$. By \cite [Theorem 1.2.8]{BH}
\begin{equation}    \label{eqn!depthwithExts}
  \depth G = \min  \{ i:  \Ext^i_G  (k,G) \not= 0 \},
\end{equation}
where $k$ is considered as a $G$-module via
$k = G/ (\oplus_{i \geq 1} G_i)$. 
For an ideal $I$ of a ring $R$ and $u\in R$ we write 
$(I:u)=\{r\in R\bigm|ru\in I\}$ for the ideal quotient.

Assume $A$ is a finite set. We set $2^{A}$ to be the
simplex with vertex set $A$, by definition it is the set of all subsets of
$A$. A simplicial subcomplex $D\subset2^{A}$ is a subset with the property
that if $\tau\in D$ and $\sigma\subset\tau$ then $\sigma\in D$. The elements
of $D$ are also called faces of $D$, and the dimension of a face $\tau$ of $D$
is one less than the cardinality of $\tau$.  A facet of $D$ is a maximal
face of $D$ with respect to (set-theoretic) inclusion.
The dimension of $D$ is the maximum
dimension of a facet of $D$. We define the support of $D$ by
\[
  \operatorname{supp}D=\{i\in A\bigm|\{i\}\in D\}\text{.}%
\]

We denote by $R_{A}$ the polynomial ring $k[x_{a}\bigm|a\in
A]$ with the degrees of all variables $x_{a}$ equal to $1$.
For a simplicial subcomplex $D\subset2^{A}$ we define the \emph{Stanley-Reisner
ideal} $I_{D,A}\subset R_{A}$ to be the ideal generated by the square-free
monomials $\; \prod_{t=1}^{p} x_{i_t} \;$ where $\{i_{1},i_{2}%
,\ldots,i_{p}\}$ is not a face of $D$. In particular, $I_{D,A}$ contains
linear polynomials if and only if $\operatorname{supp}D\not =A$. The 
\emph{Stanley-Reisner ring} $k[D,A]$ is defined by $k[D,A]=R_{A}/I_{D,A}$. 
 For a nonempty face $\sigma$ of
$D$ we set $x_{\sigma}=\prod_{i\in\sigma}x_{i}\in R_A$. 
For a nonempty finite set $A$, we set $\partial A=2^{A}%
\setminus\{A\}\subset2^{A}$ to be the boundary complex of the simplex $2^{A}$.
In the following, when the set $A$ is clear we will
simplify the notation $k[D,A]$ to $k[D]$.

Assume that, for $i=1,2$, $D_{i}\subset2^{A_{i}}$ is a subcomplex and the
finite sets $A_{1},A_{2}$ are disjoint. By the join $D_{1}\ast
D_{2}$ of $D_{1}$ and $D_{2}$ we mean the subcomplex $D_{1}\ast D_{2}%
\subset2^{A_{1}\cup A_{2}}$ defined by
\[
D_{1}\ast D_{2}=\{\alpha_{1}\cup\alpha_{2}\bigm|\alpha_{1}\in D_{1},\alpha
_{2}\in D_{2}\}\text{.}%
\]
If $\sigma$ is a face of $D\subset2^{A}$ we define the link of $\sigma$ in $D$ to
be the subcomplex
\[
\operatorname*{lk}\nolimits_{D}\sigma=\{\alpha\in D\bigm|\alpha\cap
\sigma=\emptyset\text{ and }\alpha\cup\sigma\in D\}\subset2^{A\setminus\sigma
}\text{.}%
\]
It is clear that the Stanley-Reisner ideal of $\operatorname*{lk}%
\nolimits_{D}\sigma$ is equal to the intersection of the ideal $(I_{D,A}%
:x_{\sigma})$ with the subring $R_{A\setminus\sigma}$ of $R_{A}$. In other
words, it is the ideal of $R_{A\setminus\sigma}$ generated by the minimal
monomial generating set of $(I_{D,A}:x_{\sigma})$. 
 Furthermore, we define the
star of $\sigma$ in $D$ to be the subcomplex%
\[
\operatorname*{star}\nolimits_{D}\sigma=\{\alpha\in D\bigm|\alpha\cup\sigma\in
D\}\subset2^{A}\text{.}%
\]

If $\sigma$ is a nonempty face of $D\subset2^{A}$ and $j \notin A$,
 we define the \emph{stellar subdivision} $D_{\sigma}$ 
with new vertex $j$ to be the subcomplex
\[
D_{\sigma}=\left(  D\setminus\operatorname*{star}\nolimits_{D}\sigma\right)
\cup\left(  2^{\left\{  j\right\}  }\ast\operatorname*{lk}\nolimits_{D}%
\sigma\ast\partial\sigma\right)  \subset2^{A\cup\{j\}}\text{.}%
\]

Following \cite[p.~67]{St1}, we say that a subcomplex $D\subset2^{A}$ is
\emph{Gorenstein* over $k$} if $A=\operatorname{supp}D$, $k[D]$ is Gorenstein, and
for every $i\in A$ there exists $\sigma\in D$ with $\sigma\cup\{i\}$ not a
face of $D$. The last condition combinatorially means that $D$ is not a join
of the form $2^{\{i\}}\ast E$, and algebraically that $x_{i}$ divides at
least one element of the minimal monomial generating set of $I_{D,A}$.

Assume $D \subset 2^{A}$ is a Gorenstein* simplicial complex and 
$\sigma$ is a face of $D$.  Set
$L = \operatorname*{lk}\nolimits_{D}\sigma$.
It is well known  (cf.~ \cite[Section II.5]{St1}) that the 
subcomplex  $L \subset  2^{\operatorname{supp}L}$ is Gorenstein*
with  $\dim L = \dim D - \dim \sigma - 1$.

\section {Some general lemmas}  \label{subst!generalLemmasNeeded}

In the present section we put together  a number of general lemmas
we need.  
Of particular interest  is the following  Lemma~\ref{lem!generalLemmaForWLP}. 
It states  that under certain conditions
a non-general Artinian reduction of a WLP $k$-algebra inherits
the WLP property, and  plays a key role in the following.

\begin {lemma}  \label{lem!generalLemmaForWLP}
  Assume $k $ is an infinite field, $R=k[x_1, \dots , x_n]$ is a polynomial ring
  with all variables of degree $1$  and $T$ is a new variable of degree $1$.
   Assume $d \geq 1$, $f_1, \dots , f_d \in R_1$ are $d$ general linear elements, 
   and $J \subset R[T]$ is  a homogeneous ideal with    $\dim R[T]/J = d $.
  Assume  $R[T]/J$ has the WLP and is  Cohen--Macaulay,  and that 
   $\dim R[T]/(J + (f_1, \dots , f_d)) = 0$.  Then  
   $R[T]/(J + (f_1, \dots , f_d))$ has the WLP.
\end {lemma}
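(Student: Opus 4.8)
The plan is to reduce everything modulo the single linear form $T-\lambda$, where $\lambda\in R_1$ is a general linear form of $R$, and to show that $T-\lambda$ is a weak Lefschetz element for the Artinian algebra $A:=R[T]/(J+(f_1,\dots,f_d))$. First note that, since $R[T]/J$ is Cohen--Macaulay of dimension $d$ and the hypothesis $\dim R[T]/(J+(f_1,\dots,f_d))=0$ says that $f_1,\dots,f_d$ is a system of parameters for it, the sequence $f_1,\dots,f_d$ is a regular sequence on $R[T]/J$, so $\HF(A)=\Delta^{d}(\HF(R[T]/J))$. Consider the $k$-algebra homomorphism $\phi_\lambda\colon R[T]\to R$ fixing $R$ and sending $T\mapsto\lambda$; its kernel is $(T-\lambda)$, so $R[T]/(T-\lambda)\cong R$ and $R[T]/(J+(T-\lambda))\cong R/J'$ where $J'=\phi_\lambda(J)$. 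Since each $f_i$ lies in $R$ and is therefore fixed by $\phi_\lambda$, this isomorphism identifies $R[T]/(J+(T-\lambda,f_1,\dots,f_j))$ with $R/(J'+(f_1,\dots,f_j))$ for every $j$.

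Next I argue that a general $T-\lambda$ behaves like a general linear form of $R[T]$. The relevant bad loci in $(R[T]_1)^{e}$ (for various $e$) — where a tuple fails to give an Artinian reduction of $R[T]/J$, or gives one without the WLP, or has some entry that is a zerodivisor on a successive quotient — are proper closed cones; and the affine subspace $T+R_1\subseteq R[T]_1$ is not contained in any proper closed cone of $R[T]_1$, because the cone over $T+R_1$ is Zariski dense in $R[T]_1$ (its complement lies in the hyperplane $R_1$). Combining this with the WLP of $R[T]/J$ and the standard fact that a general partial reduction of a Cohen--Macaulay algebra with the WLP again has the WLP (which uses that the projection of a dense open subset of a product has dense open fibres over a dense open base), we get that $R/J'\cong R[T]/(J+(T-\lambda))$ is Cohen--Macaulay of dimension $d-1$, has the WLP, and satisfies $\HF(R/J')=\Delta(\HF(R[T]/J))$ (the last because $T-\lambda$ is a nonzerodivisor on $R[T]/J$). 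Now set $E:=R/(J'+(f_1,\dots,f_{d-1}))$. Since $f_1,\dots,f_{d-1}$ are general linear forms of $R$ and general linear elements of $R/J'$ are precisely the images of general elements of $R_1$, the algebra $E$ is a general Artinian reduction of $R/J'$; hence $E$ is Artinian with the WLP and $\HF(E)=\Delta^{d-1}(\HF(R/J'))=\Delta^{d}(\HF(R[T]/J))=\HF(A)$.

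Finally, by the identification above, $E/(\bar f_d)\cong R/(J'+(f_1,\dots,f_d))\cong R[T]/(J+(f_1,\dots,f_d,T-\lambda))=A/(T-\lambda)A$, where $\bar f_d$ is the image of $f_d$ in $E_1$. As $f_d$ is general in $R_1$ and $R_1\to E_1$ is surjective, $\bar f_d$ is a general linear element of $E$, hence a weak Lefschetz element of $E$, so $\HF(A/(T-\lambda)A)=\HF(E/(\bar f_d))=\Delta^{+}(\HF(E))=\Delta^{+}(\HF(A))$. By the characterization recalled before the statement, this equality means exactly that multiplication by $T-\lambda$ on $A$ has maximal rank in every degree, and therefore $A$ has the WLP by \cite[Lemma 4.1]{BN}.

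The main obstacle is the genericity bookkeeping: all the openness statements we want to exploit are naturally formulated over $R[T]_1$ (they come from the WLP of $R[T]/J$), whereas our reduction only ever uses linear forms from the subspace $R_1$. The device that resolves this is the observation that the bad loci are proper closed cones and so cannot contain the affine subspace $T+R_1$; once one has reduced modulo a single such general form $T-\lambda$, the remaining reductions happen inside $R/J'$, where general linear elements are literally images of general elements of $R_1$, so no further genuine difficulty arises. One should also note that the already-general tuple $f_1,\dots,f_d$ in the hypothesis may be assumed to avoid the finitely many extra proper closed conditions imposed on $R_1^{d}$ along the way (regularity of $T-\lambda$ on the successive quotients for general $\lambda$, the tuple $(f_1,\dots,f_{d-1})$ lying in the good locus of $R/J'$, and $\bar f_d$ being a Lefschetz element of $E$), which is harmless since ``general'' only means avoiding some unspecified proper closed subset.
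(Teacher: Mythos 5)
Your proof is correct and, at its core, runs on the same mechanism as the paper's: the candidate Lefschetz element is $T$ plus a general element of $R_1$ (your $T-\lambda$ is the paper's $f_{d+1}+T$ up to sign), and the conclusion comes from the identity $\HF(A/(\omega))=\Delta^{+}(\Delta^{d}(\HF(R[T]/J)))=\Delta^{+}(\HF(A))$, which forces multiplication by $\omega$ to have maximal rank in every degree. Where you genuinely differ is in how genericity is transferred from $R[T]_1$ to the affine hyperplane $T+R_1$: the paper does this in one stroke, observing that $(f_1,\dots,f_d,f_{d+1}+T)$ generates the same ideal as $d+1$ general linear forms of $R[T]$, so the WLP of $R[T]/J$ applies directly to the full reduction; you instead cut first by $T-\lambda$ alone, justify that this single section is ``general enough'' by the cone argument (a proper closed, scale-invariant bad locus cannot contain $T+R_1$) combined with openness of the locus of linear forms $f$ with $f$ regular and $R[T]/(J+(f))$ having the WLP --- which is precisely the content of the paper's Lemma~\ref{lem!3openproperties} --- and then finish inside $R$, where $f_1,\dots,f_{d-1}$ give an honest general Artinian reduction of $R/J'$ and $\bar f_d$ an honest general test element. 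Your route is longer and leans on the openness of the WLP locus, which the paper's proof of this particular lemma avoids; in exchange it makes the hyperplane section $R[T]/(J+(T-\lambda))\cong R/J'$ and the identification $A/(T-\lambda)A\cong E/(\bar f_d)$ explicit, which is arguably more transparent. The quantifier bookkeeping you flag at the end is indeed harmless, and is cleanest if one notes that the genericity conditions on $\lambda$ involve only $J$ and not the $f_i$, so a good $\lambda$ can be fixed once and for all before imposing the (then well-defined) proper closed conditions on the tuple $(f_1,\dots,f_d)$.
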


 \begin {proof}   
   Let $f_1, \dots , f_{d+1} \in R_1$ be $d+1$ general linear elements.
   For simplicity we set $H=R[T]/(J + (f_1, \dots , f_d))$.
    Since $R[T]/J$ is Cohen--Macaulay of dimension $d$  and    $\dim H = 0$
   it follows that $f_1, \dots , f_d$ is a regular sequence for $R[T]/J$. Hence
   \[
     \HF (H) = \Delta^d (\HF (R[T]/J)). 
  \]
  Since (up to a nonzero constant)  $f_{d+1}+T$ is a general linear element of $H$, 
  $H$ has the WLP if and only if $\HF (H/(f_{d+1}+T)) = \Delta^{+}  (\HF (H) )$.
  Since $f_1, \dots ,  f_{d+1}$ are general linear elements of 
  $R$, it follows that the ideal   $(f_1, \dots , f_d, f_{d+1}+T)$ 
  of $R[T]$ is equal to the ideal  of $R[T]$ generated by $d+1$ general linear
  elements of $R[T]$.    Hence,  using the assumption that $R[T]/J$ has the WLP we get
  \begin {eqnarray*}
     \HF (H/(f_{d+1}+T)) &  = &
         \HF (  R[T]/(J + (f_1, \dots , f_d ,  f_{d+1}+T)))  \\
     &  = & \Delta^{+} (\Delta^d (\HF (R[T]/J)))   \\
     &  = &  \Delta^+ ( \HF(H)).
 \end {eqnarray*}
   As a consequence, $H$ has the WLP. 
\end {proof}

\begin {remark}
   The condition  $\dim R[T]/(J + (f_1, \dots , f_d)) = 0$ 
   in the statement of Lemma~\ref{lem!generalLemmaForWLP}
   does not follow from the other assumptions 
   since the $f_i$ do not involve the variable $T$. For example, if 
   $R= k[x_1]$ and  $J = (Tx_1) \subset  R [T]$
   then   the condition is not satisfied.
\end {remark}

\begin {remark}
   Is there a statement similar to Lemma~\ref{lem!generalLemmaForWLP} for the SLP ?
\end {remark}

The following lemma is the analogue for the WLP  of  \cite [Lemma 3.3]{Mu1} which is stated for the SLP
and can be proven by the same arguments.

\begin {lemma}  \label {lem!aboutWLPofginWiebe}
   (Wiebe) Assume   $R$ is a polynomial ring over an infinite field with all
 variables of degree $1$,  $\tau$ is a monomial order on $R$ and
 $J \subset R$ is a homogeneous ideal with $R/J$ Cohen--Macaulay.
 Denote by $in_{\tau}(J)$ the initial ideal  of $J$ with respect to $\tau$.
 We assume that $R/in_{\tau}(J)$  is Cohen--Macaulay and  has the WLP.
  Then $R/J$ has the WLP.  
\end {lemma}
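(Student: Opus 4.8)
The plan is to transfer the Weak Lefschetz Property along the flat degeneration to the initial ideal by a semicontinuity-of-rank argument, exactly as in the proof of Wiebe's lemma for the SLP (\cite[Lemma 3.3]{Mu1}), which I recall now. First I would reduce to the Artinian case: choose general linear elements $f_1, \dots, f_d \in R_1$ with $d = \dim R/J = \dim R/in_\tau(J)$. Using that both $R/J$ and $R/in_\tau(J)$ are Cohen--Macaulay, the $f_i$ form a regular sequence on each, so $\HF(R/(J + (f_1, \dots, f_d))) = \Delta^d(\HF(R/J)) = \Delta^d(\HF(R/in_\tau(J))) = \HF(R/(in_\tau(J) + (f_1, \dots, f_d)))$, the middle equality because passing to an initial ideal preserves the Hilbert function. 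The subtle point here, handled by the standard reference, is that a general choice of the $f_i$ is simultaneously a system of parameters modulo $J$ and (by genericity and upper semicontinuity) one may arrange that $in_\tau(J + (f_1,\dots,f_d)) = in_\tau(J) + (f_1,\dots,f_d)$ after a generic linear change of coordinates compatible with $\tau$, so that the Artinian reduction of $R/J$ degenerates to the Artinian reduction of $R/in_\tau(J)$; I would cite \cite[Lemma 3.3]{Mu1} for the bookkeeping rather than redo it.

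Next, with $F = R/(J + (f_1,\dots,f_d))$ and $F_0 = R/(in_\tau(J) + (f_1,\dots,f_d))$ Artinian standard graded algebras with the same Hilbert function, and $F_0$ having the WLP, I want to conclude $F$ has the WLP. By the characterization recalled in the excerpt, $F_0$ has the WLP iff there exists a linear form $\omega$ whose multiplication maps $(F_0)_i \to (F_0)_{i+1}$ all have maximal rank. Lift $\omega$ to a general linear form on $F$. The family of algebras $R[t]/(\text{homogenized ideal})$ over $\mathbb{A}^1$ is flat (the Hilbert function is constant, equal to that of both special and general fibre), and the rank of the multiplication-by-$\omega$ map in each degree is lower semicontinuous on the base: it can only drop on the special fibre. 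Since on the special fibre $F_0$ the rank is already maximal (equal to $\min(\dim (F_0)_i, \dim (F_0)_{i+1})$) and the Hilbert function is constant along the family, the rank on the general fibre must be at least, hence exactly, $\min(\dim F_i, \dim F_{i+1})$ in every degree. Therefore $F$ has the WLP, and hence $R/J$ has the WLP by definition.

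The main obstacle is the first paragraph: making precise that a general Artinian reduction of $R/J$ degenerates flatly to a general Artinian reduction of $R/in_\tau(J)$ — that is, that one can choose the linear forms generically so that cutting down and taking initial ideals commute. This requires combining genericity of the $f_i$ (so they are a regular sequence and a homogeneous system of parameters on both rings) with a generic coordinate change preserving the monomial order, and invoking that for a homogeneous system of parameters in generic coordinates the initial ideal of the sum is the sum of the initial ideals. All of this is already contained in the proof of \cite[Lemma 3.3]{Mu1}, which is phrased for the SLP but uses only that "maximal rank is preserved under flat specialization" — a property equally valid for the single multiplication map $\times\omega$ relevant to WLP and for the powers $\times\omega^{r-2i}$ relevant to SLP. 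So the proof is genuinely "the same arguments as \cite[Lemma 3.3]{Mu1}", and I would write it as such, only indicating the two places where "WLP" replaces "SLP": the semicontinuity statement applied to $F_i \to F_{i+1}$ rather than $F_i \to F_{r-i}$, and the final invocation of the WLP characterization in place of the SLP one.
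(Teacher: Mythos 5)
Your proposal is correct and takes essentially the same route as the paper, which gives no independent argument but simply asserts that the SLP proof of \cite[Lemma 3.3]{Mu1} (Gr\"obner degeneration to the initial ideal, constancy of Hilbert functions via Cohen--Macaulayness of both rings, and lower semicontinuity of the rank of multiplication by a linear form along the flat family) carries over verbatim with WLP in place of SLP. The only caveat is your auxiliary claim that $\mathrm{in}_{\tau}(J+(f_1,\dots,f_d))=\mathrm{in}_{\tau}(J)+(f_1,\dots,f_d)$ after a ``generic linear change of coordinates compatible with $\tau$'': as stated this is imprecise (a generic coordinate change replaces $\mathrm{in}_{\tau}(J)$ by the generic initial ideal, which is not the ring in the hypothesis) and it is also unnecessary, since the degeneration-plus-semicontinuity argument of your second paragraph, applied to the Gr\"obner family of $J$ itself cut by fixed general linear forms, already yields the conclusion.
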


For a proof of the following lemma  see  \cite[Proposition~1.5.12] {BH}.  
\begin {lemma}   \label{lem!aboutGeneralLinearsAndDepth}
 Assume $k$ is an infinite field, $R = k[x_1, \dots , x_n]$ with all
variables of degree $1$,  $f_1, \dots , f_t \in R_1$ are
$t$ general linear elements of $R$ 
and $J \subset R$ is a homogeneous ideal.
If $t \leq \depth R/J$  then
$f_1, \dots , f_t$ is an $R/J$-regular sequence.
\end {lemma}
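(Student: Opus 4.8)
The plan is to argue by induction on $t$, reducing the whole statement to the single fact that a \emph{general} linear form is a nonzerodivisor on a finitely generated graded module of positive depth. We may assume $R/J\not=0$, the case $J=R$ being vacuous.

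For the base case $t=1$: since $\depth R/J\geq 1$, formula~\eqref{eqn!depthwithExts} gives $\Hom_R(k,R/J)=\Ext^0_R(k,R/J)=0$, so the homogeneous maximal ideal $\mathfrak{m}=(x_1,\dots,x_n)$ is not an associated prime of $R/J$. Because $R/J$ is graded, every associated prime of $R/J$ is homogeneous, and by the previous sentence each such prime $\mathfrak{p}$ is strictly contained in $\mathfrak{m}$; hence $\mathfrak{p}\cap R_1$ is a proper $k$-subspace of $R_1$. As $\operatorname{Ass}(R/J)$ is finite and $k$ is infinite, $R_1$ is not the union of the finitely many proper subspaces $\mathfrak{p}\cap R_1$ with $\mathfrak{p}\in\operatorname{Ass}(R/J)$, so a general $f_1\in R_1$ avoids the set of zerodivisors on $R/J$ and is therefore $R/J$-regular. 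For the inductive step I would take $t\geq 2$ and assume the statement for $t-1$: a general $f_1\in R_1$ is $R/J$-regular by the base case (note $\depth R/J\geq t\geq 1$), and then $\depth R/(J+(f_1))=\depth R/J-1\geq t-1$; applying the inductive hypothesis to $J+(f_1)$, a general choice of $f_2,\dots,f_t\in R_1$ is an $R/(J+(f_1))$-regular sequence, which together with the regularity of $f_1$ says exactly that $f_1,\dots,f_t$ is an $R/J$-regular sequence.

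The one point requiring care — and the main obstacle — is the interpretation of ``general'' across the induction, since the genericity demanded of $f_2,\dots,f_t$ in the inductive step depends on the prior choice of $f_1$. To make this rigorous I would pass to the parameter space $(R_1)^t\cong\mathbb{A}^{nt}_k$ and show that the locus $B_t$ of tuples $(f_1,\dots,f_t)$ failing to be an $R/J$-regular sequence lies inside a proper Zariski-closed subset. Writing $Z(M)\subset R_1$ for the (closed, being a finite union of linear subspaces) set of zerodivisors on a finitely generated graded module $M$, one has a containment of $B_t$ into the union of $Z(R/J)\times\mathbb{A}^{n(t-1)}_k$ with the set of tuples for which $f_1\notin Z(R/J)$ but $(f_2,\dots,f_t)$ fails to be regular on $R/(J+(f_1))$; the first piece is proper closed by the base case, and the second is contained in a proper closed set by the inductive hypothesis together with the constructible variation of the relevant bad loci of $R/(J+(f_1))$ over the open set where $f_1$ is regular. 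With this bookkeeping in place the induction goes through verbatim. (Alternatively, since this is a standard fact, one may simply invoke \cite[Proposition~1.5.12]{BH}, where precisely this argument is carried out.)
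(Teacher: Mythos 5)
Your argument is correct in substance, but note that the paper does not prove this lemma at all: it simply cites \cite[Proposition~1.5.12]{BH}, which is exactly the fallback you mention in your last sentence. What you have written is essentially the standard proof of that cited result, so the comparison is between a citation and a reconstruction. Your base case is fully rigorous: $\depth R/J\geq 1$ together with \eqref{eqn!depthwithExts} gives $\Hom(k,R/J)=0$, so the homogeneous maximal ideal is not associated to $R/J$, and since the finitely many associated primes are homogeneous and proper, their degree-one pieces are proper subspaces of $R_1$, which an infinite field cannot cover; the inductive step (depth drops by exactly one modulo a regular linear form, then induct on $J+(f_1)$) is likewise fine. The one genuinely delicate point is the one you flag yourself: making ``general'' uniform across the induction, i.e.\ showing that the locus of tuples $(f_1,\dots,f_t)\in (R_1)^t$ failing to be a regular sequence sits inside a proper closed (or at least nowhere dense constructible) subset. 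Your sketch via ``constructible variation of the bad loci of $R/(J+(f_1))$'' is the right idea but is asserted rather than proved, and proving it cleanly (e.g.\ via Koszul homology in bounded degrees, or via a fibrewise dimension count on the incidence variety of pairs (associated prime, linear form)) is precisely the content that \cite[Proposition~1.5.12]{BH} packages. So: as a self-contained argument it has one acknowledged soft spot; as written, with the final appeal to \cite{BH}, it is complete and agrees with the paper's treatment, while having the merit of exhibiting the prime-avoidance mechanism that the paper leaves implicit.
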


\begin{lemma}   \label{lem!wlpinthegorensteincase}
Assume   $k$ is an infinite field and  $F = \oplus_{i=0}^{d} F_i$
is an Artinian standard graded Gorenstein $k$-algebra with $F_d \not= 0$.
If $d$ is even we set $p_1 =  d/2 - 1, p_2 = d/2$, if $d$ is odd we 
set  $p_1 = p_2 = (d-1)/2$.
Denote by $\omega \in F_1$ a general linear element. Then the following are
equivalent.

i)  $F$ has the WLP. 

ii) The multiplication by $\omega$
map $F_{p_1} \to F_{p_1 + 1}$ is
injective  

iii)  The multiplication by $\omega$
map  $F_{p_2} \to F_{p_2 + 1}$ is
surjective.
\end{lemma}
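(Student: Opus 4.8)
## Proof Plan

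The plan is to exploit the symmetry of the Hilbert function of an Artinian Gorenstein algebra together with the standard fact that, for WLP, it suffices to check that multiplication by a general linear form has maximal rank in every degree, and that a Gorenstein algebra has at most one "critical" degree where injectivity can fail and at most one where surjectivity can fail. Recall that since $F$ is Artinian Gorenstein with socle degree $d$, we have $\dim_k F_i = \dim_k F_{d-i}$ for all $i$, so $\HF(F)$ is symmetric about $d/2$. In particular $\HF(F)$ is non-decreasing on $[0,p_1+1]$ in the sense that needs checking only near the middle, and the multiplication maps $\cdot\omega : F_i \to F_{i+1}$ for $i < p_1$ are forced to be injective once the middle one is (this is where the Gorenstein pairing $F_i \times F_{d-i} \to F_d \cong k$ enters), and dually the maps for $i > p_2$ are forced to be surjective once the middle one is.

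First I would establish the implication (i) $\Rightarrow$ (ii) and (i) $\Rightarrow$ (iii), which are immediate from the definition of WLP: if every multiplication-by-$\omega$ map has maximal rank, then in particular $F_{p_1} \to F_{p_1+1}$ has maximal rank, and by symmetry $\dim F_{p_1} \le \dim F_{p_1+1}$ (since $\dim F_{p_1+1} = \dim F_{d-p_1-1}$ and $d - p_1 - 1 \ge p_1$ is the "first half"), so maximal rank forces injectivity; similarly $\dim F_{p_2} \ge \dim F_{p_2+1}$ forces the map at $p_2$ to be surjective. Next, for (ii) $\Rightarrow$ (i), I would use the Gorenstein duality: the bilinear form $F_i \times F_{d-i} \to F_d \cong k$ is perfect, and under this pairing the transpose of $\cdot\omega: F_i \to F_{i+1}$ is $\cdot\omega: F_{d-i-1} \to F_{d-i}$. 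Hence injectivity of $\cdot\omega$ in degree $i$ is equivalent to surjectivity of $\cdot\omega$ in degree $d-i-1$. So it suffices to prove that $\cdot\omega: F_i \to F_{i+1}$ is injective for all $i \le p_1$; given injectivity at $i = p_1$, injectivity at all smaller $i$ follows because an injection $F_{p_1}\hookrightarrow F_{p_1+1}$ composed appropriately, together with the chain $F_i \xrightarrow{\omega} F_{i+1} \xrightarrow{\omega^{p_1 - i}} F_{p_1}$ not being zero — more carefully, one shows $\cdot\omega^{j}$ injective on $F_i$ whenever $i + j \le p_1$ by a downward induction, using that if $\cdot\omega:F_i\to F_{i+1}$ had a kernel then so would $\cdot\omega:F_{p_1}\to F_{p_1+1}$ via the standard "general linear form" argument (the kernel in the lowest bad degree would survive). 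The cleanest route here is to cite that for a general $\omega$ the ranks behave as generically as the Hilbert function allows up to the first obstruction, so the only possible failure of injectivity in the first half is at degree $p_1$, and the only possible failure of surjectivity in the second half is at degree $p_2$; then (ii) kills the former and, by duality, its mirror kills the latter, giving WLP. The implication (iii) $\Rightarrow$ (i) is then entirely dual, or follows by applying (ii) $\Rightarrow$ (i) after noting (iii) is equivalent to (ii) via the perfect pairing.

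The main obstacle I expect is the precise justification that, for a \emph{general} linear form $\omega$, the failure of maximal rank can occur at only one degree in the first half (injectivity) and one in the second half (surjectivity) — i.e., that there are no "hidden" obstructions in lower degrees. This requires the semicontinuity/genericity argument: the locus where $\cdot\omega: F_i \to F_{i+1}$ drops rank is a proper closed subset of $\mathbb{P}(F_1)$ once we know \emph{some} linear form achieves maximal rank there, and one must argue that maximal rank in the middle degree propagates. The way around this is to invoke the Gorenstein pairing to reduce all surjectivity statements to injectivity statements, and then prove the single clean claim: if $\cdot\omega:F_{p_1}\to F_{p_1+1}$ is injective for general $\omega$, then $\cdot\omega:F_i\to F_{i+1}$ is injective for general $\omega$ and all $i\le p_1$. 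For this one notes that if $0 \ne v \in F_i$ with $\omega v = 0$ for general $\omega$, pick a general $\omega' $; then $\omega'^{\,p_1 - i} v \in F_{p_1}$, and genericity of $\omega$ plus the fact that multiplication by a general form is injective on the ideal it would otherwise not be (Gorenstein duality again: $F_i \to F_{p_1}$ by $\omega'^{p_1-i}$ has maximal rank, which is injective since $i \le p_1$ lies in the first half) shows $\omega'^{p_1-i}v \ne 0$, yet $\omega(\omega'^{p_1-i}v) = \omega'^{p_1-i}(\omega v) = 0$, contradicting injectivity at $p_1$ for the general form $\omega$ (after a standard specialization argument identifying generic behavior for the pair $(\omega,\omega')$). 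Assembling these pieces yields the equivalence of (i), (ii), (iii).
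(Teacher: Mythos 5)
Your overall skeleton is the right one, and it is essentially the argument behind the result the paper itself merely cites (the paper's ``proof'' is the one-line reference to \cite[Remark 2.4]{MZ}): use the Gorenstein pairing to make $\cdot\omega\colon F_i\to F_{i+1}$ dual to $\cdot\omega\colon F_{d-i-1}\to F_{d-i}$, so that injectivity in degree $p_1$ is equivalent to surjectivity in degree $p_2$ (indeed $d-p_1-1=p_2$ and $d-p_2-1=p_1$ in both parities), and reduce everything to injectivity in degrees $\le p_1$. But the two places where you actually ground the argument have genuine gaps, both stemming from treating the Hilbert function as if it were non-decreasing on the ``first half''. Symmetry alone gives no such thing: Gorenstein $h$-vectors need not be unimodal (Stanley's $(1,13,12,13,1)$ is the classical example), so your justification of (i)$\Rightarrow$(ii) -- ``$\dim F_{p_1}\le\dim F_{p_1+1}$ by symmetry'' -- is unfounded for $d$ even; you would either have to invoke unimodality as a consequence of WLP (\cite[Proposition 3.2]{HMetal}), or argue directly: if the general $\omega$ is surjective in degree $p_1$, duality gives injectivity in degree $p_2\ge p_1$, which passes down to $p_1$ -- but that downward passage is precisely the step discussed next.

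More seriously, your propagation of injectivity from degree $p_1$ down to degree $i<p_1$ is not proved. You take $0\ne v\in F_i$ with $\omega v=0$ and want $\omega'^{\,p_1-i}v\ne 0$ for a general $\omega'$; your justification appeals to maximal rank of multiplication by a \emph{power} of a general linear form, which is an SLP-type statement not available here, and even granting maximal rank, injectivity of $F_i\to F_{p_1}$ would again require the unimodality you do not have; moreover powers of linear forms need not span $F_{p_1-i}$ in small positive characteristic, whereas the lemma is over an arbitrary infinite field. The fix is simpler than what you attempt and needs no second general form and no genericity-of-pairs argument: by the perfect pairing (exactly as in Remark~\ref{rem!poincare_duality_for_gor}), for $0\ne v\in F_i$ there exists some $e'\in F_{p_1-i}$ -- an arbitrary element, not a power of a linear form -- with $ve'\ne 0$ in $F_{p_1}$; then $\omega(ve')=e'(\omega v)=0$ contradicts injectivity in degree $p_1$. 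Equivalently, dualize injectivity at $p_1$ to surjectivity at $p_2$ and use that surjectivity of $\cdot\omega$ propagates upward in a standard graded algebra (since $F_{j+1}=F_1F_j$), then dualize back. With that replacement, and the standard bookkeeping that one linear form of maximal rank in every degree suffices (\cite[Lemma 4.1]{BN}), your outline closes up and gives the equivalence of (i), (ii), (iii).
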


\begin {proof}  It follows by \cite [Remark 2.4] {MZ}
\end {proof}

\begin {lemma}        \label {lem!3openproperties}
 Assume $k$ is an  infinite field,
 $R= k[x_1, \dots, x_n]$ with  all variables of degree $1$.
Assume $J \subset R$ is a homogeneous ideal such that $R/J$ is Cohen--Macaulay
and  $g_1, g_2 \in R$ are two nonzero linear
elements.   We define
\[
   {\mathcal {S}}_1 = \{  c \in  k :  g_1 - c g_2  \notin J  \}  \subset \Aff^1,
\]
\[
   {\mathcal {S}}_2 = \{  c \in {\mathcal {S}}_1  :  g_1 - c g_2  \; \text{ is }   R/J   \text{-regular} 
               \}  \subset \Aff^1
\] 
and
\[
   {\mathcal {S}}_3 = \{  c \in {\mathcal {S}}_2  : R/(J+(g_1 - c g_2))  \; \text{ has the WLP }                
        \}  \subset \Aff^1.
\] 
Then, for all $1 \leq i \leq 3$, the subset  ${\mathcal{S}}_i \subset \Aff^1$
is  Zariski open (but perhaps empty).
\end {lemma}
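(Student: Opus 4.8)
The strategy is to handle the three subsets in order, each time reducing "Zariski open" to "complement is Zariski closed" and exhibiting the closed complement as the vanishing locus of finitely many polynomials in the parameter $c$. Throughout, fix a degree $m$ and think of multiplication-type maps between the finite-dimensional $k$-vector spaces $(R/J)_m$, whose dimensions are independent of $c$.

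First, for $\mathcal{S}_1$: the condition $g_1 - c g_2 \in J$ is equivalent to $g_1 - c g_2$ lying in the one-dimensional (or zero) space $(J \cap R_1)$, hence to a system of linear equations in $c$; thus the complement $\Aff^1 \setminus \mathcal{S}_1$ is either all of $\Aff^1$, a single point, or empty, and in every case closed. Second, for $\mathcal{S}_2$: since $R/J$ is Cohen--Macaulay, a nonzero linear form $\ell$ is $R/J$-regular if and only if $\dim R/(J+(\ell)) = \dim R/J - 1$, equivalently $\mathrm{depth}\, R/(J+(\ell)) = \dim R/J - 1$; and by the standard semicontinuity of Hilbert functions in a flat family, the function $c \mapsto \HF(R/(J+(g_1 - c g_2)), m)$ is upper semicontinuous for each $m$. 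The regular locus inside $\mathcal{S}_1$ is exactly where this Hilbert function attains its generic (minimal) value in every degree, which is the intersection over $m$ of finitely many (since the Hilbert function stabilizes) open conditions; hence $\mathcal{S}_2$ is open in $\mathcal{S}_1$, and being open in an open set, open in $\Aff^1$.

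Third, for $\mathcal{S}_3$: over $c \in \mathcal{S}_2$ the Artinian quotient $A_c := R/(J+(g_1 - c g_2))$ has Hilbert function independent of $c$, say $\oplus_{i=0}^r (A_c)_i$ with fixed dimensions $h_i$. By the characterization recalled in the excerpt (the line "$F$ has the WLP if and only if for general $\omega \in F_1$ we have $\HF(F/(\omega)) = \Delta^{+}(\HF(F))$"), $A_c$ has the WLP iff for a general linear $\omega$ the maps $(A_c)_i \to (A_c)_{i+1}$ all have maximal rank; equivalently, picking coordinates, the generic rank of the multiplication matrix $M_i(c, \omega)$ — whose entries are polynomials in $c$ and in the coefficients of $\omega$ — equals $\min(h_i, h_{i+1})$ for all $i$. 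For each $i$ this maximal-rank condition is the non-vanishing of at least one maximal minor, hence an open condition on the pair $(c,\omega)$; intersecting over the finitely many relevant $i$ gives an open set $U \subset \mathcal{S}_2 \times \Aff(F_1)$. Then $\mathcal{S}_3 = \{c \in \mathcal{S}_2 : (\{c\}\times \Aff(F_1)) \cap U \neq \emptyset\}$ is the image of $U$ under the projection to the first factor; since the projection is an open map, $\mathcal{S}_3$ is Zariski open.

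The main obstacle is making the "constant Hilbert function over $\mathcal{S}_2$" and the semicontinuity statements precise, i.e. setting up the family $R/(J+(g_1 - c g_2))$ over $\Aff^1$ correctly so that generic flatness and upper semicontinuity of fiber dimensions of graded pieces apply; once that scaffolding is in place, each of the three openness claims is the non-vanishing of a suitable minor, which is routine. One should also note that $\mathcal{S}_3 \neq \emptyset$ would additionally require $R/J$ itself to have the WLP (and that is exactly why the lemma allows the sets to be empty), but emptiness poses no problem for the openness assertion.
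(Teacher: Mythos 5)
Your argument for $\mathcal{S}_3$ has a genuine gap: you treat $A_c = R/(J+(g_1-cg_2))$ as Artinian and characterize its WLP by maximal rank of the multiplication maps on $A_c$ itself. But the lemma puts no restriction on $\dim R/J$, and in the paper's actual application (the proof of Lemma~\ref{lem!structureOfG}, where $R/J=\mathcal{Q}$ has dimension $d+1$) the quotient $R/(J+(g_1-cg_2))$ has positive dimension. By the paper's definitions, ``has the WLP'' then means: Cohen--Macaulay \emph{and} a general Artinian reduction by $\dim-1$ further general linear forms has the WLP. So your parameter space must include those extra linear forms as well as $\omega$, and after projecting you need the nontrivial (though standard) step that the existence of \emph{one} Artinian reduction with the expected Hilbert function $\Delta^{\dim-1}(\Delta\HF(R/J))$ and a Lefschetz element forces the \emph{general} reduction to have the WLP. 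This is exactly the content of \cite[Lemma 4.1]{BN}; the paper's proof sidesteps the whole issue by observing that $\mathcal{S}_i$ is the preimage, under the morphism $\Aff^1\to R_1$, $c\mapsto g_1-cg_2$, of the corresponding locus $\mathcal{W}_i$ in the full space of linear forms, and then quoting that $\mathcal{W}_3$ (the locus of regular linear forms $f$ with $R/(J+(f))$ WLP) is open as a well-known fact. As written, your proof only covers the case $\dim R/J=1$, which is not the case needed.

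There is also a weaker gap in your $\mathcal{S}_2$ step: regularity of $g_1-cg_2$ is equivalent to $\HF(R/(J+(g_1-cg_2)))=\Delta(\HF(R/J))$ in \emph{all} degrees, and an infinite intersection of Zariski-open subsets of $\Aff^1$ need not be open; your reduction to finitely many degrees ``since the Hilbert function stabilizes'' is not valid once $\dim R/(J+(g_1-cg_2))\geq 1$ (the Hilbert function does not stabilize, and one would anyway need a bound uniform in $c$). This can be repaired, e.g.\ by bounding the degree of a nonzero element of $(0:_{R/J}\ell)$ using witnesses $u$ with $(J:u)=P$ for the finitely many associated primes $P$, but the paper's argument is both shorter and does not even use the Cohen--Macaulay hypothesis here: a linear form is a zerodivisor on $R/J$ iff it lies in one of the finitely many associated primes, so the bad locus is a finite union of linear subspaces of $R_1$, hence closed. (A small inaccuracy, harmless for your $\mathcal{S}_1$ argument: $J\cap R_1$ need not be of dimension at most one; what matters is only that membership in it imposes linear conditions on $c$.)
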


\begin {proof}  Denote by $\mathcal{B}$ the finite dimensional vector space $R_1$
considered as an affine variety.   Consider the morphism 
$\phi : \Aff^1 \to \mathcal{B}$, $c \mapsto  g_1 - cg_2$.
 It is clear that the image of $\phi$ is an affine subspace  of
 $\mathcal{B}$, hence is Zariski closed. As a consequence, it is enough to prove that,
for $1 \leq i \leq 3$, the three subsets 
\[
   {\mathcal {W}}_1 = \{  f  \in \mathcal {B}  :  f   \notin J  \}  \subset \mathcal {B},
\]
\[
    {\mathcal {W}}_2 = \{  f \in \mathcal {B}  :  f   \text { is } R/J \text{-regular}  \}
\]
 and
\[
    {\mathcal {W}}_3 = \{  f \in \mathcal {B}  :  f   \text { is } R/J\text{-regular and }  
                   R/(J+(f)) \text{ has the WLP}     \}  \subset \mathcal {B}
\]
are Zariski open.   For    ${\mathcal {W}}_1$ it is obvious. For ${\mathcal {W}}_2$ it follows
by the fact that the set of $R/J$-zero divisors is the union of the  elements of the finite set consisting
of the prime ideals associated to $J \subset R$.   The case of   ${\mathcal {W}}_3$ is also
well-known, see for example \cite[Lemma 4.1] {BN}.
\end {proof}

\begin {lemma}  \label {lem!statemnetaboutepowers}
Assume $e  \geq 1$ is an integer and  $k$ is a field of characteristic $0$
or of prime characteristic $> e$.    Consider the polynomial
ring $R = k[x_1, \dots , x_n]$ with all variables of degree $1$ and assume 
$V \subset R$ is a $k$-vector subspace.  If $a^e \in V$ for all $a \in R_1$  
 then it follows that $R_e \subset V$.
\end {lemma}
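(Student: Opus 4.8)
The plan is to exploit a polarization (or ``umbral'') argument: from the hypothesis that the single power map $a \mapsto a^e$ lands in $V$ for every $a \in R_1$, one extracts, by taking suitable linear combinations of such powers, that all monomials of degree $e$ in the variables $x_1, \dots, x_n$ lie in $V$, which gives $R_e \subset V$ since those monomials span $R_e$. Concretely, for a fixed monomial $x_1^{c_1} \cdots x_n^{c_n}$ with $c_1 + \cdots + c_n = e$, I would consider the polynomial identity
\[
  (t_1 x_1 + \cdots + t_n x_n)^e = \sum_{c_1 + \cdots + c_n = e} \binom{e}{c_1, \dots, c_n} t_1^{c_1} \cdots t_n^{c_n}\, x_1^{c_1} \cdots x_n^{c_n}
\]
in the $t_i$, viewed as a polynomial in $t_1, \dots, t_n$ with coefficients in $R_e$. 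For every choice of scalars $t_i \in k$ the left-hand side is of the form $a^e$ with $a = \sum t_i x_i \in R_1$, hence lies in $V$ by hypothesis.

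The first step is therefore to show that the $k$-span of the elements $(t_1 x_1 + \cdots + t_n x_n)^e$, as $(t_1, \dots, t_n)$ ranges over $k^n$, equals the $k$-span of the monomials $\{ x_1^{c_1} \cdots x_n^{c_n} : \sum c_i = e \}$. Since $V$ is a $k$-vector subspace containing all of the former, this will immediately give $R_e \subset V$. Because $k$ is infinite, a polynomial in $k[t_1, \dots, t_n]$ that vanishes at all $k$-points is the zero polynomial; applying this coordinatewise (i.e.\ to each coordinate of the vector-valued function $k^n \to R_e$, $t \mapsto (t_1 x_1 + \cdots + t_n x_n)^e$, after choosing a basis of $R_e$) reduces the spanning claim to the statement that the monomials $t_1^{c_1} \cdots t_n^{c_n}$ with $\sum c_i = e$ are linearly independent in $k[t_1,\dots,t_n]$, which is clear. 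This is where the characteristic hypothesis enters: the multinomial coefficient $\binom{e}{c_1, \dots, c_n}$ must be nonzero in $k$ for every $(c_1, \dots, c_n)$ with $\sum c_i = e$, and this holds precisely because $e!$ is invertible in $k$ (equivalently, $\operatorname{char} k = 0$ or $\operatorname{char} k > e$), so each monomial $x_1^{c_1} \cdots x_n^{c_n}$ actually appears with an invertible coefficient and can be recovered.

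I expect the only real subtlety to be a clean formulation of the ``vanishing polynomial over an infinite field'' step so that it yields membership in the subspace $V$ rather than merely an abstract linear dependence: one should phrase it as saying that $V \supseteq \operatorname{span}_k\{(t\cdot x)^e : t \in k^n\}$ and that the latter span is all of $R_e$, both of which are elementary once the multinomial coefficients are known to be units. An alternative, perhaps more self-contained, route avoiding multivariable polynomial identities is to induct on $e$ and on $n$: fixing all but one variable and using finitely many values of the remaining parameter together with a Vandermonde argument to isolate each power of $x_n$, then invoking the inductive hypothesis on the coefficients. Either way the characteristic bound $> e$ is exactly what is needed and is used in the same place. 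I would present the polarization argument as the main proof, as it is shortest.
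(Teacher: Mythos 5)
Your proposal is correct, and it proves exactly the statement that the paper disposes of in one line: the paper's entire proof is a citation to Kraft--Procesi (Section 3.2, Exercise 2) for the fact that the $k$-linear span of $\{a^e : a \in R_1\}$ is all of $R_e$, after which $R_e \subset V$ is immediate. Your polarization argument via the multinomial expansion of $(t_1x_1+\cdots+t_nx_n)^e$, with the observation that each multinomial coefficient is a unit because $e!$ is invertible when $\operatorname{char} k = 0$ or $\operatorname{char} k > e$, is the standard proof lying behind that citation, so mathematically the two routes coincide; yours simply makes the argument self-contained. One small caveat: your main argument invokes the fact that a polynomial vanishing on all of $k^n$ is zero because ``$k$ is infinite,'' but infiniteness is not among the hypotheses of the lemma as stated (prime characteristic $>e$ allows finite fields such as $\mathbb{F}_p$ with $p>e$). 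This is harmless in the paper, where $k$ is globally assumed infinite, and it is also easily repaired in general: the relevant polynomial has degree at most $e$ in each variable and $e < p \leq |k|$, so it cannot vanish identically on $k^n$ unless it is zero, or alternatively your Vandermonde variant needs only $e+1$ distinct scalars, which exist under the characteristic hypothesis.
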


\begin {proof}
According to \cite [Section 3.2, Exercise 2]{KP},  the linear span of the  set 
$\{ a^e : a \in R_1 \}$  is equal to  $R_e$.   The result follows.  \end {proof}

\begin{lemma}   \label{lem!aboutGorensteinstar}
   Assume  $D$ is  a Gorenstein*  simplicial complex. Denote by
   $k[D]$ the Stanley--Reisner ring of $D$
   over an infinite field  $k$ and  by 
   $F$ a general Artinian reduction of $k[D]$. We have
   \[
       F = \oplus_{i=0}^{\dim k[D]} F_i
   \]
   and  $F_{\dim k[D]}$ is $1$-dimensional. 
\end {lemma}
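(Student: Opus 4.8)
The plan is to use the standard correspondence between Gorenstein* complexes and Gorenstein Stanley--Reisner rings, together with the behaviour of the Hilbert function under passing to a general Artinian reduction. First I would recall that since $D$ is Gorenstein* over $k$, the Stanley--Reisner ring $k[D]$ is Gorenstein and, in particular, Cohen--Macaulay. Let $d = \dim k[D]$. By Lemma~\ref{lem!aboutGeneralLinearsAndDepth}, $d$ general linear elements $f_1, \dots, f_d$ of $R_A$ form a $k[D]$-regular sequence (here $d = \depth k[D]$ because $k[D]$ is Cohen--Macaulay of dimension $d$), so $F = k[D]/(f_1, \dots, f_d)$ is Artinian; in particular the grading of $F$ is bounded, say $F = \oplus_{i=0}^{r} F_i$ with $F_r \neq 0$, which already gives a decomposition of the stated shape once we identify $r$ with $d$.

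Next I would transfer the Gorenstein property from $k[D]$ to $F$. Since $f_1, \dots, f_d$ is a regular sequence on the Gorenstein ring $k[D]$, the quotient $F$ is again Gorenstein (a regular sequence modulo a Gorenstein ring yields a Gorenstein quotient), and it is now Artinian and standard graded. For an Artinian standard graded Gorenstein $k$-algebra $F = \oplus_{i=0}^{r} F_i$ with $F_r \neq 0$, the socle is concentrated in degree $r$ and is one-dimensional, and moreover the Hilbert function is symmetric, $\dim_k F_i = \dim_k F_{r-i}$; in particular $\dim_k F_r = \dim_k F_0 = 1$. This gives $F_{\dim k[D]}$ one-dimensional provided I check that $r = d = \dim k[D]$.

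To pin down $r = \dim k[D]$, I would compute Hilbert functions: since $f_1, \dots, f_d$ is a regular sequence of linear forms, $\HF(F) = \Delta^d(\HF(k[D]))$, as recorded in the excerpt. The Hilbert series of the $d$-dimensional Cohen--Macaulay ring $k[D]$ has the form $h(t)/(1-t)^d$ with $h$ a polynomial with $h(0) = 1$ and $h$ not identically determined past its top degree; applying $\Delta^d$ clears the denominator, so $\HF(F)$ is exactly the coefficient sequence of $h(t)$, whose top-degree coefficient is nonzero. Equivalently, the socle degree $r$ of $F$ equals $\deg h(t)$, and by Gorenstein symmetry of $h$ together with $h(0)=1$ this top coefficient is $1$. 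Since $k[D]$ is $d$-dimensional, $r$ is finite and the passage to a general reduction does not change the fact that the top nonzero degree is well-defined; I would simply define $\dim k[D]$ in the statement to mean this Krull dimension $d$ and note $r$ is the socle degree, which is what the indexing in the lemma refers to (this is the only place where a careful reading of conventions matters, but it is routine).

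The only genuine content is the implication ``$k[D]$ Gorenstein $\Rightarrow$ general Artinian reduction $F$ is Gorenstein with one-dimensional socle in top degree,'' and this is entirely standard: it combines the stability of the Gorenstein property under quotient by a regular sequence with the structure theory of Artinian Gorenstein graded algebras (one-dimensional socle concentrated in a single degree). I do not anticipate a real obstacle; the main care is bookkeeping of the grading index so that ``$\dim k[D]$'' in the statement is correctly interpreted as the socle degree of $F$, which equals the Krull dimension of $k[D]$ precisely because the $g$-vector/$h$-vector of a Gorenstein* complex has the symmetry forcing its top entry to be $1$. I would therefore organize the write-up as: (1) $k[D]$ Gorenstein and Cohen--Macaulay of dimension $d$; (2) general linear $f_1,\dots,f_d$ are a regular sequence, $F$ Artinian; (3) $F$ Gorenstein; (4) Artinian Gorenstein graded $\Rightarrow$ $F = \oplus_{i=0}^{d} F_i$ with $\dim_k F_d = 1$, invoking symmetry of $\HF(F) = \Delta^d(\HF(k[D]))$ and $\dim_k F_0 = 1$.
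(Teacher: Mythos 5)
Your steps (1)--(4) are fine: general linear forms are a $k[D]$-regular sequence, $F$ is Artinian Gorenstein, and an Artinian graded Gorenstein algebra has a one-dimensional top graded piece in its socle degree $r$, with $\dim_k F_r=\dim_k F_0=1$ by symmetry. The genuine gap is the identification $r=\dim k[D]$, which you assert rather than prove. Gorensteinness of $k[D]$ alone gives symmetry of $\HF(F)$ about the socle degree $r$, but it does not force $r=d$: if $D$ is merely Gorenstein (not Gorenstein*), e.g.\ a full simplex or any cone $D=2^{\{v\}}\ast E$, then the $h$-polynomial has degree strictly less than $d$ and $F_{\dim k[D]}=0$, so your argument as written would prove a false statement. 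Your closing sentence, that $r=d$ ``because the $h$-vector of a Gorenstein* complex has the symmetry forcing its top entry to be $1$,'' is circular: that the symmetry of the $h$-vector is centred so that $h_d\neq 0$ (equivalently $F_{\dim k[D]}\neq 0$) is exactly the point where the star condition (that $D$ is not a cone, i.e.\ $D$ equals its core) must be used, and nothing in your write-up invokes it.

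The paper closes this gap by quoting the relevant facts from \cite{BH}: since $k$ is infinite and $k[D]$ is Cohen--Macaulay, $\HF(F)$ equals the $h$-vector of $D$ (proof of Theorem~5.1.10), the $h$-vector vanishes above $\dim k[D]$ (Lemma~5.1.8), and for a Gorenstein* complex the top entry $h_{\dim k[D]}$ is nonzero (Lemma~5.5.4); one-dimensionality then follows from Gorenstein symmetry, as you argue. To repair your proof you must insert an argument for $F_{\dim k[D]}\neq 0$ that genuinely uses the Gorenstein* hypothesis -- for instance via the cited result, or via the fact that for Gorenstein* complexes the socle degree of the Artinian reduction equals the Krull dimension because $\tilde{H}_{d-1}(D;k)\neq 0$, equivalently $D$ coincides with its core.
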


\begin {proof}  
   Since $k$ is an infinite field and $k[D]$ is Gorenstein, hence Cohen--Macaulay, 
  by \cite [proof of Theorem 5.1.10]{BH}
  $\HF(F)$ is equal to the $h$-vector of $D$
 (for a definition of the $h$-vector of a simplicial complex see \cite[p.~205]{BH}).
  As a consequence,  \cite [Lemma~5.1.8]{BH} implies $F_i = 0$ for $i > \dim k[D]$,
  and   \cite [Lemma~5.5.4]{BH} implies that  $F_{\dim k[D]} \not= 0$,
  hence $F_{\dim k[D]}$ is $1$-dimensional by Gorenstein symmetry.
\end {proof}

\section {Weak Lefschetz Property and stellar subdivisions}   \label{sec!wlpofstellars}

The present section contains our main results. In Corollary~\ref{cor!theiffstatement}
we prove  that if  $\sigma$ is a face of a Gorenstein* simplicial complex $D$ and
$2 (\dim \sigma) > \dim D + 1$ then  the Stanley--Reisner ring
$k[D]$ has the WLP if and only if  $k[D_{\sigma}]$ has the WLP.
Moreover, in Corollary~\ref{cor!oneDirection3Statements} we prove that if
$k[\operatorname*{lk}\nolimits_{D}\sigma]$ has the  SLP  and $k[D]$ has the WLP 
then it follows that $k[D_{\sigma}]$ has the WLP.

We fix an infinite field $k$ of arbitrary characteristic
and a pair $(D,\sigma)$, where $D$ is a Gorenstein* simplicial complex 
with vertex set $\{ 1, \dots , n \}$, and
$\sigma =\{ 1,2, \dots, q+1 \}$ is a $q$-face of $D$
with $q \geq 1$.
 We set $d = \dim D +1$,  $R= k[x_1, \dots , x_n]$ 
with the degrees of all variables equal to $1$,
 $x_{\sigma} = \prod_{i=1}^{q+1} x_i  \in R$.
By  $I  \subset R$ we denote the Stanley-Reisner
ideal of $D$, hence $k[D] = R/I$ and $\dim R/I = d$.
We set $I_L = (I:x_{\sigma}) \subset R$, and
$f_1, \dots , f_{d+1}$ denote  $d+1$ general linear
elements of $R$.

Moreover, $T$ is a new variable of degree $1$ and
$J_{st} = ( I,  x_{\sigma}, T I_L)  \subset R[T]$ denotes the Stanley-Reisner ideal of
the stellar subdivision  $D_{\sigma}$,
 hence $k[D_{\sigma}] =R[T]/J_{st}$. We set 
 \[
   I_C = ( I, T^{q+1}, T I_L )  \subset R[T], \quad \quad 
   I_G = (I,  T^{q+1}- x_{\sigma},  T I_L ) \subset R[T],
\]
\[
   A = R/ (I + (f_1, \dots , f_d)),  \quad \quad    B=  R/ ( I_L + (f_1, \dots , f_d)),
\]
\[
    C = R[T]/(I_C + (f_1, \dots , f_d)),  \quad \quad  G = R[T]/(I_G + (f_1, \dots ,f_d)).
\]

The rings   $R[T]/I_C$ and $R[T]/I_G$ are closely related to  the Kustin--Miller
unprojection ring 
$S$ appearing in \cite[Theorem~1.1]{BP1}  and we view them
as  intermediate rings connecting  $k[D_{\sigma}]$ and $k[D]$.
The rings  $A,B,C$ and $G$ are
Artinian reductions of  $k[D], R/I_L, R[T]/I_C$ and $R[T]/I_G$
respectively by linears  that involve only the variables $x_i$ but are, 
otherwise,  general.

The basic properties of $A,B$ and $R/I_L$ are contained in
Lemma~\ref{lem!aboutAandB}, of $C$ are  contained  in
 Lemma~\ref{lem!structureOfCno2}, of $R[T]/I_G$ and $G$ are contained in
Lemma~\ref{lem!structureOfG} and 
of $R[T]/I_C$ are contained in Lemma~\ref{lem!aboutIC}.
Since $\dim k[D] = d$, it follows that  $A$ is a general Artinian reduction of $k[D]$.
Since by Lemma~\ref{lem!aboutAandB} $\dim R/I_L = d$, 
it follows that  $B$ is a general Artinian reduction of $R/I_L$.    
On the other hand $C$  is not a general Artinian reduction of  $R[T]/I_C$,
even though by Lemma~\ref{lem!aboutIC} $\dim R[T]/I_C = d$,
since the $f_i$ do not involve the variable $T$. 

We denote by $\; L \subset 2^{\{q+2,q+3, \dots , n \}}\;$  the link of $\sigma$ in $D$
and set $k[L] = k[L, \{q+2, q+3,\dots , n \}]$. Using that
 \begin{equation}  \label {eqn!relatingLinksDefinitions}
              R/I_L  \iso  (k[L])    [x_1, \dots , x_{q+1}]
\end {equation}
it follows that $B$ is isomorphic to a general Artinian reduction of 
$k[L]$.

\begin {remark}  \label{rem!poincare_duality_for_gor}
We will use the well-known fact,  see for example  \cite[Proposition~3.3]{MN}
or  \cite [Theorem 2.79] {HMetal},
that if $F= \oplus_{i=0}^r F_i$
with $F_r \not= 0$   is a standard
graded Gorenstein Artinian $k$-algebra then
$F_r$ is $1$-dimensional, and for all $i$ with $0 \leq i \leq r$ the
multiplication map $F_i \times F_{r-i} \to F_r \iso k$ is a perfect pairing.
We will refer to $F_{r-i}$ as the Poincar\'e  
dual of $F_i$.
As a consequence, given $i,j$ with $0 \leq i \leq j \leq r$ and
$0 \not= e \in F_i$ there exists $e' \in F_{j-i}$ such that $ee' \not= 0$ in $F_j$.
\end {remark}

If $d$ is even
we set $p_1 =  d/2 - 1, p_2 = d/2$, while if $d$ is odd we 
set  $p_1 = p_2 = (d-1)/2$.

\begin {remark}  \label{rem!aboutwlpforA}
Denote by $\omega \in A_1$ a general linear element.
Since by Lemma~\ref{lem!aboutAandB}   $A$ is Gorenstein 
with $A_i =0$ for $i \geq d+1$ and
$A_d \not= 0$,  Lemma~\ref{lem!wlpinthegorensteincase}
implies that   $A$ has the WLP if and only if the
multiplication by $\omega$
map $A_{p_1} \to A_{p_1 + 1}$ is
injective and by the same lemma this happens if and only if the
multiplication by $\omega$
map  $A_{p_2} \to A_{p_2 + 1}$ is surjective.
\end {remark}

We say that $k[L]$, or equivalently  $R/I_L$ 
or equivalently $B$,  has the  property $M_{q,p_1}$  
if  for a general element $\omega \in B_1$ the
multiplication by $\omega^q$ map  $B_{p_1-q} \to B_{p_1}$ 
is injective.   

\begin {remark}  \label {rem!aboutMqpropertyno2}
Since by Lemma~\ref{lem!aboutAandB} $B$ is Artinian Gorenstein
with $B_i =0 $ for $i \geq d-q$ and $B_{d-q-1} \not= 0$,
by Gorenstein duality  $k[L]$ has the property $M_{q,p_1}$ 
if and only if for a general element $\omega \in B_1$ the
multiplication map by $\omega^q :  B_{d-q-1-p_1} \to B_{d-q-1 -(p_1-q)}$ 
is surjective.     If $d$ is even then $d=2p_1+2$, hence
 $d-q-1-p_1 = p_1+1-q = p_2 - q$, while if $d$ is odd then
$d = 2p_1+1$, hence $d-q-1-p_1 = p_2 - q$.  Hence no
matter if $d$ is even or odd, 
$k[L]$ has the property $M_{q,p_1}$ 
if and only if 
for a general element $\omega \in B_1$ the
multiplication by $\omega^q$ map $B_{p_2-q} \to B_{p_2}$ 
is surjective.   
\end {remark}

\begin {remark}  \label {rem!about_Mq_peoperty}
Assume $d$ is odd.  Since  $d = 2p_1+1$, we have
$d-q-1-(p_1-q)= d-p_1-1=p_1$, hence the multiplication
by $\omega^q$ map in the definition of property $M_{q,p_1}$  is
between Poincar\'e dual graded components of $B$.  
Assume $d$ is even. We have $d = 2p_1+2$ and
$d-q-1-(p_1-q)= d-p_1-1=p_1+1$, hence 
the multiplication by $\omega^q$ map  in the definition 
of the property $M_{q,p_1}$  factors as  $f \circ g$, where
$g$  is the  multiplication 
by $\omega$  map $B_{p_1-q} \to B_{p_1-q+1}$ 
and $f$ is multiplication by $\omega^{q-1}$ map
between the  Poincar\'e  duals  $B_{p_1-q+1}$ and $B_{p_1}$.
As a consequence, no matter if $d$ is even or odd
if $B$ has the SLP  then it follows that  
property $M_{q,p_1}$ holds for $B$.
\end {remark}

\begin {lemma}  \label {lem!aboutMqforhighq}
   If  ($q > p_1$)  or  ($q = p_1$ and the field
 $k$ has characteristic $0$ or a prime number $>  d-q-1$)
  then property $M_{q,p_1}$ holds for $k[L]$.
\end {lemma}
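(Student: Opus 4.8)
The plan is to treat the two cases of the hypothesis essentially separately, both reducing to the Gorenstein symmetry of $B$ together with Lemma~\ref{lem!statemnetaboutepowers}. Recall from Remark~\ref{rem!aboutMqpropertyno2} that $B$ is Artinian Gorenstein with top degree $d-q-1$, so $B_{d-q-1}$ is one-dimensional and $B_i=0$ for $i\geq d-q$; recall also that property $M_{q,p_1}$ asks that for general $\omega\in B_1$ the map $\omega^q:B_{p_1-q}\to B_{p_1}$ be injective.

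First I would dispose of the case $q>p_1$. Then the source $B_{p_1-q}$ sits in negative degree (or degree $<0$), so $B_{p_1-q}=0$ unless $p_1-q=0$, and even when $p_1=q$ we are in the second case; so for $q>p_1$ strictly the source is the zero space and the map is trivially injective, giving $M_{q,p_1}$ for free. (One should double-check the edge behaviour: if $p_1-q=0$ we are not in this subcase, and if $p_1-q<0$ the module $B_{p_1-q}$ is genuinely zero, so injectivity is automatic.)

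The substance is the case $q=p_1$ with the characteristic hypothesis. Here the map in question is $\omega^q:B_0\to B_q=B_{p_1}$, i.e.\ $1\mapsto \omega^q$, and injectivity simply says $\omega^q\neq 0$ in $B$ for general $\omega\in B_1$. Equivalently, by Remark~\ref{rem!aboutMqpropertyno2}, it says the dual map $\omega^q:B_{d-q-1-p_1}\to B_{d-q-1}$ is surjective; when $q=p_1$ and $d=2p_1+1$ this is $\omega^{p_1}:B_{p_1}\to B_{2p_1}=B_{d-q-1}$, and by Poincaré duality (Remark~\ref{rem!poincare_duality_for_gor}) surjectivity of this onto the one-dimensional socle is again equivalent to $\omega^q\neq 0$. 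So in all readings the claim is: for general $\omega\in B_1$, $\omega^q\neq 0$ in $B$. Suppose not; then $\omega^q=0$ for \emph{all} $\omega\in B_1$, because the locus $\{\omega:\omega^q\neq0\}$ is Zariski open in $B_1$, so if it is empty for general $\omega$ it is empty for all $\omega$. Lift to $R$: writing $B=R/\mathfrak{b}$ with $\mathfrak{b}$ the ideal defining $B$, the hypothesis says $a^q\in\mathfrak b$ for every $a\in R_1$. The characteristic assumption — $0$ or a prime $>d-q-1$, in particular $>q$ since $q\le d-q-1$ when $d\ge 2q+1$, which holds as $d=2p_1+1=2q+1$ — lets me apply Lemma~\ref{lem!statemnetaboutepowers} with $e=q$: it forces $R_q\subset \mathfrak b$, i.e.\ $B_q=0$. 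But $q=p_1\le d-q-1$ (indeed $q=p_1$ and $d-q-1=p_1$ here), so $B_q=B_{p_1}=B_{d-q-1}$ is the one-dimensional socle of the Gorenstein ring $B$, which is nonzero — a contradiction. Hence $\omega^q\neq0$ for general $\omega$, and $M_{q,p_1}$ holds.

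I expect the main obstacle to be bookkeeping rather than conceptual: making sure the degree $q=e$ in the intended application of Lemma~\ref{lem!statemnetaboutepowers} really is $\le$ the relevant characteristic bound $d-q-1$, and checking that the various reformulations of $M_{q,p_1}$ (source in degree $p_1-q$ vs.\ the dual statement in Remark~\ref{rem!aboutMqpropertyno2}) all collapse to the single assertion $\omega^q\neq0$ when $q=p_1$, using that $d$ must be odd in this subcase (if $d$ were even then $d=2p_1+2$ and $q=p_1$ gives $d-q-1=p_1+1>p_1$, and one should verify the characteristic bound $d-q-1=p_1+1$ still suffices — the Lemma~\ref{lem!statemnetaboutepowers} application with $e=q=p_1<p_1+1$ is fine). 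Once the degree arithmetic is pinned down, the argument is just: openness of the nonvanishing locus $+$ Lemma~\ref{lem!statemnetaboutepowers} $+$ nonvanishing of the Gorenstein socle.
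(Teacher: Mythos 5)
Your proposal follows essentially the same route as the paper: the case $q>p_1$ is dismissed because $B_{p_1-q}=0$, and for $q=p_1$ one reduces $M_{q,p_1}$ to the assertion $\omega^q\neq 0$ in $B$, assumes it fails for all linear forms, applies Lemma~\ref{lem!statemnetaboutepowers} with $e=q$ (the characteristic hypothesis indeed suffices since $d-q-1\geq q$ in both parities), and contradicts the nonvanishing of $B_{d-q-1}$ from Lemma~\ref{lem!aboutAandB}.

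One slip: your claim that $d$ must be odd when $q=p_1$ is not correct (e.g.\ $d$ even with $q=p_1=d/2-1$ is perfectly possible), so the identification $B_{p_1}=B_{d-q-1}$ with the socle, which your contradiction step uses, only holds for $d$ odd. For $d$ even one has $d-q-1=p_1+1$, and the contradiction instead needs the extra (one-line) observation the paper makes: since $B$ is standard graded and $p_1\leq d-q-1$, the vanishing $B_{p_1}=0$ forces $B_{d-q-1}=0$ as well. With that replacement the argument goes through uniformly in both parities; your verification that the characteristic bound $>d-q-1$ still covers the application of Lemma~\ref{lem!statemnetaboutepowers} with $e=q$ in the even case is correct.
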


\begin {proof} If $q > p_1 $ then $B_{p_1-q} = 0$ and the
result is obvious.

Assume  $q = p_1$.  Since $B_0= k$,  property $M_{q,p_1}$ for $k[L]$ 
is equivalent to $\omega^{p_1} \not= 0$ in $B$ for general $\omega \in B_1$.
To get a contradiction we assume this property is not true, then  
it follows that $b^{p_1} = 0$  for all $b \in B_1$.  
This, together with the assumptions on the characteristic of the field $k$ 
imply by the general Lemma~\ref{lem!statemnetaboutepowers}
that  $B_{p_1} =0$. Since $d-q-1 = d-p_1 -1 \geq p_1$,   
and $B$ is standard graded we get $B_{d-q-1} = 0$. By Lemma~\ref{lem!aboutAandB}  
$B_{d-q-1} \not= 0$  which is a contradiction. 
\end {proof}

\begin {theorem}  \label{thm!forstellarwlp}   Assume $k[D]$ has the WLP and
property $M_{q,p_1}$ holds for $k[L]$.  Then $k[D_{\sigma}]$ has the WLP.
\end {theorem}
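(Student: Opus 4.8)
The plan is to transport the WLP from $k[D]$ to $k[D_\sigma]=R[T]/J_{st}$ through the intermediate rings $C$ and $G$, which realise the Kustin--Miller unprojection picture of \cite{BP1}. First I translate the hypotheses and the goal into statements about single multiplication maps. Since $D_\sigma$ is again Gorenstein$*$, the ring $k[D_\sigma]$ is Gorenstein, so by Lemma~\ref{lem!wlpinthegorensteincase} it has the WLP precisely when, for a general Artinian reduction $F=\bigoplus_{i=0}^{d}F_i$ and general $\omega\in F_1$, the multiplication by $\omega$ map $F_{p_1}\to F_{p_1+1}$ is injective. On the hypothesis side, by Remark~\ref{rem!aboutwlpforA} the assertion that $k[D]$ has the WLP is equivalent to injectivity of the multiplication by $\omega$ map $A_{p_1}\to A_{p_1+1}$ for general $\omega\in A_1$, and by Remarks~\ref{rem!aboutMqpropertyno2} and \ref{rem!about_Mq_peoperty} property $M_{q,p_1}$ for $k[L]$ is exactly injectivity of the multiplication by $\omega^{q}$ map $B_{p_1-q}\to B_{p_1}$ for general $\omega\in B_1$ (equivalently the dual surjectivity statement in degree $p_2$). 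Thus everything reduces to rank statements about $A$ and $B$, and what must ultimately be produced is a single linear form on an Artinian reduction of $k[D_\sigma]$ all of whose multiplication maps have maximal rank.

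The technical core is to prove that $C$ has the WLP. Here I use the structural description of $C=R[T]/(I_C+(f_1,\dots,f_d))$ supplied by Lemma~\ref{lem!structureOfCno2}: the $T$-adic filtration $C\supseteq TC\supseteq\cdots\supseteq T^{q}C\supseteq 0$ has $C/TC\cong A$ and $T^{i}C/T^{i+1}C\cong B$ (shifted by $i$) for $1\le i\le q$, and the connecting multiplication-by-$T$ maps on these subquotients are the canonical surjection $A\twoheadrightarrow B$ of \eqref{eqn!relatingLinksDefinitions} followed by $q-1$ isomorphisms $B\xrightarrow{\ \sim\ }B$. A general linear form of $C$ is, up to a nonzero scalar, $\omega+T$ with $\omega\in A_1$ general, and with respect to the filtration the multiplication by $\omega+T$ map is block lower triangular: the diagonal blocks are multiplication by $\omega$ on $A$ and on each copy of $B$, the subdiagonal blocks the isomorphisms coming from $T$. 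Running a kernel chase through this triangular matrix in degree $m$, one finds that a kernel element is determined by its bottom component $b\in B_{m-q}$, subject to $\omega^{q}b$ lying in the image of the kernel of the multiplication by $\omega$ map $A_m\to A_{m+1}$ under $A_m\twoheadrightarrow B_m$; hence the multiplication by $\omega+T$ map in degree $m$ is injective as soon as multiplication by $\omega$ is injective on $A_m$ and multiplication by $\omega^{q}$ is injective on $B_{m-q}$, and a dual cokernel chase handles the degrees beyond the middle. Using $\HF(C)=\HF(G)$ (Lemmas~\ref{lem!structureOfCno2} and \ref{lem!structureOfG}), which pins the critical degree to $p_1\to p_1+1$, and the hypotheses in the form above, one concludes that multiplication by $\omega+T$ has maximal rank in every degree, i.e.\ $C$ has the WLP. (The exponent $q=\dim\sigma$ is forced by the degree $q+1$ of $x_\sigma$: moving between the relevant Poincar\'e dual pieces of $B$ up the $T$-tower costs $q$ multiplications by $\omega$, which is why the hypothesis needed is $M_{q,p_1}$ rather than some other $M_{q,j}$.)

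Finally I transfer the WLP along the chain $C\rightsquigarrow R[T]/I_C\rightsquigarrow R[T]/I_G\rightsquigarrow k[D_\sigma]$. Since $R[T]/I_C$ is Cohen--Macaulay of dimension $d$ (Lemma~\ref{lem!aboutIC}) and $C$ is its Artinian reduction by the linear system of parameters $f_1,\dots,f_d$, that tuple lies in the WLP locus of such reductions, which is Zariski open (in the style of Lemma~\ref{lem!3openproperties}); being nonempty it is dense, so a general Artinian reduction of $R[T]/I_C$ also has the WLP and hence $R[T]/I_C$ has the WLP. Because $I_C$ is the initial ideal of $I_G$ for a weight order that makes the $x_i$ heavy, with Hilbert function preserved (Lemma~\ref{lem!structureOfG}), and $R[T]/I_G$ is Cohen--Macaulay, Wiebe's Lemma~\ref{lem!aboutWLPofginWiebe} yields that $R[T]/I_G$ has the WLP, whence $G$ has the WLP by Lemma~\ref{lem!generalLemmaForWLP}. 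Since $J_{st}$ is likewise the initial ideal of $I_G$ for the opposite weight order, again with constant Hilbert function (Lemma~\ref{lem!structureOfG}), the Gorenstein rings $R[T]/I_G$ and $R[T]/J_{st}=k[D_\sigma]$ share their $h$-vector; comparing a general Artinian reduction $F$ of $k[D_\sigma]$ with the Gorenstein bridge $G$ through the same filtration/block picture as above --- now using the explicit structure of $G$ from Lemma~\ref{lem!structureOfG} --- and the already established WLP of $G$, one concludes that $k[D_\sigma]$ has the WLP.

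I expect the main obstacle to be the rank count on $C$: besides getting the block bookkeeping along the $T$-adic filtration exactly right, the delicate point is to check that the single-degree hypothesis $M_{q,p_1}$, together with the WLP of $A$, really forces maximal rank of the multiplication by $\omega+T$ map in \emph{all} degrees of $C$ and not merely in the middle one --- this is where the precise identifications in Lemma~\ref{lem!structureOfCno2} (of the subquotients, of the connecting maps, and the resulting control of $\HF(C)$) are indispensable. A secondary source of care is that the flat degenerations linking $R[T]/I_C$, $R[T]/I_G$ and $R[T]/J_{st}$ propagate the WLP only in one direction, so the last step has to be organised around the Gorenstein ring $R[T]/I_G$ and its Artinian reduction $G$ rather than as a naive semicontinuity argument.
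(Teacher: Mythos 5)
Your skeleton is the right one, and its middle matches the paper: the reduction $k[D]$ WLP $\Leftrightarrow$ $A$ WLP and the transfer $C$ WLP $\Rightarrow$ $R[T]/I_C$ WLP are Lemma~\ref{lem!technicallemma1}, and your block-triangular kernel/cokernel chase on $C$ at the degrees $p_1$ and $p_2$ is exactly the computation in Lemma~\ref{lem!technicallemma3}. But the point you yourself flag as "the main obstacle" is a genuine gap, not a bookkeeping check. Since $C$ is not Gorenstein, maximal rank of multiplication by $\omega+T$ in the two middle degrees does not propagate to all degrees by Hilbert-function considerations, and your criterion "injective in degree $m$ as soon as $\omega$ is injective on $A_m$ and $\omega^{q}$ is injective on $B_{m-q}$" cannot be invoked for $m<p_1$: the hypothesis $M_{q,p_1}$ only concerns $B_{p_1-q}$, and WLP of the Gorenstein $B$ is not assumed. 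The paper closes this with a substitute for Poincar\'e duality on $C$ (Lemma~\ref{lem!technicallemma4}: every nonzero $c\in C_i$ with $i<p_1$ pairs nontrivially into $C_{p_1}$, proved by a case analysis using the duality of $A$ and of $B$ and the powers of $T$), which shows that injectivity at $p_1$ forces injectivity in all lower degrees; surjectivity beyond $p_2$ then follows from standard-gradedness (Lemma~\ref{lem!technical_lemma_57}). Without this step your "maximal rank in every degree" is unsupported. (A small inaccuracy besides: a kernel element in degree $m$ is determined by the pair $(a,b_{m-q})$ with $\omega a=0$ and $\bar a=\pm\omega^{q}b_{m-q}$, not by the bottom component alone; this does not affect your criterion.)

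The final transfer is also mis-assembled. The passage $C\Rightarrow R[T]/I_C\Rightarrow R[T]/I_G$ is fine (Wiebe's Lemma~\ref{lem!aboutWLPofginWiebe} goes from the initial-ideal quotient to $R[T]/I_G$), but your last step needs WLP of $R[T]/I_G$ (or of $G$) to imply WLP of $k[D_\sigma]=R[T]/J_{st}$, where $J_{st}$ is an initial ideal, i.e.\ a degeneration, of $I_G$; semicontinuity runs in the opposite direction, and the proposed "comparison of a general Artinian reduction of $k[D_\sigma]$ with $G$ through the same block picture" has no content, because a general Artinian reduction of $k[D_\sigma]$ admits no analogue of the $A\oplus B^{q}$ decomposition (the linear forms must involve $T$ to be general); the paper's Remark~\ref{rem!aboutUsingG} records precisely that it is unclear how to exploit the WLP of $G$. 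The paper's actual argument bypasses $I_G$ here: apply the automorphism $\phi$ of $R[T]$ with $x_i\mapsto x_i+T$ for $i\in\sigma$, note that $I_C$ is the initial ideal of $\phi(J_{st})$ in reverse lexicographic order with $T$ largest (the containment is clear and the Hilbert functions agree by Lemma~\ref{lem!aboutIC}), and then apply Wiebe's lemma in the correct direction to conclude that $R[T]/\phi(J_{st})\cong k[D_\sigma]$ has the WLP (Lemma~\ref{lem!technicallemma2}, following Murai). With that substitution your outline becomes the paper's proof; the rings $R[T]/I_G$ and $G$ are not needed for this theorem at all --- they serve the converse direction, Theorem~\ref{thm!forsDwlp}.
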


\begin {proof}
   By  Lemma~\ref{lem!technicallemma1}    $A$ has the WLP.
   Hence by Lemma~\ref{lem!technicallemma3} $C$ has the WLP.
   As a consequence, by  Lemma~\ref{lem!technicallemma1}    
   $R[T]/I_C$ has the WLP.  Using Lemma~\ref{lem!technicallemma2}
   the result follows.
\end {proof}

\begin {corollary}  \label{cor!oneDirection3Statements}
  i)    Assume  $k[L]$  has the SLP. Then $k[D]$
WLP  implies that   $k[D_{\sigma}]$ has the WLP.
   
  ii)  Assume $q > p_1 $ or  ($q =p_1$ and the field
      $k$ has characteristic $0$ or a prime number 
      $ > d-q-1$).  Then $k[D]$ WLP implies that  $k[D_{\sigma}]$ has the WLP.
\end {corollary}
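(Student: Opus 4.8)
The plan is to derive both statements directly from Theorem~\ref{thm!forstellarwlp}: in each case it suffices to check that property $M_{q,p_1}$ holds for $k[L]$, after which the hypothesis that $k[D]$ has the WLP lets Theorem~\ref{thm!forstellarwlp} conclude that $k[D_{\sigma}]$ has the WLP.

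For part~(ii) there is essentially nothing to do beyond quoting Lemma~\ref{lem!aboutMqforhighq}: its hypotheses ($q > p_1$, or $q = p_1$ together with the stated restriction on the characteristic of $k$) are exactly the hypotheses of~(ii), and its conclusion is precisely that $k[L]$ has property $M_{q,p_1}$. Combined with the assumption that $k[D]$ has the WLP, Theorem~\ref{thm!forstellarwlp} gives the claim.

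For part~(i), the plan is first to transfer the SLP hypothesis from $k[L]$ to $B$. Since $L$ is Gorenstein$*$, the ring $k[L]$ is Cohen--Macaulay, so the hypothesis that $k[L]$ has the SLP means, by definition, that a general Artinian reduction of $k[L]$ has the SLP as an Artinian algebra. By \eqref{eqn!relatingLinksDefinitions} and the remarks following it, $B$ is isomorphic to such a general Artinian reduction, hence $B$ has the SLP. Now Remark~\ref{rem!about_Mq_peoperty} applies verbatim: whenever $B$ has the SLP, property $M_{q,p_1}$ holds for $B$, equivalently for $k[L]$. Invoking Theorem~\ref{thm!forstellarwlp} once more, with $k[D]$ assumed to have the WLP, finishes the proof.

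The argument is entirely a repackaging of earlier results, so I do not expect a serious obstacle; the only point deserving care is the identification of $B$ with a general Artinian reduction of $k[L]$. This rests on the facts that adjoining the polynomial variables $x_1,\dots,x_{q+1}$ does not change the $h$-vector and that cutting $R/I_L$ down by $d$ general linear forms of $R$ (rather than of $R/I_L$ itself) still yields a general Artinian reduction of $k[L]$; as already noted in the text immediately after \eqref{eqn!relatingLinksDefinitions}, this is available and can simply be cited.
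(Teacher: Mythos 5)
Your proposal is correct and follows essentially the same route as the paper: part (ii) is Lemma~\ref{lem!aboutMqforhighq} plus Theorem~\ref{thm!forstellarwlp}, and part (i) is Remark~\ref{rem!about_Mq_peoperty} (SLP of $k[L]$, i.e.\ of $B$, gives property $M_{q,p_1}$) plus Theorem~\ref{thm!forstellarwlp}. The extra care you take in identifying $B$ with a general Artinian reduction of $k[L]$ is exactly the point already recorded in the text after \eqref{eqn!relatingLinksDefinitions}, which the paper uses implicitly.
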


\begin {proof}   Part i)  follows from Theorem~\ref{thm!forstellarwlp},
 since by Remark~\ref{rem!about_Mq_peoperty}
$k[L]$  SLP  implies that
the property $M_{q,p_1}$ holds for  $k[L]$.

We prove Part ii). Using Lemma~\ref{lem!aboutMqforhighq}   property $M_{q,p_1}$ holds for $k[L]$.
Hence  $k[D_{\sigma}]$ has the WLP by Theorem~\ref{thm!forstellarwlp}.
\end {proof}

The proof of the following theorem will be given in Subsection~\ref{subs!proofOfTHmforsDwlp}.

\begin {theorem}  \label{thm!forsDwlp}  Assume $q > p_2 $  and $k[D_{\sigma}]$ has the WLP.
Then $k[D]$ has the WLP.  
\end {theorem}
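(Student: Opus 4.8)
The plan is to reverse the chain of implications used in the proof of Theorem~\ref{thm!forstellarwlp}, running from $k[D_{\sigma}]$ back to $k[D]$, and to exploit the hypothesis $q > p_2$ to control the link $k[L]$ for free. First I would record that, by Remark~\ref{rem!aboutMqpropertyno2}, property $M_{q,p_1}$ for $k[L]$ is equivalent to surjectivity of multiplication by $\omega^q\colon B_{p_2-q}\to B_{p_2}$ for general $\omega\in B_1$; since $q>p_2$ forces $B_{p_2-q}=0$, this surjectivity fails in general unless $B_{p_2}=0$ as well. More importantly, the assumption $q>p_2\ge p_1$ puts us in the regime of Lemma~\ref{lem!aboutMqforhighq} (the case $q>p_1$), so property $M_{q,p_1}$ holds for $k[L]$ automatically, with no hypothesis on the characteristic. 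Thus the only thing to be extracted from ``$k[D_{\sigma}]$ has the WLP'' is the WLP of $k[D]$ itself.

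Next I would unwind the implications through the intermediate rings $R[T]/I_C$ and $R[T]/I_G$. The hypothesis is that $k[D_{\sigma}]=R[T]/J_{st}$ has the WLP. The plan is to pass first to $R[T]/I_C$: by Lemma~\ref{lem!technicallemma2} (in the direction opposite to the one quoted in the proof of Theorem~\ref{thm!forstellarwlp}), $k[D_{\sigma}]$ WLP should give $R[T]/I_C$ WLP, using that $I_C$ is obtained from $J_{st}$ by replacing $x_{\sigma}$ with $T^{q+1}$ and that the passage is governed by a flat degeneration or a Kustin--Miller unprojection comparison. Then, via Lemma~\ref{lem!technicallemma1}, the WLP of $R[T]/I_C$ descends to its Artinian reduction $C$, which in turn (again Lemma~\ref{lem!technicallemma3} read backwards) should yield WLP for $A$, the general Artinian reduction of $k[D]$; and $A$ WLP is exactly $k[D]$ WLP. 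The role of the bound $q>p_2$ here is to guarantee that the ``extra'' contribution of the new variable $T$ in $I_C$ lives in degrees $\ge q+1 > p_2+1$, i.e. strictly above the critical degree $p_2$ in which WLP of a Gorenstein Artinian algebra is tested (Lemma~\ref{lem!wlpinthegorensteincase}), so that the Hilbert-function bookkeeping relating $C$ and $A$ does not disturb the relevant multiplication map.

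The main obstacle I anticipate is that the four technical lemmas (Lemmas~\ref{lem!technicallemma1}--\ref{lem!technicallemma3} and the structural lemmas on $C$, $G$, $I_C$) are presumably stated and proved in one direction only, namely the direction needed for Theorem~\ref{thm!forstellarwlp}; reversing them is not automatic. The cleanest route is probably not to literally reverse each lemma but to reduce everything to Hilbert functions: by Lemma~\ref{lem!aboutGorensteinstar} and Remark~\ref{rem!aboutwlpforA}, $k[D]$ has the WLP iff multiplication by $\omega^?$ (the single critical map in degree $p_2$) on $A$ is surjective, equivalently iff $\HF(A/(\omega)) = \Delta^{+}(\HF(A))$; and one should show that if the corresponding equality failed for $A$, then $\HF(C)$ and hence $\HF(R[T]/I_C)$ and $\HF(k[D_{\sigma}])$ would be forced to be non-unimodal, or more precisely would violate the $\Delta^{+}$ criterion, contradicting WLP of $k[D_{\sigma}]$. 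Here the inequality $q>p_2$ is what makes the degree shift by $T,T^{2},\dots,T^{q}$ land entirely beyond degree $p_2$, so that a failure of surjectivity at degree $p_2$ in $A$ propagates verbatim to $k[D_{\sigma}]$. Making this propagation precise — in particular checking that no cancellation occurs between the $I$-part and the $T I_L$-part of $I_C$ in degrees $\le p_2+1$, which is where the Artinian reduction and the structural Lemmas~\ref{lem!structureOfCno2} and \ref{lem!aboutIC} must be invoked — is the step I expect to require the most care.
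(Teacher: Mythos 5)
Your primary route has a genuine gap at its first link: you propose to deduce the WLP of $R[T]/I_C$ from the WLP of $k[D_{\sigma}]$ by ``reversing'' Lemma~\ref{lem!technicallemma2}, but that lemma rests on Wiebe's Lemma~\ref{lem!aboutWLPofginWiebe}, which transfers the WLP only from an initial ideal to the original ideal, never back; the converse implication ($k[D_{\sigma}]$ WLP $\Rightarrow$ $R[T]/I_C$ WLP) is exactly one of the open questions recorded in Section~\ref{sec!furtherresults}, and the degeneration comparison you invoke yields only Conca's one-sided inequality between the Hilbert functions of the general linear sections (Proposition~\ref{prop!DiscussionAboutPaPb}), which points in the unhelpful direction. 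The paper's own proof of Theorem~\ref{thm!forsDwlp} avoids $I_C$ at this step entirely: it works with the Gorenstein deformation $I_G=(I,\,T^{q+1}-x_{\sigma},\,TI_L)$, transfers the WLP from $k[D_{\sigma}]=H_0$ to $R[T]/I_G\cong H_1$ by the openness Lemma~\ref{lem!3openproperties} applied to the family $H_c$, then to the (non-general) Artinian reduction $G$ by Lemma~\ref{lem!generalLemmaForWLP} (this is Lemma~\ref{lem!structureOfG}iii), and concludes by a direct element computation in $G$: writing $g=e+\sum_{i=1}^{q}b_{p_2-i}T^i$ with $e\in A_{p_2}$, $b_{p_2-i}\in B_{p_2-i}$, the hypothesis $q>p_2$ forces $b_{p_2-q}=0$, so no $T^{q+1}=x_{\sigma}$ term appears and comparing $A$-components of $(\omega+T)g=a$ gives $a=\omega e$; hence $A_{p_2}\to A_{p_2+1}$ is surjective, and Remark~\ref{rem!aboutwlpforA} and Lemma~\ref{lem!technicallemma1} finish. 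Your plan never uses this Gorenstein deformation.

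Your fallback (``reduce everything to Hilbert functions of general linear sections'') is in substance the paper's second proof, Remark~\ref{rem!secondproofofThmforhighq}, and it does work under $q>p_2$ --- but the step you defer as ``requiring the most care'' is precisely the content. Concretely, one compares $R/{\mathcal{K}}_a$ (a general linear section of a general Artinian reduction of $k[D_{\sigma}]$) with $R/{\mathcal{K}}_b$ (a general linear section of $C$): both are quotients of $R/\mathcal{L}$ by a single element of degree $q+1$ ($x_{\sigma}$, resp.\ $f_1^{q+1}$), so their Hilbert functions agree in degrees $\le q$; WLP of $k[D_{\sigma}]$ forces $\HF(p_2+1,R/{\mathcal{K}}_a)=0$, and since $p_2+1\le q$ the same vanishing holds for $R/{\mathcal{K}}_b$ and hence in all higher degrees, so the two Hilbert functions agree everywhere; then Corollary~\ref{corol!HFofsectionofC}, Lemma~\ref{lem!technicallemma3} (whose direction $C$ WLP $\Rightarrow$ $A$ WLP is proved in the paper, so no reversal issue there) and Lemma~\ref{lem!technicallemma1} conclude. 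Note that your heuristic for why no cancellation occurs is off: the ``extra'' $T$-contributions to $C$, namely the summands $B_{i-j}T^j$ of Remark~\ref{rem!structureOfCSecondVersion} coming from $TI_L$, do occur in degrees $\le p_2+1$; what lives in degree $q+1>p_2+1$ is only the difference between $J_{st}$ and $I_C$ (the generator $x_{\sigma}$ versus $T^{q+1}$), and that is the actual reason the comparison below the critical degree is undisturbed. Finally, the aside about surjectivity of $\omega^q\colon B_{p_2-q}\to B_{p_2}$ is vacuous: when $q>p_2$ the socle degree $d-q-1$ of $B$ is strictly less than $p_2$, so $B_{p_2}=0$ as well; in any case $M_{q,p_1}$ plays no role in this direction of Lemma~\ref{lem!technicallemma3}.
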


\begin {corollary}  \label{cor!theiffstatement}
 Assume $2 (\dim \sigma) > \dim D +1$. Then $k[D]$ has the WLP if and only if 
       $k[D_{\sigma}]$  has the WLP.  
\end {corollary}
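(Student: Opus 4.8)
The plan is to recognize this as nothing more than the conjunction of the two one-directional results already in hand, once the hypothesis on $\dim\sigma$ is translated into the numerical conditions $q>p_1$ and $q>p_2$ that those results require.

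First I would unwind the standing notation. With the conventions fixed just before Remark~\ref{rem!poincare_duality_for_gor}, one has $\dim\sigma = q$ and $\dim D + 1 = d$, so the assumption $2(\dim\sigma) > \dim D + 1$ is exactly $2q > d$, i.e.\ $q > d/2$; since $q$ is an integer this forces $q \geq \lfloor d/2\rfloor + 1$. When $d$ is even we have $\lfloor d/2\rfloor = d/2 = p_2 = p_1 + 1$, so $q \geq p_2 + 1$, whence $q > p_2$ and $q > p_1$. When $d$ is odd we have $\lfloor d/2\rfloor = (d-1)/2 = p_1 = p_2$, so $q \geq p_1 + 1$, whence again $q > p_1$ and $q > p_2$. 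Thus, regardless of the parity of $d$, the hypothesis yields simultaneously $q > p_1$ and $q > p_2$.

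For the implication ``$k[D]$ has the WLP $\Rightarrow$ $k[D_\sigma]$ has the WLP'' I would apply Corollary~\ref{cor!oneDirection3Statements}(ii): since $q > p_1$, the first alternative of its hypothesis is met (in particular the characteristic condition attached to the borderline case $q = p_1$ never has to be invoked), so $k[D]$ WLP implies $k[D_\sigma]$ WLP. For the reverse implication ``$k[D_\sigma]$ has the WLP $\Rightarrow$ $k[D]$ has the WLP'' I would apply Theorem~\ref{thm!forsDwlp}: since $q > p_2$, its hypothesis is satisfied, so $k[D_\sigma]$ WLP implies $k[D]$ WLP. Combining the two directions gives the claimed equivalence.

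I do not expect a genuine obstacle in this corollary itself, as it is pure bookkeeping: the entire mathematical content sits in Theorem~\ref{thm!forsDwlp} (and, through Theorem~\ref{thm!forstellarwlp} and Corollary~\ref{cor!oneDirection3Statements}, in the four technical lemmas). The only point deserving a moment's attention is the parity split above, namely verifying that the single inequality $2q > d$ delivers both $q > p_1$ (needed for the forward direction) and $q > p_2$ (needed for the reverse direction); this is immediate from the definitions of $p_1,p_2$ but worth spelling out so that no case is overlooked.
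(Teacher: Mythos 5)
Your proposal is correct and follows essentially the same route as the paper: translate the hypothesis $2(\dim\sigma)>\dim D+1$ into $q>p_2$ (hence also $q>p_1$ since $p_2\geq p_1$), then apply Corollary~\ref{cor!oneDirection3Statements}(ii) for the forward direction and Theorem~\ref{thm!forsDwlp} for the reverse. The only cosmetic difference is that you phrase the parity bookkeeping via $q\geq\lfloor d/2\rfloor+1$ rather than the paper's case-by-case equivalence, which changes nothing of substance.
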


\begin {proof}
   We first prove that the statement   $2 (\dim \sigma) > \dim D +1$ is equivalent to
      $q > p_2$. Indeed, by the definitions   $q = \dim \sigma$, $d = \dim D +1$.  
       Assume  $d$ is even.      Then  $ p_2 = d/2$.   Hence $q > p_2$
             is equivalent to $\dim \sigma > d/2  $  which is equivalent
                               to $ 2 (\dim \sigma) > d= \dim D +1$.
            Assume now $d$ is odd.       Then  $ p_2 = (d-1)/2$,  hence $q > p_2$
             is equivalent to $\dim \sigma > (d-1)/2  $  which is equivalent
                               to $ 2 (\dim \sigma) > d-1$.
             But $d$ odd implies $d-1$ even, hence  since $ 2 (\dim \sigma)$ is always even
                   $ 2 (\dim \sigma) > d-1$ is equivalent to
                   $ 2 (\dim \sigma) > d = \dim D +1 $.  

     Assume  $2 (\dim \sigma) > \dim D +1$  and $k[D]$ has the WLP.   As we said above
     $q > p_2$.     Since $p_2 \geq p_1$, we have $q > p_1$, hence Part ii) of
     Corollary~\ref{cor!oneDirection3Statements}  implies that  $k[D_{\sigma}]$  has the WLP.  

    Assume now   $2 (\dim \sigma) > \dim D +1$  and $k[D_\sigma]$ has the WLP.   As we said above
     $q > p_2$.  By Theorem~\ref{thm!forsDwlp}  $k[D]$ has the WLP.
\end {proof}

\begin{remark}    For a second proof  of  Corollary~ \ref{cor!theiffstatement}   see
Remark~\ref {rem!secondproofofThmforhighq}.
\end {remark}

\begin {lemma}  \label{lem!aboutAandB}
   The rings  $k[D], k[D_{\sigma}], A$ and $B$ are Gorenstein.  
   $A= \oplus_{i=0}^d A_i$ with $A_d$ $1$-dimensional.
   $R/I_L$ is Gorenstein with $\dim R/I_L=d$. We have 
  $B= \oplus_{i=0}^{d-q-1} B_i$ and $B_{d-q-1}$  is $1$-dimensional.
   Moreover, for all $m \geq 0$ we have 
  \[
        \HF( m, k[D_{\sigma}])  =  \HF( m, k[D])   +
                         \sum_{i=1}^q   \HF( m-i, R/I_L).
   \]
\end {lemma}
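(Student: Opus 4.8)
The plan is to dispatch the Gorenstein assertions first, then the grading of $A$ and $B$, and finally the Hilbert function identity by a standard monomial count.

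\emph{The Gorenstein statements.} By the very definition of Gorenstein*, $k[D]$ is Gorenstein. As recalled in Section~\ref{sec!general_notations}, $L$ is Gorenstein* with $\dim L = \dim D - \dim\sigma - 1 = d-q-2$, so $k[L]$ is Gorenstein of Krull dimension $d-q-1$; by (\ref{eqn!relatingLinksDefinitions}), $R/I_L$ is a polynomial ring over $k[L]$ in the $q+1$ variables $x_1,\dots,x_{q+1}$, hence is Gorenstein of dimension $(d-q-1)+(q+1)=d$. Both $k[D]$ and $R/I_L$ are Cohen--Macaulay of dimension $d$, so by Lemma~\ref{lem!aboutGeneralLinearsAndDepth} the $d$ general linear forms $f_1,\dots,f_d$ form a regular sequence on each of them, and a quotient of a Gorenstein ring by a regular sequence of one-forms is again Artinian Gorenstein; thus $A$ and $B$ are Artinian Gorenstein. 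That $k[D_\sigma]$ is Gorenstein I would simply quote: $D_\sigma$ is again Gorenstein* (a stellar subdivision of a homology sphere is a homology sphere), cf.~\cite{BP1}.

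\emph{The grading of $A$ and $B$.} Since the $f_i$ are $d = \dim k[D]$ general linear forms, $A$ is a general Artinian reduction of $k[D]$, and $D$ is Gorenstein*, so Lemma~\ref{lem!aboutGorensteinstar} gives $A=\bigoplus_{i=0}^{d}A_i$ with $A_d$ one-dimensional. By (\ref{eqn!relatingLinksDefinitions}) and the remark following it, $B$ is isomorphic to a general Artinian reduction of $k[L]$, and $L$ is Gorenstein* with $\dim k[L]=d-q-1$, so the same lemma applied to $L$ gives $B=\bigoplus_{i=0}^{d-q-1}B_i$ with $B_{d-q-1}$ one-dimensional.

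\emph{The Hilbert function identity.} Here I would argue with the monomial $k$-basis of $k[D_\sigma]=R[T]/J_{st}$, $J_{st}=(I,x_\sigma,TI_L)$. A monomial $x^aT^i$ lies in $J_{st}$ iff $x^a\in I$, or $x_\sigma\mid x^a$, or ($i\ge 1$ and $x^a\in I_L$); since $I\subseteq I_L$, for $i\ge 1$ this reduces to ``$x^a\in I_L$ or $x_\sigma\mid x^a$''. Grouping the degree-$m$ monomials outside $J_{st}$ by the exponent $i$ of $T$, $\HF(m,k[D_\sigma])$ is the number of degree-$m$ monomials $x^a\notin I$ with $x_\sigma\nmid x^a$, plus $\sum_{i\ge 1}$ of the number of degree-$(m-i)$ monomials $x^a\notin I_L$ with $x_\sigma\nmid x^a$. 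Writing $x^a=x_\sigma x^b$ identifies the degree-$m$ monomials divisible by $x_\sigma$ with all monomials $x^b$ of degree $m-q-1$, and $x^a\notin I$ iff $x^b\notin (I:x_\sigma)=I_L$; so the first count is $\HF(m,k[D])-\HF(m-q-1,R/I_L)$. Using that $I$ (hence $I_L$) is radical, so $(I:x_\sigma^2)=(I:x_\sigma)=I_L$, the same substitution shows the $i$-th count equals $\HF(m-i,R/I_L)-\HF(m-i-q-1,R/I_L)$. Hence
\[
  \HF(m,k[D_\sigma]) = \HF(m,k[D]) - \HF(m-q-1,R/I_L)
   + \sum_{i\ge 1}\bigl(\HF(m-i,R/I_L) - \HF(m-i-q-1,R/I_L)\bigr).
\]
Since $\HF(\cdot,R/I_L)$ vanishes in negative degrees, the sum telescopes to $\sum_{i=1}^{q+1}\HF(m-i,R/I_L)$, whose $i=q+1$ term cancels the $-\HF(m-q-1,R/I_L)$, leaving $\HF(m,k[D_\sigma])=\HF(m,k[D])+\sum_{i=1}^{q}\HF(m-i,R/I_L)$, as claimed.

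The one step that is not pure bookkeeping is the Gorensteinness of $k[D_\sigma]$: unlike the other three rings, it does not drop out of the setup here, and I would import it from \cite{BP1} (equivalently, from the fact that a stellar subdivision of a $k$-homology sphere is again one, which follows from the corresponding statement for $D$ together with the behaviour of links under stellar subdivision).
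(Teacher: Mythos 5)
Your proposal is correct, and for the structural assertions it follows the same route as the paper: Gorenstein* of $D$ gives $k[D]$ Gorenstein, the link $L$ is Gorenstein* with $\dim k[L]=d-q-1$ so that via (\ref{eqn!relatingLinksDefinitions}) the ring $R/I_L$ is Gorenstein of dimension $d$, and Lemma~\ref{lem!aboutGorensteinstar} applied to $D$ and to $L$ yields the stated gradings of $A$ and $B$; likewise, for the Gorensteinness of $k[D_\sigma]$ both you and the paper import an external fact (the paper quotes \cite[p.~188]{St2} that Gorenstein* is topological, you quote the homology-sphere formulation via \cite{BP1}), which is an equivalent appeal. The genuine difference is the Hilbert function identity: the paper simply cites \cite[Remark~5]{BP1}, whereas you give a self-contained count of standard monomials of $R[T]/J_{st}$, grouped by the $T$-exponent, using $x^a x_\sigma \in I \Leftrightarrow x^a \in (I:x_\sigma)=I_L$, the radicality of $I$ to get $(I_L:x_\sigma)=(I:x_\sigma^2)=(I:x_\sigma)=I_L$, and a telescoping sum. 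I checked this computation and it is correct; what it buys is independence from \cite{BP1} for this part of the lemma (at the cost of a page of bookkeeping), while the paper's citation keeps the proof short and consistent with its general strategy of leaning on the unprojection framework of \cite{BP1}.
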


\begin {proof}  Since by assumption  $D$ is Gorenstein*, it follows that $k[D]$ is Gorenstein.
    By definition $A$ is a general Artinian reduction of $k[D]$, hence it is also Gorenstein.
   Since $\dim k[D]=d$,
    using Lemma~\ref{lem!aboutGorensteinstar}  we get 
   $A= \oplus_{i=0}^d A_i$ with $A_d$ $1$-dimensional.

  By \cite[p.~188]{St2} Gorenstein* is a topological property.
  Hence a  stellar subdivision of a Gorenstein*  simplicial complex 
  is again Gorenstein*. As a consequence
  $k[D_{\sigma}]$  is Gorenstein.

  As already mentioned in Section~\ref{sec!general_notations},
  we have that $L \subset 2^{\operatorname{supp}L}$ is
  Gorenstein*,  with  $\dim k[L] = d-q-1$.
   Using  Equation~(\ref {eqn!relatingLinksDefinitions})
  it follows that $R/I_L$ is Gorenstein of dimension $d$.
  Since $B$ is isomorphic to a general Artinian reduction of $k[L]$,
  it follows that $B$ is Gorenstein.  
  Moreover, Lemma~\ref{lem!aboutGorensteinstar}  implies that
  $B= \oplus_{i=0}^{d-q-1}  B_i$ and $B_{d-q-1}$ is $1$-dimensional.

   The equation between the Hilbert functions follows from \cite[Remark~5]{BP1}.
\end {proof}

\begin {lemma}  \label{lem!structureOfCno2}
   (Recall $\sigma$ is a $q$-face, with $q \geq 1$)
There is a well-defined bijective $k$-linear map of vector spaces
\[
   A \oplus B^q  \to C,  \quad  ([a], [b_1], \dots , [b_q]) \mapsto
          [a + \sum_{i=1}^q b_i T^i ]       
\]
for   $a,b_i \in R$.  
As a corollary, $C$ is Artinian and for all $m \geq 0$
 \[
     \HF (m, C) = \HF (m, A) + \sum_{i=1}^q \HF (m-i, B).
\]
Hence, $\HF(C)$ is equal to the Hilbert function of
a general Artinian reduction of $k[D_{\sigma}]$.
In  particular $C_{i} = 0$ for $i \geq d+1$ and $C_{d}$ is $1$-dimensional.
\end {lemma}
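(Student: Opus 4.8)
The plan is to analyze the ideal $I_C = (I, T^{q+1}, T I_L) \subset R[T]$ directly as an $R$-module, using the grading by powers of $T$. First I would observe that since $I_C$ contains $T^{q+1}$, every element of $R[T]/I_C$ can be written as $[a_0 + a_1 T + \cdots + a_q T^q]$ with $a_i \in R$; so as an $R$-module $R[T]/I_C$ is spanned by $R/I$ in the $T^0$-component and by images of $R$ in each $T^i$-component for $1 \le i \le q$. The key point is to identify the $T^i$-component for $i \ge 1$: the relation $T I_L \subset I_C$ forces $T^i a \in I_C$ whenever $a \in I_L$ (multiply by $T^{i-1}$), and conversely one checks — using that $I_C$ is generated in $T$-degrees $0$, $q+1$, and $1$ (the last with $R$-coefficients lying in $I_L$) — that $T^i a \in I_C$ with $1 \le i \le q$ implies $a \in I_L$. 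Hence the $T^i$-graded piece of $R[T]/I_C$ is $R/I_L$ for each $i = 1, \dots, q$, and $R[T]/I_C \iso (R/I) \oplus (R/I_L)^{\oplus q}$ as graded $R$-modules, with the $i$-th copy of $R/I_L$ shifted by $i$.

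Next I would pass to the Artinian reductions. By definition $A = R/(I + (f_1, \dots, f_d))$ and $B = R/(I_L + (f_1, \dots, f_d))$, and $C = R[T]/(I_C + (f_1, \dots, f_d))$. Since the $f_j$ involve only the $x_i$, killing them commutes with the $T$-grading decomposition above, so $C \iso A \oplus B^{\oplus q}$ as $k$-vector spaces, with the stated map $([a], [b_1], \dots, [b_q]) \mapsto [a + \sum_{i=1}^q b_i T^i]$; well-definedness and bijectivity are then immediate from the module decomposition. That $C$ is Artinian follows because $A$ is Artinian (it is a general Artinian reduction of $k[D]$, Lemma~\ref{lem!aboutAandB}) and $B$ is Artinian (isomorphic to a general Artinian reduction of $k[L]$, again Lemma~\ref{lem!aboutAandB}), so $C$ is a finite-dimensional $k$-vector space; equivalently $\dim C = 0$. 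The Hilbert function formula $\HF(m, C) = \HF(m, A) + \sum_{i=1}^q \HF(m - i, B)$ then reads off directly from the graded direct-sum decomposition, the shift by $i$ accounting for the $T^i$ factor.

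Finally, to get the comparison with $k[D_\sigma]$: by Lemma~\ref{lem!aboutAandB}, $\HF(m, k[D_\sigma]) = \HF(m, k[D]) + \sum_{i=1}^q \HF(m-i, R/I_L)$, and since $A$ is a general Artinian reduction of $k[D]$ (of dimension $d$) and $B$ is a general Artinian reduction of $k[L]$, with $\dim R/I_L = d$, applying $\Delta^d$ to this identity yields exactly $\HF(m, C) = \HF(m, A) + \sum_{i=1}^q \HF(m-i, B)$ — i.e. $\HF(C)$ equals the Hilbert function of a general Artinian reduction of $k[D_\sigma]$. The statements $C_i = 0$ for $i \ge d+1$ and $C_d$ one-dimensional then follow from the corresponding facts for $A$ (Lemma~\ref{lem!aboutAandB}: $A_d$ is one-dimensional, $A_i = 0$ for $i > d$) together with the fact that $B_j = 0$ for $j \ge d - q$, so the shifted copies $B^{\oplus q}$ contribute nothing in degrees $\ge d$. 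I expect the main obstacle to be the converse direction in identifying the $T^i$-component — i.e. verifying that $T^i a \in I_C$ with $1 \le i \le q$ genuinely forces $a \in I_L$ rather than something larger; this is where one must use carefully that the only generator of $I_C$ in positive $T$-degree below $q+1$ is $T \cdot I_L$, so a monomial-order or direct bookkeeping argument on $T$-degrees is needed.
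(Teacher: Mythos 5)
Your proposal is correct and follows essentially the same route as the paper: the content of your $T$-graded decomposition of $R[T]/I_C$ (using that the generators of $I_C$ are $T$-homogeneous with $I\subset I_L$, and that the $f_j$ do not involve $T$) is exactly the coefficient-of-$T$-powers bookkeeping the paper uses to check that the map $A\oplus B^q\to C$ is well defined, injective and surjective, and the Hilbert-function comparison with $k[D_\sigma]$ via the identity of Lemma~\ref{lem!aboutAandB} is the same. The only cosmetic difference is at the end: you deduce $C_i=0$ for $i\ge d+1$ and $\dim_k C_d=1$ directly from the decomposition and the vanishing ranges of $A$ and $B$, whereas the paper gets them by applying Lemma~\ref{lem!aboutGorensteinstar} to the Gorenstein* complex $D_\sigma$; both are fine.
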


\begin {proof}
 By definition $C= R[T]/ (I, TI_L, T^{q+1}, f_1, \dots , f_d)$. We also have
$A= R/(I, f_1, \dots , f_d)$ and $B = R/( I_L, f_1, \dots , f_d)$. 
Denote by $\phi$ the map in the statement of the present proposition.

$\phi$ is well-defined: Assume  $a,a',b_i,b_i' \in R$ with
$[a] = [a']$ in $A$ and $[b_i] = [b_i']$ in $B$ for all $i$.
Then $a-a' \in (I, f_1, \dots , f_d)$ and
$b_i - b_i' \in (I_L, f_1, \dots , f_d)$, hence $T(b_i-b_i') \in
(TI_L, f_1, \dots , f_d)$. As a consequence
 \[
           [a + \sum_{i=1}^q b_i T^i ]      =           [a' + \sum_{i=1}^q b_i' T^i ]       
\]
in $C$.

$\phi$ is surjective: Obvious from the definitions of $\phi$ and $C$.

$\phi$ is injective:    Assume $a,b_i \in R$ with 
$[a+ \sum_{i=1}^q b_i T^i] = 0 $ in $C$. This implies that
there exist  
   $e_{a,1},  \dots , e_{a,r_1} \in I$,
   $e_{b,1},  \dots , e_{b,r_2} \in I_L$,
   $g_{a,1}, \dots , g_{a,r_1} \in  R[T]$,
   $g_{b,1}, \dots , g_{b,r_2} \in  R[T]$,
   $g_c \in R[T]$,  $ g_{e,1}, \dots ,g_{e,d} \in R[T]$
such that
\[
    a + \sum_{i=1}^q b_i T^i  = 
          \sum_{j=1}^{r_1} g_{a,j}e_{a,j} + 
         T \sum_{j=1}^{r_2} g_{b,j}e_{b,j} +
          g_c T^{q+1} +
         \sum_{j=1}^d  g_{e,j}f_j
\]
with equality in $R[T]$. Looking at the coefficients of
the powers of $T$ we get $a \in (I, f_1, \dots , f_d)$ and
$b _i \in  ( I_L, f_1, \dots , f_d)$  for all $1 \leq i \leq q$. 
Hence $\phi$ is injective.

Since $A,B$ are Artinian,  they are finite dimensional  
$k$-vector spaces. Since $\phi$ is surjective  $C$ is finite dimensional
as a $k$-vector space which implies that $C$ is Artinian.

The formula connecting the Hilbert functions of $A,B,C$
is an immediate consequence of the fact that $\phi$ is bijective.

Using \cite[Remark~5]{BP1}
it follows that   $\HF(C)$ is equal to the Hilbert function of
a general Artinian reduction of $k[D_{\sigma}]$.
As a consequence,
the statements  $C_{i} = 0$ for $i \geq d+1$ and $C_{d}$ is $1$-dimensional
follow from Lemma~\ref{lem!aboutGorensteinstar} applied to the Gorenstein*
simplicial complex $D_{\sigma}$.    \end {proof}

\begin{remark}    \label {rem!structureOfCSecondVersion}
Taking graded components, Lemma~\ref{lem!structureOfCno2} immediately implies 
that,  for  $i \geq 0$, there exists a $k$-vector space decomposition
\[
     C_i = A_i \oplus ( \oplus_{j=1}^q  B_{i-j}T^j ).
\]
Hence, if $c \in C_i$ there exist unique $a \in A_i$ and 
$b_j \in B_j$ such that  
\[
     c = a + b_{i-1}T + b_{i-2}T^2 + \dots + b_{i-q} T^q.
\]
\end {remark}

\begin {corollary}   \label{corol!HFofsectionofC}
  Assume  that the Hilbert function of a general linear section of $C$ is
  equal to the Hilbert function of a general linear section  of a general
 Artinian reduction of $k[D_{\sigma}]$. Then $C$ has the WLP if and only if
 $k[D_{\sigma}]$ has the WLP.
\end {corollary}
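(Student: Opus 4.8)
The plan is to reduce the statement entirely to the identity of Hilbert functions $\HF(C)=\HF(F)$, where $F$ denotes a general Artinian reduction of $k[D_{\sigma}]$, which is already recorded in Lemma~\ref{lem!structureOfCno2}, combined with the standard Hilbert-function characterization of the WLP for Artinian standard graded algebras recalled in Section~\ref{sec!general_notations}.

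First I would set up the two algebras to be compared. By Lemma~\ref{lem!aboutAandB} the ring $k[D_{\sigma}]$ is Gorenstein, in particular Cohen--Macaulay, and it has dimension $d\geq 1$; hence by the definition of the WLP for positive-dimensional algebras, $k[D_{\sigma}]$ has the WLP if and only if $F$ has the WLP, where $F=k[D_{\sigma}]/(g_1,\dots,g_d)$ for $d$ general linear elements. Both $C$ and $F$ are Artinian standard graded $k$-algebras ($C$ is Artinian by Lemma~\ref{lem!structureOfCno2}), and by the last assertion of Lemma~\ref{lem!structureOfCno2} they share the same Hilbert function. In particular $\Delta^{+}(\HF(C))=\Delta^{+}(\HF(F))$.

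Next I would apply the criterion from Section~\ref{sec!general_notations}: an Artinian standard graded $k$-algebra $E$ has the WLP if and only if, for general $\omega\in E_1$, one has $\HF(E/(\omega))=\Delta^{+}(\HF(E))$. Applied to $E=C$ this says that $C$ has the WLP precisely when the Hilbert function of a general linear section of $C$ equals $\Delta^{+}(\HF(C))$; applied to $E=F$ it says that $F$, equivalently $k[D_{\sigma}]$, has the WLP precisely when the Hilbert function of a general linear section of $F$ — that is, of a general linear section of a general Artinian reduction of $k[D_{\sigma}]$ — equals $\Delta^{+}(\HF(F))$.

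Finally I would combine the pieces. By the hypothesis of the corollary the Hilbert function of a general linear section of $C$ coincides with that of a general linear section of a general Artinian reduction of $k[D_{\sigma}]$, and by the second paragraph the two target functions $\Delta^{+}(\HF(C))$ and $\Delta^{+}(\HF(F))$ coincide as well. Hence the two WLP criteria are the very same equality of integer-valued functions, and $C$ has the WLP if and only if $k[D_{\sigma}]$ has the WLP. There is no genuine obstacle here beyond lining up the definitions; the only point deserving a moment of care is that, since $C$ and $F$ are Artinian, a ``general linear section'' is the quotient by a single general linear form, so that its Hilbert function is exactly the function $\HF(E/(\omega))$ appearing in the WLP criterion.
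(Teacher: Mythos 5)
Your proof is correct and follows essentially the same route as the paper: the paper's own (very terse) proof likewise invokes Lemma~\ref{lem!structureOfCno2} for $\HF(C)=\HF(F)$ and then appeals to the Hilbert-function characterization of the WLP, which you have simply spelled out in full. No gaps; the only extra care you took (Cohen--Macaulayness of $k[D_{\sigma}]$ via Lemma~\ref{lem!aboutAandB} so that its WLP is tested on $F$) is a legitimate and harmless elaboration of what the paper leaves implicit.
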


\begin {proof}   By Lemma~ \ref{lem!structureOfCno2}
  $\HF(C)$ is equal to the Hilbert function of
a general Artinian reduction of $k[D_{\sigma}]$. The result follows
 from the definition of WLP.
\end {proof}

\noindent Recall  $I_G =(I,  T^{q+1}- x_{\sigma},TI_L)$  and  $G = R[T]/(I_G+(f_1, \dots , f_d))$.

\begin {lemma}  \label{lem!structureOfG}
i)  The $k$-algebra $R[T]/I_G$ is Gorenstein, $\dim R[T]/I_G = d$ and 
$\HF(R[T]/I_G) = \HF(k[D_{\sigma}])$.

ii)  The ring $G$ is Artinian Gorenstein, and 
  $\HF (G) = \HF (C)$, which by Lemma~\ref {lem!structureOfCno2} is equal to the HF
 of a general Artinian reduction of $k[D_{\sigma}]$.

iii)  Assume $k[D_{\sigma}]$ has the WLP. Then both $R[T]/I_G$ and $G$ 
have the WLP.
\end {lemma}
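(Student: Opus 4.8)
The plan is to compute a convenient initial ideal of $I_G$ from the relation $I\subseteq I_L=(I:x_\sigma)$, and then to feed the result into the general lemmas of Section~\ref{subst!generalLemmasNeeded}. Fix a monomial order $\prec$ on $R[T]$ refining the weight that assigns weight $2$ to each $x_i$ and weight $1$ to $T$; since each $x_i$ is heavier than $T$, the $\prec$-leading term of $T^{q+1}-x_\sigma$ is $x_\sigma$. Writing $u_1,u_2,\dots$ for the minimal monomial generators of $I$ and $v_1,v_2,\dots$ for those of $I_L$, I claim that $\{u_i\}\cup\{T^{q+1}-x_\sigma\}\cup\{Tv_j\}$ is a Gröbner basis of $I_G$ with respect to $\prec$, so that $\operatorname{in}_\prec(I_G)=(I,x_\sigma,TI_L)=J_{st}$. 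Verifying Buchberger's criterion is routine: an $S$-polynomial of two $u_i$, or of two $Tv_j$, reduces to zero because the minimal monomial generators of a monomial ideal form a Gröbner basis; the $S$-polynomial of $u_i$ with $Tv_j$ vanishes; the $S$-polynomial of $T^{q+1}-x_\sigma$ with $Tv_j$ is a multiple of $Tv_j$; and the $S$-polynomial of $T^{q+1}-x_\sigma$ with $u_i$ is $x_{u_i\setminus\sigma}T^{q+1}$, which is a multiple of some $Tv_j$ because $u_i\cup\sigma\notin D$ forces $x_{u_i\setminus\sigma}\in I_L=(I:x_\sigma)$. Since passing to an initial ideal preserves the Hilbert function, this already gives $\HF(R[T]/I_G)=\HF(R[T]/J_{st})=\HF(k[D_\sigma])$, hence $\dim R[T]/I_G=\dim k[D_\sigma]=d$.

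This settles all of part~i) except the assertion that $R[T]/I_G$ is Gorenstein, for which I would invoke the interpretation of $R[T]/I_G$ as (essentially) the Kustin--Miller unprojection ring of \cite[Theorem~1.1]{BP1}, which is Gorenstein by unprojection theory. For part~ii) I would next pin down the $R$-module structure of $R[T]/I_G$ (here $R=k[x_1,\dots,x_n]$): the $R$-linear map
\[
  \Phi\colon (R/I)\oplus\bigoplus_{b=1}^{q}(R/I_L)(-b)\longrightarrow R[T]/I_G,
  \qquad ([a],[b_1],\dots,[b_q])\mapsto\Big[\,a+\sum_{b=1}^{q}b_bT^b\,\Big],
\]
is well defined because $TI_L\subseteq I_G$, and it is surjective because the relation $T^{q+1}=x_\sigma$ in $R[T]/I_G$ rewrites every power $T^N$ as $x_\sigma^{\lfloor N/(q+1)\rfloor}T^{\,N\bmod(q+1)}$. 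By Lemma~\ref{lem!aboutAandB} and part~i) the source and target of $\Phi$ have the same Hilbert function, so $\Phi$ is an isomorphism of graded $R$-modules. Since $R/I=k[D]$ and $R/I_L$ are Cohen--Macaulay of dimension $d$ (Lemma~\ref{lem!aboutAandB}), Lemma~\ref{lem!aboutGeneralLinearsAndDepth} shows $f_1,\dots,f_d$ is a regular sequence on each summand, hence on $R[T]/I_G$. Therefore $G$ is Artinian, it is Gorenstein as a quotient of the Gorenstein ring $R[T]/I_G$ by a regular sequence, and $\HF(G)=\Delta^{d}(\HF(R[T]/I_G))=\Delta^{d}(\HF(k[D_\sigma]))=\HF(C)$, the last equality by Lemma~\ref{lem!structureOfCno2}.

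For part~iii), assume $k[D_\sigma]$ has the WLP. The ring $R[T]/I_G$ is Cohen--Macaulay (being Gorenstein), and its initial ideal $\operatorname{in}_\prec(I_G)=J_{st}$ has the property that $R[T]/J_{st}=k[D_\sigma]$ is Cohen--Macaulay with the WLP, so Wiebe's Lemma~\ref{lem!aboutWLPofginWiebe} gives that $R[T]/I_G$ has the WLP. Then $\dim R[T]/I_G=d$, the ring $R[T]/I_G$ is Cohen--Macaulay with the WLP, and $\dim R[T]/(I_G+(f_1,\dots,f_d))=\dim G=0$ by part~ii), so Lemma~\ref{lem!generalLemmaForWLP} applies and yields that $G$ has the WLP. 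The main obstacle, apart from importing the Gorenstein property of $R[T]/I_G$ from \cite{BP1}, is the Gröbner-basis bookkeeping of the first paragraph identifying $J_{st}$ as an initial ideal of $I_G$; once that is in place, the rest is a direct assembly of the general lemmas.
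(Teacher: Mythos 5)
Your Gr\"obner-basis route is sound and is genuinely different from the paper's argument for parts i) and iii). The Buchberger check is correct: the only nontrivial $S$-pair is that of $T^{q+1}-x_\sigma$ (leading term $x_\sigma$ for your weight order) with a minimal generator $u$ of $I$, and it equals, up to sign, $x_{u\setminus\sigma}T^{q+1}$, which reduces to zero because $x_{u\setminus\sigma}\in(I:x_\sigma)=I_L$; hence $\operatorname{in}_{\prec}(I_G)=J_{st}$, which gives $\HF(R[T]/I_G)=\HF(k[D_\sigma])$ and $\dim R[T]/I_G=d$ at once, and then Wiebe's Lemma~\ref{lem!aboutWLPofginWiebe} transfers the WLP from $k[D_\sigma]=R[T]/J_{st}$ to $R[T]/I_G$, after which Lemma~\ref{lem!generalLemmaForWLP} handles $G$ exactly as in the paper. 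The paper instead realizes $R[T]/I_G$ (after rescaling $T$) as the member $H_1$ of a one-parameter family $H_c$ whose special fibre $H_0$ is $k[D_\sigma]$, built from auxiliary variables $z,z'$, and applies the openness Lemma~\ref{lem!3openproperties} twice (once for regularity of $z'-c^{q+1}T$, once for the WLP); it only records (in Lemma~\ref{lem!aboutIC}, whose proof in fact relies on the present lemma) that $I_C$ is an initial ideal of $I_G$ for revlex with $T$ largest. Your observation that $J_{st}$ itself is an initial ideal of $I_G$ for a suitable weight order is a nice shortcut that removes the family and the openness arguments, and creates no circularity since you never invoke Lemma~\ref{lem!aboutIC}. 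Your part ii) also refines the paper's vector-space map $\psi$ to an $R$-module isomorphism and then gets the regular sequence from Cohen--Macaulayness of $R/I$ and $R/I_L$; this is correct and parallel in spirit to the paper.

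The one genuine gap is the Gorenstein assertion in i). \cite[Theorem~1.1]{BP1}, as used in the paper, gives Gorensteinness of the unprojection-type ring $R[T,z]/(I,\,TI_L,\,Tz-x_\sigma)$ with $\deg z=q$, which has dimension $d+1$; the ring $R[T]/I_G$ is not ``essentially'' that ring but its quotient by the homogeneous element $z-T^q$, so Gorensteinness does not follow verbatim from unprojection theory --- you must know that $z-T^q$ is a nonzerodivisor, and arranging such a transfer is precisely what the paper's $\mathcal{Q}$ and $H_c$ construction is for. Fortunately your own computation closes the gap in one line: the ring $R[T,z]/(I,TI_L,Tz-x_\sigma)$ is Cohen--Macaulay of dimension $d+1$, hence unmixed, and its quotient by $z-T^q$ is $R[T]/I_G$, which you have shown has dimension $d$; therefore $z-T^q$ lies in no minimal prime, is a regular element, and the quotient of a Gorenstein ring by a homogeneous regular element is Gorenstein. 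With that step added your proof is complete.
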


\begin {proof}  Let $z,z'$ be two new variables and $c \in k$. We 
  set  $I_V = (I, T I_L, Tz- x_{\sigma}) \subset R[T,z]$ 
  where $\deg T = 1$, $\deg z = q$.
  We set $\mathcal {M} =  R[T,z,z']/(I_V)$, where
  $(I_V)$ is the ideal of  $R[T,z,z']$  generated by  $I_V$
  and $\deg T = \deg z' =  1$,  $\deg z =q $.  
  We also set  $\mathcal {Q} =  \mathcal{M}/(z - T^{q-1}z')$ and
  \[ 
      H_c =      \mathcal {Q}/  (z' - c^{q+1}T ) = R[T]/(I,  TI_L, c^{q+1}T^{q+1}- x_{\sigma}).
  \]

 By \cite[Theorem~1.1]{BP1}  $R[T,z]/I_V$ is Gorenstein and 
  $\dim R[T,z]/I_V = d+1$.
  It follows that  $\mathcal{M}$ is Gorenstein and
  $\dim \mathcal {M}  = d+2$.  Hence $\dim \mathcal {Q} \geq d+1$.
  Since $\mathcal{Q}/ (z') =  k[D_{\sigma}]$ which has dimension $d$, it follows
  that $\dim \mathcal {Q}  \leq d+1$. Hence $\dim \mathcal {Q} = d+1$.
   Using that $\mathcal{M}$ is Gorenstein, hence Cohen--Macaulay,
   it follows that $z - T^{q-1}z'$ is an 
   $\mathcal{M}$-regular element, hence $ \mathcal {Q}$ is Gorenstein.
   
Clearly 
  \[
     \mathcal {Q}  =  R[T,z']/ (I, T I_L, T^q z' - x_{\sigma}),
  \]
   hence $\mathcal {Q}$ is standard graded.  
  We have $H_0 = k[D_{\sigma}]$, hence  $\dim  \mathcal {Q}/ (z') =
     \dim \mathcal {Q} -1$. Since $\mathcal {Q}$ is Gorenstein, 
   hence Cohen--Macaulay,  it follows that $z'$ is  a
  $\mathcal {Q}$-regular element.

  Hence Lemma~\ref{lem!3openproperties} implies that
  for general $c \in k$ we have that $z'-c^{q+1}T$ is
  a $\mathcal {Q}$-regular element, since the property is true 
  for $c=0$.   As a consequence, for general $c \in k$ the ring
  $H_c$   is Gorenstein of dimension $d$ and $\HF(H_c) = \HF (k[D_{\sigma}])$.
 For nonzero $c$ using the linear change of coordinates $T \mapsto cT$ we can assume
 that $c = 1$.  Since $R[T]/I_G$ is isomorphic to  $H_1$, it follows that
 $R[T]/I_G$ is Gorenstein,  $\dim R[T]/I_G= d$ and 
$\HF(R[T]/I_G) = \HF(k[D_{\sigma}])$.

We now prove ii) We first prove that $G$ is Artinian.
The arguments used in the proof of Lemma~\ref{lem!structureOfCno2} also
give that there exists a well-defined surjective $k$-linear map of vector spaces
\[
  \psi :  A \oplus B^q  \to G,  \quad  ([a], [b_1], \dots , [b_q]) \mapsto
          [a + \sum_{i=1}^q b_i T^i ]       
\]
for   $a,b_i \in R$. Since $A,B$ have finite dimension as $k$-vector spaces
it follows that $G$ has finite dimension as a $k$-vector space, hence
it is Artinian.
Since we proved that $R[T]/I_G$ is Gorenstein,
hence Cohen--Macaulay, and of dimension $d$, 
$G$ Artinian implies that $f_1, \dots , f_d$ is a regular sequence
for $R[T]/I_G$. As a consequence, using that  we proved above that 
$\HF(R[T]/I_G) = \HF(k[D_{\sigma}])$ it follows that
\[
    \HF(G)  =\Delta^d (\HF(R[T]/I_G))  = \Delta^d (\HF(k[D_{\sigma}])  = \HF (C)
\]
with the last equality by Lemma~\ref{lem!structureOfCno2}. Hence 
$\dim_k G = \dim_k C$.
Using that by Lemma~\ref{lem!structureOfCno2}   
    $\dim_k C = (\dim_k A) + q (\dim_k B)$
we get that      $\dim_k G = (\dim_k A) + q (\dim_k B)$.
Since $\psi$ is surjective it follows that $\psi$ is bijective.

  We now prove iii). Assume $k[D_{\sigma}]$ has the WLP.  By  Lemma~\ref{lem!3openproperties}
 we have that   for general  $c \in k$ the algebra $H_c$ 
 has the WLP, since the property is true for $c=0$.  
 For nonzero $c$ using the linear change of coordinates $T \mapsto cT$ 
 we can assume  that $c = 1$. Hence $R[T]/I_G$ has the WLP.  Since by ii) $G$ is Artinian, 
 Lemma~\ref{lem!generalLemmaForWLP} implies 
 that $G$ has the WLP.
\end {proof}

Due to its length the proof of the following lemma will be given in Subsection~\ref{subs!proofoflemmaaboutIP}.

\begin {lemma}  \label{lem!aboutIC}
  The ring   $R[T]/I_C$ is Cohen--Macaulay with 
$\dim R[T]/I_C = d$ and $\HF(R[T]/I_C) = \HF ( k [D_{\sigma}])$.   
Moreover,  $I_C$ is 
the initial ideal of  $I_G$ with respect to the reverse lexicographic order
in $R[T]$ with $T > x_1 >\dots > x_n$. 
\end {lemma}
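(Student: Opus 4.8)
The plan is to establish the two assertions — the Cohen--Macaulay/dimension/Hilbert-function statement and the initial-ideal identification — in the opposite of the order in which they are phrased, since the first will follow cheaply once the second is in hand. First I would fix the reverse lexicographic order $\tau$ on $R[T]$ with $T > x_1 > \dots > x_n$ and prove the inclusion $I_C \subseteq in_\tau(I_G)$. This is the concrete, hands-on step: the three families of generators of $I_G$ are the squarefree monomials generating $I$, the binomial $T^{q+1}-x_\sigma$, and the monomials $Tm$ for $m$ ranging over a monomial generating set of $I_L$. The generators of $I$ are already monomials and lie in $I_G$, so they are their own initial terms. For $T^{q+1}-x_\sigma$, note that in revlex with $T$ largest, $T^{q+1}$ is the leading term (it is pure in the largest variable), so $T^{q+1} \in in_\tau(I_G)$. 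The monomials $Tm$ with $m \in I_L$ are again monomials in $I_G$, hence in $in_\tau(I_G)$. Therefore $I_C = (I, T^{q+1}, TI_L) \subseteq in_\tau(I_G)$.

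Next I would upgrade this to equality by a Hilbert-function comparison, which is the standard device: for any term order, $\dim_k (R[T]/I_G)_m = \dim_k (R[T]/in_\tau(I_G))_m$ for all $m$, so it suffices to show $\HF(R[T]/I_C) = \HF(R[T]/I_G)$ (an inclusion of homogeneous ideals together with equality of Hilbert functions forces equality of the ideals). Here I invoke Lemma~\ref{lem!structureOfG}(i), which already gives $\HF(R[T]/I_G) = \HF(k[D_\sigma])$ and $\dim R[T]/I_G = d$. So I am reduced to computing $\HF(R[T]/I_C)$ directly and checking it equals $\HF(k[D_\sigma])$. For this I would run the same vector-space decomposition argument as in Lemma~\ref{lem!structureOfCno2}: an element of $R[T]/I_C$ is represented, modulo $(I, T^{q+1}, TI_L)$, by $a + \sum_{i=1}^q b_i T^i$ with $a$ taken modulo $I$ and each $b_i$ taken modulo $I_L$; well-definedness and the fact that the pieces do not interact follow by reading off coefficients of powers of $T$ exactly as in that proof, giving a graded $k$-vector-space isomorphism $R[T]/I_C \cong (R/I) \oplus \bigoplus_{i=1}^q (R/I_L)\,T^i$. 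Taking Hilbert functions and comparing with the formula in Lemma~\ref{lem!aboutAandB} for $\HF(k[D_\sigma])$ in terms of $\HF(k[D])$ and $\HF(R/I_L)$ yields $\HF(R[T]/I_C) = \HF(k[D_\sigma]) = \HF(R[T]/I_G)$, hence $I_C = in_\tau(I_G)$ and $\dim R[T]/I_C = d$.

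Finally, Cohen--Macaulayness of $R[T]/I_C$: since $R[T]/I_C = R[T]/in_\tau(I_G)$ is the initial algebra of $R[T]/I_G$, and $R[T]/I_G$ is Cohen--Macaulay (it is Gorenstein by Lemma~\ref{lem!structureOfG}(i)), one concludes that $R[T]/I_C$ is Cohen--Macaulay — the depth of a quotient by a monomial initial ideal is at least the depth of the original quotient, and the dimensions agree, so equality of depth and dimension is inherited. (Alternatively, $I_C$ is a squarefree-plus-$T^{q+1}$ ideal whose Cohen--Macaulayness can be read combinatorially, but the initial-algebra route is cleaner given what is already available.) I expect the main obstacle to be the bookkeeping in the vector-space decomposition step — specifically, verifying carefully that no cancellation among the three generator families of $I_C$ occurs across different $T$-degrees, which is why I would mirror the coefficient-extraction argument of Lemma~\ref{lem!structureOfCno2} rather than reprove it from scratch; everything else is a routine application of the inclusion-plus-Hilbert-function and initial-algebra-preserves-Cohen--Macaulay principles.
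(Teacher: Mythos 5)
Your identification $I_C = in_\tau(I_G)$ and your Hilbert function computation are sound: the inclusion $I_C \subseteq in_\tau(I_G)$ is exactly as you say, the coefficient-extraction decomposition $R[T]/I_C \cong R/I \oplus \bigoplus_{i=1}^q (R/I_L)(-i)$ does work (it is the argument of Lemma~\ref{lem!structureOfCno2} without the $f_i$), and combined with Lemma~\ref{lem!aboutAandB} and Lemma~\ref{lem!structureOfG}(i) it gives $\HF(R[T]/I_C)=\HF(k[D_\sigma])=\HF(R[T]/I_G)$, hence equality of $I_C$ with the initial ideal and $\dim R[T]/I_C=d$. This is in fact slightly more direct than the paper, which obtains the Hilbert function of $R[T]/I_C$ as $\Gamma^d(\HF(C))$ and therefore needs Cohen--Macaulayness first.

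The genuine gap is your Cohen--Macaulayness step. The asserted inequality ``the depth of a quotient by a monomial initial ideal is at least the depth of the original quotient'' is the reverse of the truth: under a Gr\"obner degeneration the graded Betti numbers can only increase, so by Auslander--Buchsbaum $\depth R[T]/in_\tau(J) \leq \depth R[T]/J$, while dimensions agree. Thus Cohen--Macaulayness descends from the initial ideal to the original ideal, never the other way, and knowing that $R[T]/I_G$ is Gorenstein gives you nothing about $R[T]/I_C$. (The paper itself depends on this directionality: Lemma~\ref{lem!aboutWLPofginWiebe} must \emph{assume} that $R/in_\tau(J)$ is Cohen--Macaulay even though $R/J$ is, and Lemma~\ref{lem!technicallemma2} uses the Cohen--Macaulayness of $R[T]/I_C$ as an input precisely because it is not automatic.) Your parenthetical ``read it combinatorially'' is where the real work lies, and it is not routine since $I_C$ is not squarefree because of $T^{q+1}$. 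The paper handles this by introducing the auxiliary squarefree ring $\mathcal{H}=k[x_1,\dots,x_n,T,z]/(I,Tz,TI_L)$, whose complex is the union of $2^{\{z\}}*D$ and $2^{\{T,1,\dots,q+1\}}*L$ glued along the Cohen--Macaulay complex $2^{\{1,\dots,q+1\}}*L$ of one lower dimension; a gluing lemma (Hibi) gives that $\mathcal{H}$ is Cohen--Macaulay of dimension $d+1$, and after regrading $z$ to degree $q$ one has $R[T]/I_C\cong \mathcal{H}/(z-T^q)$ with $z-T^q$ regular, which yields Cohen--Macaulayness and $\dim R[T]/I_C=d$. Without an argument of this kind (or some other direct proof that $R[T]/I_C$ is Cohen--Macaulay), your proof of the first assertion of the lemma does not go through.
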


\begin {lemma}  \label{lem!technicallemma1}
   $k[D]$  (resp. $k[L]$, resp. $R[T]/I_C$)  has the WLP
    if and only if $A$ (resp. $B$, resp. $C$) has the WLP.
\end {lemma}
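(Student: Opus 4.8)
The plan is to treat all three assertions uniformly as instances of the following principle: if $S$ is a standard graded Cohen--Macaulay $k$-algebra of dimension $d$ and $E$ is an Artinian reduction of $S$ obtained by killing a regular sequence of $d$ linear forms which are general subject only to living in a fixed polynomial subring, then $S$ has the WLP if and only if $E$ does. The three cases correspond to $(S,E) = (k[D],A)$, $(S,E)=(R/I_L,B)$, and $(S,E)=(R[T]/I_C,C)$. For the first two, the reducing linears $f_1,\dots,f_d$ are honestly general (since $A$ is a general Artinian reduction of $k[D]$ by Lemma~\ref{lem!aboutAandB}, and $B$ is isomorphic to a general Artinian reduction of $k[L]$ via Equation~(\ref{eqn!relatingLinksDefinitions})), so the equivalence is immediate from the very definition of what it means for a standard graded algebra to have the WLP. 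Thus the only case requiring work is $(S,E) = (R[T]/I_C,C)$, where the $f_i$ do not involve $T$ and so $C$ is \emph{not} a general Artinian reduction of $R[T]/I_C$.

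First I would record the structural facts available: by Lemma~\ref{lem!aboutIC}, $R[T]/I_C$ is Cohen--Macaulay of dimension $d$ with $\HF(R[T]/I_C) = \HF(k[D_\sigma])$; by Lemma~\ref{lem!structureOfCno2}, $C$ is Artinian with Hilbert function equal to that of a general Artinian reduction of $k[D_\sigma]$, and $C_i = 0$ for $i \geq d+1$ with $C_d$ one-dimensional. Since $R[T]/I_C$ is Cohen--Macaulay of dimension $d$ and $C$ is Artinian, the $d$ forms $f_1,\dots,f_d$ form a regular sequence on $R[T]/I_C$, so $\HF(C) = \Delta^d(\HF(R[T]/I_C))$; combined with the Hilbert function identity from Lemma~\ref{lem!aboutIC} this shows $\HF(C) = \Delta^d(\HF(k[D_\sigma]))$, matching the Hilbert function of a general Artinian reduction of $k[D_\sigma]$, consistently with Lemma~\ref{lem!structureOfCno2}.

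For the direction ``$R[T]/I_C$ has the WLP $\Rightarrow$ $C$ has the WLP'', I would invoke Lemma~\ref{lem!generalLemmaForWLP} directly, with $J = I_C \subset R[T]$: the hypotheses are exactly that $R[T]/I_C$ is Cohen--Macaulay of dimension $d$ with the WLP, that $f_1,\dots,f_d$ are general linears in $R$, and that $\dim R[T]/(I_C + (f_1,\dots,f_d)) = 0$, the last because $C$ is Artinian; the conclusion is that $C = R[T]/(I_C + (f_1,\dots,f_d))$ has the WLP. For the converse direction ``$C$ has the WLP $\Rightarrow$ $R[T]/I_C$ has the WLP'', the point is that a general Artinian reduction $E'$ of $R[T]/I_C$ is obtained by killing $d$ fully general linears of $R[T]$, and one may take these to be $f_1,\dots,f_{d-1}, f_d + T$ after a general linear change of coordinates; then $E' = C/(\bar f_d + T)$ where $\bar f_d + T$ is, up to a nonzero scalar, a general linear element of $C$, so by the Hilbert function computation above $\HF(E') = \Delta^+(\HF(C))$ follows from $C$ having the WLP (using the criterion in Section~\ref{sec!general_notations} that an Artinian algebra $F$ has WLP iff $\HF(F/(\omega)) = \Delta^+(\HF(F))$ for general $\omega$), whence $E'$ and therefore $R[T]/I_C$ has the WLP. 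The main obstacle is making precise the claim that the ideal generated by $f_1,\dots,f_{d-1},f_d+T$ in $R[T]$ is the ideal generated by $d$ general linear elements of $R[T]$ — this is the same bookkeeping already used inside the proof of Lemma~\ref{lem!generalLemmaForWLP}, so I would simply cite that argument rather than repeat it.
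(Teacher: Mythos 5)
Your reduction of the statement to the case $(R[T]/I_C,\,C)$, your handling of the cases $(k[D],A)$ and $(k[L],B)$ as immediate from the definition of the WLP for positive-dimensional algebras, and your use of Lemma~\ref{lem!generalLemmaForWLP} for the implication ``$R[T]/I_C$ WLP $\Rightarrow$ $C$ WLP'' all agree with the paper. The gap is in the converse implication. You declare that a general Artinian reduction $E'$ of $R[T]/I_C$ is the quotient by the $d$ forms $f_1,\dots,f_{d-1},f_d+T$ and then write $E'=C/(\overline{f_d+T})$; these are different rings: $C$ is already the quotient by all of $f_1,\dots,f_d$, so $C/(\overline{f_d+T})=R[T]/(I_C+(f_1,\dots,f_d,T))$ is a quotient by $d+1$ linear forms (one of which is $T$ itself), not by the $d$ forms $f_1,\dots,f_{d-1},f_d+T$. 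Moreover, even with a correct identification of $E'$, computing $\HF(E')$ proves nothing about the WLP of $E'$: what must be verified is that for a general linear form $\omega$ the multiplication maps on $E'$ have maximal rank, equivalently $\HF(E'/(\omega))=\Delta^+(\HF(E'))$, and this involves $d+1$ general linear forms of $R[T]$ in total. Note that $\HF(E')=\Delta^d(\HF(R[T]/I_C))=\HF(C)$ holds automatically, since the reducing forms are a regular sequence on the Cohen--Macaulay ring $R[T]/I_C$, with no WLP hypothesis at all; so the final step ``whence $E'$ and therefore $R[T]/I_C$ has the WLP'' is a non sequitur as written.

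The repair is essentially the computation inside Lemma~\ref{lem!generalLemmaForWLP} run in reverse, which is what the paper's (terse) proof is implicitly doing after observing that $f_1,\dots,f_d$ is an $R[T]/I_C$-regular sequence: the quotient of $R[T]/I_C$ by $d+1$ general linear forms of $R[T]$ can be realized as $C/(\overline{f_{d+1}+T})$, because $(f_1,\dots,f_d,f_{d+1}+T)$ is the ideal generated by $d+1$ general linear forms of $R[T]$ and $\overline{f_{d+1}+T}$ is, up to a nonzero scalar, a general element of $C_1$. The WLP of $C$ then gives $\HF(C/(\overline{f_{d+1}+T}))=\Delta^+(\HF(C))=\Delta^+(\Delta^d(\HF(R[T]/I_C)))$, which says exactly that the general Artinian reduction of $R[T]/I_C$, whose Hilbert function is $\Delta^d(\HF(R[T]/I_C))$, has the WLP; combined with the Cohen--Macaulayness from Lemma~\ref{lem!aboutIC} this yields the WLP of $R[T]/I_C$. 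So your instinct to exploit the generality bookkeeping is the right one, but as written the key step both misidentifies the reduction and stops one linear form short of what the WLP requires.
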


\begin {proof}
  Since  $A$ is a general Artinian reduction of the Gorenstein $k[D]$ 
  it is immediate that $A$ has the WLP if and only if $k[D]$ has the WLP.
  Since  $B$ is a general Artinian reduction of the Gorenstein $k[L]$ 
  it is immediate that $B$ has the WLP if and only if $k[L]$ has the WLP.

   By Lemma~\ref {lem!aboutIC}  $R[T]/I_C$ is Cohen--Macaulay of dimension $d$. 
   Since by Lemma~\ref{lem!structureOfCno2}  
$C$ is Artinian, it follows that $f_1, \dots , f_d$ is an $R[T]/I_C$-regular sequence.
   Hence $C$ WLP implies $R[T]/I_C$ WLP.  Conversely, assume that $R[T]/I_C$ has the WLP. 
The result that
   $C$ has the WLP follows by Lemma~\ref{lem!generalLemmaForWLP}.
\end {proof}     

\begin {lemma}  \label{lem!technicallemma2}
 Assume   $R[T]/I_C$ has the WLP. Then  $k[D_{\sigma}]$ has the WLP. 
 \end {lemma}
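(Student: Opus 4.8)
The plan is to move the WLP up the chain $R[T]/I_C \rightsquigarrow R[T]/I_G \rightsquigarrow G$ by a Gröbner--degeneration argument and then to descend to $k[D_\sigma]$ by a Hilbert function comparison. First, by Lemma~\ref{lem!aboutIC} the ring $R[T]/I_C$ is Cohen--Macaulay and $I_C$ is the initial ideal of $I_G$ with respect to the reverse lexicographic order with $T>x_1>\dots>x_n$, while by Lemma~\ref{lem!structureOfG}(i) the ring $R[T]/I_G$ is Gorenstein, in particular Cohen--Macaulay; since by hypothesis $R[T]/I_C$ has the WLP, Wiebe's Lemma~\ref{lem!aboutWLPofginWiebe} applies and shows that $R[T]/I_G$ has the WLP. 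Next, I would apply Lemma~\ref{lem!generalLemmaForWLP} with $J=I_G$ --- legitimate because $R[T]/I_G$ is Cohen--Macaulay of dimension $d$ and has the WLP by the previous step, the $f_i$ are general linear forms of $R$, and by Lemma~\ref{lem!structureOfG}(ii) the ring $G=R[T]/(I_G+(f_1,\dots,f_d))$ is Artinian, so $\dim R[T]/(I_G+(f_1,\dots,f_d))=0$ --- and conclude that $G$ has the WLP.

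It then remains to descend from $G$ to $k[D_\sigma]$. Here $G$ is Artinian Gorenstein (Lemma~\ref{lem!structureOfG}(ii)) with $\HF(G)=\HF(C)$, which by Lemma~\ref{lem!structureOfCno2} equals the Hilbert function of a general Artinian reduction of $k[D_\sigma]$; moreover, by Lemma~\ref{lem!technicallemma1} and the hypothesis, $C$ itself has the WLP. I would finish via Corollary~\ref{corol!HFofsectionofC}, for which it suffices to check that a general linear section of $C$ has the same Hilbert function as a general linear section of a general Artinian reduction of $k[D_\sigma]$. Eliminating $T$ (the $T$-coefficient of a general linear form being invertible, cf.\ Remark~\ref{rem!structureOfCSecondVersion}), one finds that a general linear section of $C$ is isomorphic to $A/\big((\ell^{q+1})+\ell\,\overline{I_L}\big)$, a general linear section of $G$ to $A/\big((\ell^{q+1}-x_\sigma)+\ell\,\overline{I_L}\big)$, and a general linear section of a general Artinian reduction of $k[D_\sigma]$ to $A/\big((x_\sigma)+\ell\,\overline{I_L}\big)$, where $A$ is a general Artinian reduction of $k[D]$, $\overline{I_L}$ is the image of $I_L$ in $A$, and $\ell\in A_1$ is general. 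Since $C$ and $G$ have the WLP, the first two have Hilbert function $\Delta^{+}(\HF(C))$; using the colon relation $x_\sigma I_L\subseteq I$ (so that $x_\sigma\,\overline{I_L}=0$ in $A$) one deduces that the third one does as well, and Corollary~\ref{corol!HFofsectionofC} then yields that $k[D_\sigma]$ has the WLP.

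The main obstacle is this last step. The rings $R[T]/I_G$ and $G$ are only flat deformations of $k[D_\sigma]$ and of its general Artinian reduction, with the same Hilbert functions, and the WLP locus in such a family is merely Zariski open, so one cannot transfer the WLP to the special (monomial) fibre by semicontinuity alone; one has to exploit the explicit decompositions of $C$ and $G$ in terms of the Artinian reductions of $k[D]$ and of $k[L]$ (Lemma~\ref{lem!structureOfCno2}, Lemma~\ref{lem!structureOfG}) together with the definition of $I_L$ in order to compute the Hilbert function of the relevant general linear section of $k[D_\sigma]$ exactly. Everything preceding it is a routine assembly of the already established lemmas.
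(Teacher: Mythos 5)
Your reduction of the problem to a Hilbert function statement is structurally fine: by Lemma~\ref{lem!technicallemma1} (or Lemma~\ref{lem!generalLemmaForWLP}) the hypothesis gives that $C$ has the WLP, the identifications of the general linear sections of $C$, of $G$ and of a general Artinian reduction of $k[D_\sigma]$ with $A/\bigl((\ell^{q+1})+\ell\overline{I_L}\bigr)$, $A/\bigl((\ell^{q+1}-x_\sigma)+\ell\overline{I_L}\bigr)$ and $A/\bigl((x_\sigma)+\ell\overline{I_L}\bigr)$ are essentially correct, and Corollary~\ref{corol!HFofsectionofC} would indeed finish the proof once one knows $\HF\bigl(A/((x_\sigma)+\ell\overline{I_L})\bigr)=\Delta^{+}(\HF(C))$. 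The gap is that this last equality is not ``deduced'' from the colon relation $x_\sigma\overline{I_L}=0$: since the inequality $\HF(F/\omega)\geq\Delta^{+}(\HF(F))$ holds for every Artinian $F$ and every linear form, the equality you need is \emph{exactly} the statement that the general Artinian reduction of $k[D_\sigma]$ has the WLP, i.e.\ it is the full content of the lemma. The relation $x_\sigma\overline{I_L}=0$ only bounds $\dim x_\sigma A_{m-q-1}$ from above, which is the wrong direction (you need a lower bound on $\dim\bigl((x_\sigma)+\ell\overline{I_L}\bigr)_m$), and the obvious degeneration $x_\sigma-t\ell^{q+1}$ from the $G$--type ideal to the $x_\sigma$--type ideal also goes the wrong way: specialization can only increase the Hilbert function of the quotient, as you yourself observe when dismissing semicontinuity. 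So as written the crucial step is asserted, not proved. Note also that the detour $R[T]/I_C\rightsquigarrow R[T]/I_G\rightsquigarrow G$ via Wiebe and Lemma~\ref{lem!generalLemmaForWLP}, while correct, buys you nothing for the descent, precisely because the WLP cannot be pushed from the deformed ring back to the special fibre.

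What is missing is the paper's key observation, which bypasses all of this: apply the linear automorphism $\phi$ of $R[T]$ with $x_i\mapsto x_i+T$ for $i\in\sigma$ and $T\mapsto T$. Then $I_C=(I,T^{q+1},TI_L)$ is contained in the initial ideal of $\phi(J_{st})$ for the reverse lexicographic order with $T>x_1>\dots>x_n$, and since $\HF(R[T]/I_C)=\HF(k[D_\sigma])$ (Lemma~\ref{lem!aboutIC}) it \emph{equals} that initial ideal; Wiebe's Lemma~\ref{lem!aboutWLPofginWiebe} then transfers the WLP from $R[T]/I_C$ directly to $R[T]/\phi(J_{st})\cong k[D_\sigma]$, with no mention of $I_G$, $G$ or section Hilbert functions. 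Alternatively, your Hilbert-function route can be repaired, but only by invoking Conca's theorem on generic linear sections of an ideal versus its initial ideal, as in Proposition~\ref{prop!DiscussionAboutPaPb}, which yields the missing inequality $\HF\bigl(A/((x_\sigma)+\ell\overline{I_L})\bigr)\leq\HF\bigl(A/((\ell^{q+1})+\ell\overline{I_L})\bigr)$ --- and that argument again rests on the same fact that $I_C$ is an initial ideal of $\phi(J_{st})$, which your proposal never establishes.
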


\begin {proof}
    We argue in a very similar way to   \cite [Proposition~2.2]  {Mu1}. 
     Assume $R[T]/I_C$ has the WLP. 
    The ordering we use in the polynomial ring $R[T]$ is the reverse lexicographic 
    ordering with  $T > x_1 > x_2 > \dots > x_n$.

   Consider the $k$-algebra
    automorphism $\phi$ of $R[T]$ defined by $T \mapsto T, x_i \mapsto x_i+T$ for 
   $1 \leq i \leq q+1$   and $x_i \mapsto x_i$ for $q+2 \leq i \leq n$. 
    We claim that $I_C$ is the initial  ideal
    of $\phi (J_{st})$. Indeed, it is clear that $I_C$ is a subset of 
    the initial  ideal    of $\phi (J_{st})$ and by 
    Lemma~\ref{lem!aboutIC} $\HF(R[T]/I_C) = \HF ( k [D_{\sigma}])$.   
    It follows by Lemma~\ref {lem!aboutWLPofginWiebe} that 
    $R[T]/\phi(J_{st})$ 
     has the WLP, hence also $R[T]/J_{st} = k[D_{\sigma}]$ has the WLP. 
\end {proof}

The following lemma will be used in the proof of
 Lemma~\ref{lem!technicallemma3}.
 Since $C$ is not Gorenstein, it does not follow
 from   Remark~\ref{rem!poincare_duality_for_gor}. 

\begin {lemma}  \label{lem!technicallemma4}
    Assume $1 \leq i < p_1$ and $0 \not= c \in C_i$. Then there exists
  $c'  \in C_{p_1 -i}$ such that  $cc'  \not= 0$ in $C_{p_1}$. 
    As a corollary, assume  $\omega \in C_1$ is any element, not
necessarily general.
  If the multiplication by $\omega$ map $C_{p_1} \to C_{p_1+1}$
 is injective, it follows that for all $i$ with $1 \leq i  \leq p_1$
 the multiplication by $\omega$ map $C_i \to C_{i+1}$ is injective.
\end {lemma}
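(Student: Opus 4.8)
The plan is to establish the first assertion — the analogue of Poincaré duality for $C$ in the range $1 \le i < p_1$ — and then deduce the ``propagation of injectivity'' corollary by a short formal argument. For the first part I would use the decomposition of Remark~\ref{rem!structureOfCSecondVersion}, writing $c = a + b_{i-1}T + \dots + b_{i-q}T^q$ with $a \in A_i$ and $b_j \in B_j$, and reduce the statement to the known perfect pairings on the Gorenstein algebras $A$ (top degree $d$) and $B$ (top degree $d-q-1$), via Remark~\ref{rem!poincare_duality_for_gor}. Concretely, I expect to split into cases according to which homogeneous component of $c$ is nonzero. If $a \ne 0$ in $A_i$, then since $i < p_1 \le d$ there is $a' \in A_{p_1-i}$ with $aa' \ne 0$ in $A_{p_1}$; one must check that multiplying $c$ by (a lift of) $a'$, which lands in $C_{p_1}$, still has nonzero $A_{p_1}$-component, i.e. that the cross terms $b_j T^j \cdot a'$ do not interfere — and indeed by the decomposition of $C_{p_1}$ the $A_{p_1}$-component of $ca'$ is exactly $aa' \ne 0$, because each $b_j T^j a'$ lies in $\oplus_{j\ge 1} B_{\bullet}T^j$. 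If instead $a = 0$ but some $b_{i-m} \ne 0$ in $B_{i-m}$ (take $m$ maximal, say), then $i-m \le i - 1 \le d-q-2 < d-q-1$, so by Gorenstein duality for $B$ there is $b' \in B_{(d-q-1)-(i-m)}$ with $b_{i-m}b' \ne 0$ in $B_{d-q-1}$. The element $c' = b' T^{q-m}$ has degree $(d-q-1)-(i-m) + (q-m) = d-1-i$; here I would need $d-1-i = p_1 - i$, i.e. $d = p_1 + 1$, which holds only when $d$ is odd. So this naive choice works in the odd case; in the even case ($d = 2p_1+2$) the target degree $d-1-i$ overshoots $p_1-i$ by one, and I would instead multiply by $b'\,\omega\, T^{q-m}$ or adjust the exponent of $T$ downward — the point being that $B_\bullet$ is standard graded, so from a nonzero element of $B_{i-m}$ one reaches a nonzero element of $B_{p_1 - i + q - m + (\text{shift})}$ for a suitable intermediate degree, landing the product in $C_{p_1}$ with nonzero $B_\bullet T^{q-m}$-component. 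The main obstacle is exactly this bookkeeping: tracking degrees in the mixed decomposition $C_{p_1} = A_{p_1} \oplus (\oplus_{j=1}^q B_{p_1-j}T^j)$ to be sure the witness $c'$ has the right degree and that the relevant component of $cc'$ is genuinely nonzero, with the even/odd dichotomy handled uniformly (one can phrase it as: pick the largest $j$ with $b_{i-j} \ne 0$, then find $c'$ proportional to a power of $T$ times an element of $B$ reaching the socle of $B$ after the appropriate total degree shift).

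For the corollary, suppose $\omega \in C_1$ is arbitrary and the map $C_{p_1} \to C_{p_1+1}$ given by multiplication by $\omega$ is injective. Fix $i$ with $1 \le i \le p_1$ and suppose $0 \ne c \in C_i$ with $\omega c = 0$. If $i = p_1$ this is immediate. If $i < p_1$, apply the first part to obtain $c' \in C_{p_1 - i}$ with $c c' \ne 0$ in $C_{p_1}$; then $c c'$ is a nonzero element of $C_{p_1}$ killed by $\omega$ (since $\omega(cc') = (\omega c)c' = 0$), contradicting injectivity of $\omega$ on $C_{p_1}$. Hence $\omega$ is injective on $C_i$ for every $i \le p_1$. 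This second step is routine once the duality statement is in hand.
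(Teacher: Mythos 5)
Your use of the decomposition of Remark~\ref{rem!structureOfCSecondVersion}, your treatment of the case $a \neq 0$ (cross terms cannot cancel the $A_{p_1}$-component), and the formal deduction of the corollary all match the paper and are fine. The genuine gap is in the case where the nonzero part of $c$ is some $b_{i-m}T^m$. You pair $b_{i-m}$ all the way into the socle $B_{d-q-1}$ and propose $c' = b'T^{\,q-m}$, of degree $(d-q-1)-(i-m)+(q-m)=d-1-i$. For this to lie in $C_{p_1-i}$ you would need $d-1=p_1$, which essentially never holds: for $d$ odd, $p_1=(d-1)/2$, so your claim that the odd case works rests on the arithmetic slip ``$d=p_1+1$''. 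Thus the witness has the wrong degree in \emph{both} parities (it overshoots by roughly $d/2$), and the suggested repairs (``multiply by $b'\omega T^{\,q-m}$'', ``adjust the exponent of $T$ downward'', ``reach a suitable intermediate degree since $B$ is standard graded'') are not carried out; in a standard graded algebra one cannot in general push a nonzero element up to a prescribed degree, and that nonvanishing is precisely the point at issue. (Also, your inequality $i-1\le d-q-2$ is unjustified, since $i$ may exceed $d-q-1$ when $p_1>d-q-1$; what you actually need, and what is true, is only $i-m\le d-q-1$, which follows from $b_{i-m}\neq 0$.)

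The missing idea is that the target is degree $p_1$, not the top degree $d$, so one must pair inside $B$ into an \emph{intermediate} degree via the partial form of Remark~\ref{rem!poincare_duality_for_gor}, and this splits into the dichotomy the paper uses. If $p_1\le d-q-1$, choose $\beta$ of degree $p_1-i$ with $b_{i-m}\beta\neq 0$ in $B_{p_1-m}$ and take $c'=\beta$ (the slot $T^m$ of $cc'$ is then nonzero). If $p_1>d-q-1$, such a $\beta$ need not exist because $B_{p_1-m}$ may vanish; but then $d-p_1-1<q$ forces $p_1<q$ for either parity of $d$, so $0<m+(p_1-i)\le p_1<q$ and $c'=T^{\,p_1-i}$ works outright, the slot $T^{\,m+p_1-i}$ of $cc'$ being $b_{i-m}\neq 0$. (One can also phrase this uniformly: take $c'=\beta T^{t}$ with $\max\bigl(0,\,p_1-m-(d-q-1)\bigr)\le t\le\min\bigl(p_1-i,\,q-m\bigr)$, which is a nonempty range because $i-m\le d-q-1$ and $p_1\le d-1$, and choose $\beta\in R_{p_1-i-t}$ by duality in $B$ so that $b_{i-m}\beta\neq 0$ in $B_{p_1-m-t}$.) Without isolating the case $p_1>d-q-1$ and the consequence $p_1<q$, the ``suitable intermediate degree'' you appeal to need not exist, so as written the first assertion of the lemma is not proved.
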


\begin {proof}  
Using  Remark~\ref{rem!structureOfCSecondVersion}  write 
   \[ 
       c = a_i + \sum_{j=1}^q b_{i-j} T^j 
  \]
  (equality in $C$)  with $a_i \in A_i$ and $b_{i-j} \in B_{i-j}$ for all $1 \leq j \leq q$.
  Since $c \not= 0$, we have $a_i \not= 0$ or $b_{i-e} \not= 0$ for some $e$
  with $1 \leq e \leq \min \{i,q\} $.

  Assume first that $a_i \not= 0$.  By Lemma~\ref{lem!aboutAandB}
  $A$ is Artinian Gorenstein
  with $A_{d} \not= 0$.  
  Hence by  Remark~\ref{rem!poincare_duality_for_gor}  
  there exists  $a \in A_{p_1-i}$ such that
  $a_i a \not= 0  \in A_{p_1}$.  
  Hence $cc'  \not= 0$ in $C_{p_1}$, where $c' = a$.

  For the rest of the argument we  assume that there
  exists $e$ with $1 \leq e \leq \min \{i,q\}$ such that $b_{i-e} \not= 0$ in $B$.
  We have two cases:

  First Case:  We assume  $p_1 \leq d-q-1$. 
   By   Lemma~\ref{lem!aboutAandB}  $B$ is Artinian Gorenstein
  with $B_{d-q-1} \not= 0$.
   By  Remark~\ref{rem!poincare_duality_for_gor}  
  there exists  $b \in R_{p_1-i}$ such that $b_{i-e} b  \not= 0$ in $B_{p_1-e}$. 
  Hence $cc'  \not= 0$ in $C_{p_1}$, where $c' = b$.

    Second Case:  We assume $d-q-1 < p_1$.   
    Hence  $d-p_1-1 < q$.  
    If  $d$ is even, since $d =2p_1+ 2$ we get $p_1 + 1 < q$.
    If  $d$ is odd, since $d =2p_1+ 1$ we get $p_1  < q$.
    Hence no matter if $d$ is even or odd  we have  $ p_1 < q$.
     Since $e \leq i$ and $i < p_1$ we get     $0 < p_1 - i < q$ and
     $0 < p_1 + e -i  < q$. As a consequence 
     $ 0 \not=        b_{i-e}T^{e}   T^{p_1-i}$ in $C_{p_1}$.
    Hence $cc'  \not= 0$ in $C_{p_1}$, where $c' = T^{p_1-i}$.
 
   We now prove the corollary. We assume  $1 \leq i < p_1$,  
   that the multiplication by $\omega$ map $C_{p_1} \to C_{p_1+1}$
   is injective and  that 
   the multiplication by $\omega$ map $C_i \to C_{i+1}$ is  not
   injective and we will get a contradiction. By the assumptions
    there exists $0 \not= c \in C_i$ such that $\omega c =0$
    in $C_{i+1}$.   By the first part of the present lemma there  exists
    $c' \in C_{p_1 -i}$ such that  $cc' \not= 0$ in
    $C_{p_1}$. Hence by the assumptions $ \omega cc' \not= 0$ in $C_{p_1+1}$,
   which contradicts $\omega c = 0$ in $C_{i+1}$.
\end {proof}

The ring $C$ is not Gorenstein, hence we can not use
Lemma~\ref{lem!wlpinthegorensteincase}.  
The following lemma is a substitute.

\begin {lemma}   \label{lem!technical_lemma_57}
 The following are equivalent

i)  $C$ has the WLP.

ii)  For general $\omega \in R_1$ the multiplication by $\omega +T$ map
    $C_{p_1} \to C_{p_1+1}$ is injective and the multiplication 
   by $\omega +T$ map $C_{p_2} \to C_{p_2+1}$
    is surjective.
\end {lemma}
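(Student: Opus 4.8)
The plan is to run the entire argument through the single linear form $\ell=\omega+T\in C_1$, for a suitably chosen $\omega\in R_1$, exploiting that $p_2\le p_1+1$ in all cases (indeed $p_2=p_1+1$ for $d$ even and $p_2=p_1$ for $d$ odd). First I would record the structural facts needed: by Lemma~\ref{lem!structureOfCno2} (or Lemma~\ref{lem!structureOfG}(ii)) $C$ is a standard graded Artinian $k$-algebra whose Hilbert function agrees with that of an Artinian Gorenstein algebra, so $\dim_k C_i=\dim_k C_{d-i}$ for all $i$; and under the decomposition $C_1=A_1\oplus kT$ of Remark~\ref{rem!structureOfCSecondVersion} the form $\ell=\omega+T$ has $T$-coefficient $1$, hence is nonzero in $C_1$.

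For (ii)$\Rightarrow$(i): I would fix a general $\omega\in R_1$ so that multiplication by $\ell=\omega+T$ is injective on $C_{p_1}\to C_{p_1+1}$ and surjective on $C_{p_2}\to C_{p_2+1}$. The corollary part of Lemma~\ref{lem!technicallemma4}, applied to the (not necessarily general) element $\ell$, then upgrades injectivity at degree $p_1$ to injectivity of $C_i\to C_{i+1}$ for every $1\le i\le p_1$; the map $C_0\to C_1$ is injective since $\ell\ne0$. For the complementary range I would observe that surjectivity of $C_{p_2}\to C_{p_2+1}$ says $(C/\ell C)_{p_2+1}=0$, and since $C/\ell C$ is standard graded this forces $(C/\ell C)_m=0$ for all $m\ge p_2+1$, i.e.\ $C_i\to C_{i+1}$ is surjective for every $i\ge p_2$. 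Because $p_2\le p_1+1$, the ranges $\{0,\dots,p_1\}$ and $\{p_2,p_2+1,\dots\}$ cover all degrees, so multiplication by $\ell$ has maximal rank everywhere and $C$ has the WLP by \cite[Lemma 4.1]{BN}.

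For (i)$\Rightarrow$(ii): assuming $C$ has the WLP, the set $U\subseteq C_1$ of linear forms whose multiplication maps all have maximal rank is nonempty and Zariski open (the same openness argument as for $\mathcal{W}_3$ in the proof of Lemma~\ref{lem!3openproperties}). Since $U$ is stable under multiplication by nonzero scalars and cannot be contained in the hyperplane $A_1\subset C_1=A_1\oplus kT$, the set $\{a\in A_1:a+T\in U\}$ is nonempty and open in $A_1$; pulling it back along the surjective linear map $R_1\to A_1$ shows $\omega+T\in U$ for general $\omega\in R_1$. For such $\omega$ the maps $C_{p_1}\to C_{p_1+1}$ and $C_{p_2}\to C_{p_2+1}$ have maximal rank, and to convert ``maximal rank'' into the precise statements in (ii) I would use that the WLP makes $\HF(C)$ unimodal while $\HF(C)$ is also symmetric: this gives $\dim_k C_{p_1}\le\dim_k C_{p_1+1}$ and $\dim_k C_{p_2}\ge\dim_k C_{p_2+1}$ (all four dimensions coinciding when $d$ is odd), so maximal rank at $p_1$ is injectivity and maximal rank at $p_2$ is surjectivity.

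I expect the main obstacle to be purely organizational rather than substantive: there is no hard estimate here, and the one genuine input is Lemma~\ref{lem!technicallemma4}, which replaces the Gorenstein duality that is unavailable because $C$ is not Gorenstein and lets me propagate injectivity downward from degree $p_1$. The points needing care are the passage between ``a general element of $C_1$'' and the normalized form $\omega+T$ in the (i)$\Rightarrow$(ii) direction, and the bookkeeping that the even/odd values of $p_1,p_2$ leave no gap between the ``injective'' range and the ``surjective'' range.
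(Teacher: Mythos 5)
Your proposal is correct and follows essentially the same route as the paper: Lemma~\ref{lem!technicallemma4} plus standard gradedness to propagate injectivity below $p_1$ and surjectivity above $p_2$ for (ii)$\Rightarrow$(i), and unimodality of $\HF(C)$ combined with the symmetry $\dim_k C_i=\dim_k C_{d-i}$ to turn maximal rank into injectivity at $p_1$ and surjectivity at $p_2$ for (i)$\Rightarrow$(ii). The only differences are cosmetic: you justify that $\omega+T$ is a general element of $C_1$ via an openness-and-scaling argument where the paper uses the automorphism $T\mapsto cT$, and you get the symmetry of $\HF(C)$ from the Gorenstein Hilbert function rather than directly from the decomposition in Remark~\ref{rem!structureOfCSecondVersion}.
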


\begin {proof} 
Assume that i) holds. 
 Since for nonzero $c \in k$ the map
$C \to C$, with $x_i \mapsto x_i$ and $T \mapsto cT$ is
well-defined and an automorphism, it follows that
for general $\omega \in R_1$ we have that
$\omega + T$ is a general element of $C_1$.
Since $C$ is assumed to have the  WLP  to prove ii) it is enough to 
prove $\dim C_{p_1} \leq \dim C_{p_1+1}$ and
$\dim C_{p_2 +1} \leq \dim C_{p_2}$. 
We assume it is not the case and we will get a contradiction.

 By \cite [Proposition 3.2] {HMetal}
$\HF(C)$ is unimodal. 
Using  Remark~\ref {rem!structureOfCSecondVersion} 
it follows that  $\dim C_{i} = \dim C_{d-i}$ for all $i \in \ZZ$.
First we assume that  $\dim C_{p_1} > \dim C_{p_1+1}$.
If $d$ is odd then $\dim C_{p_1} = \dim C_{p_1+1}$ which is a
contradiction. Hence $d$ is even. Since $d-p_1 = p_1+2$ it follows that
$\dim C_{p_1+2} = \dim C_{p_1} > \dim C_{p_1+1}$, which contradicts
the unimodality of $\HF(C)$.

We now assume that $\dim C_{p_2 +1} > \dim C_{p_2}$. 
If $d$ is odd  then $p_1 = p_2$, and hence  $\dim C_{p_2} = \dim C_{p_2+1}$,
which is a contradiction.
If $d$ is even $p_2=  p_1+1$ and $d-(p_2+1) = p_1$. Hence
$\dim C_{p_1} > \dim C_{p_1+1}$ which we proved
above that can not happen.

Conversely  assume that ii) holds.  
 Then by  Lemma~\ref{lem!technicallemma4} for all $0 \leq i \leq p_1$
the multiplication by $\omega +T$ map $C_i \to C_{i+1}$ is injective. 
Since $C$ is standard graded,  the assumption that the multiplication 
   by $\omega +T$ map $C_{p_2} \to C_{p_2+1}$
    is surjective implies
that for all $j \geq p_2$ the multiplication
by $\omega +T$ map $C_j \to C_{j+1}$ is surjective.  Since
$p_2 = p_1$ if $d$ is odd and  $p_2 = p_1 + 1$ if $d$ is even, we get
that for all $j \in \ZZ$ multiplication
 by $\omega +T$ as a map $C_j \to C_{j+1}$ is  injective or surjective (or both),
hence $C$ has the WLP.
\end {proof}

Due to its length the proof of the following lemma will be given
in Subsection~\ref{subs!proofoflemma3}.

\begin {lemma}  \label{lem!technicallemma3}
   The following are equivalent:

    i)   $C$ has the WLP. 

   ii)  $A$ has the  WLP and  property $M_{q,p_1}$ holds for $B$.
\end {lemma}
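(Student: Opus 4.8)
I would prove Lemma~\ref{lem!technicallemma3} by analysing the structure of $C$ given in Remark~\ref{rem!structureOfCSecondVersion}, namely $C_i = A_i \oplus (\oplus_{j=1}^q B_{i-j}T^j)$, together with the criterion of Lemma~\ref{lem!technical\_lemma\_57} that $C$ has the WLP iff for general $\omega \in R_1$ the multiplication by $\omega+T$ is injective $C_{p_1}\to C_{p_1+1}$ and surjective $C_{p_2}\to C_{p_2+1}$. The key point is that for $c = a + \sum_{j=1}^q b_{i-j}T^j$ one computes
\[
   (\omega+T)c = \omega a + \sum_{j=1}^{q}(\omega b_{i-j} + b_{i-j+1})T^j + b_{i-q}T^{q+1},
\]
and since $T^{q+1} = 0$ in $C$, the last term vanishes; so in coordinates the map $\omega+T$ is ``block bidiagonal'': the $A$-component goes by $\omega$ to the $A$-component, and each $B_{i-j}T^j$ contributes $\omega$ to $B_{i-j+1}T^{j}$ (the same power of $T$) and $+1$, i.e. the identity shift, to $B_{i-j+1}T^{j-1}$ (one lower power, landing in the $A$-slot when $j=1$). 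I would make this precise and then read off injectivity/surjectivity.

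\textbf{From (ii) to (i).} Assume $A$ has the WLP and $B$ has property $M_{q,p_1}$. For a general $\omega$, by Remark~\ref{rem!aboutwlpforA} multiplication by $\omega$ is injective $A_{p_1}\to A_{p_1+1}$ and surjective $A_{p_2}\to A_{p_2+1}$; by the definition of $M_{q,p_1}$ and Remark~\ref{rem!aboutMqpropertyno2}, $\omega^q$ is injective $B_{p_1-q}\to B_{p_1}$ and $\omega^q$ is surjective $B_{p_2-q}\to B_{p_2}$. For injectivity of $\omega+T$ on $C_{p_1}$: suppose $(\omega+T)c = 0$ with $c = a + \sum b_{p_1-j}T^j$. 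Reading the $T^{q}$-coefficient first gives $\omega b_{p_1-q} = -b_{p_1-q+1}$ ... working down the chain of coefficients of $T^{j}$ for $j = q, q-1, \dots, 1$, each $b_{p_1-j+1}$ is forced to equal $-\omega b_{p_1-j}$, and the $T^0$-coefficient gives $\omega a = -b_{p_1}$ — wait, more carefully, the coefficient of $T^j$ in $(\omega+T)c$ is $\omega b_{p_1-j} + b_{p_1-j+1}$ for $1\le j\le q$ (with $b_{p_1}$ read as the ``$a$-adjacent'' term), so $b_{p_1-j+1} = -\omega b_{p_1-j}$, giving inductively $b_{p_1} = (-\omega)^{q}b_{p_1-q}$ sitting inside $B_{p_1}$, while the $T^0$-part says $\omega a + b_{p_1} = 0$ in $A_{p_1+1}$... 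I need to track where $B_{p_1}$-elements sit: they appear as the $j=0$ term only when $p_1 - q \ge 0$ forces a $B_{p_1}T^{0}$ summand, which happens exactly when... Actually $C_{p_1}$ has no $T^0$ $B$-summand — the $B$-summands are $B_{p_1-1}T,\dots,B_{p_1-q}T^q$ — so the chain terminates: the coefficient of $T^1$ is $\omega b_{p_1-1} + (\text{image of } a)$, meaning $a$'s ``class'' and $b_{p_1-1}$ interact. I would set this up carefully so that injectivity on $C_{p_1}$ reduces to: injectivity of $\omega$ on $A_{p_1}$ \emph{and} injectivity of $\omega^q$ on $B_{p_1-q}$ (the bottom of the $B$-chain, using Lemma~\ref{lem!technicallemma4}-type propagation up the chain plus the Gorenstein pairing on $B$ to handle the middle $B_{p_1-j}$). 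Surjectivity on $C_{p_2}$ is dual: surjectivity of $\omega$ on $A_{p_2}$ and surjectivity of $\omega^q$ on $B_{p_2-q}$, which is exactly the $M_{q,p_1}$ condition in the form of Remark~\ref{rem!aboutMqpropertyno2}. Then Lemma~\ref{lem!technical\_lemma\_57} gives that $C$ has the WLP.

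\textbf{From (i) to (ii).} Conversely, assume $C$ has the WLP. The surjection $C \twoheadrightarrow C/(T) = A$ (killing $T$, which also kills all $B$-summands) sends the general Lefschetz element $\omega+T$ of $C$ to the general element $\omega$ of $A$; since $A$ is a quotient of $C$ by a regular-looking ... I should instead argue directly: injectivity of $\omega+T$ on $C_{p_1}$ restricted to the $A_{p_1}$-summand forces injectivity of $\omega$ on $A_{p_1}$ (if $a\in A_{p_1}$ with $\omega a$... one must check $\omega a = 0$ in $A$ forces $(\omega+T)a$ to have vanishing $A$-component, but its $T$-component is $a \cdot T$... hmm, $(\omega+T)a = \omega a + aT$, and $aT$ lives in $B_{p_1}T$ — but $C_{p_1+1}$ contains $B_{p_1}T$, and $aT = 0$ in $C$ iff $a \in I_L$-image, i.e. iff $a$ maps to $0$ in $B$). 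This is the delicate point and I'd handle it via the explicit presentation; once injectivity of $\omega$ on $A_{p_1}$ is established, $A$ has WLP by Remark~\ref{rem!aboutwlpforA}. For $M_{q,p_1}$ on $B$: take $b \in B_{p_1-q}$ with $\omega^q b = 0$ in $B_{p_1}$ and build $c = bT^q \in C_{p_1}$; then $(\omega+T)c = \omega b T^q + b T^{q+1} = \omega b\, T^q$ since $T^{q+1}=0$; iterating, $(\omega+T)^q (bT^q)$... I want to show $bT^q$ witnesses failure of injectivity unless $b = 0$, by noting $(\omega+T)^q \cdot bT^q$ involves $\omega^q b \cdot T^q$ plus lower $T$-powers ending in $\omega^{?}b$ terms that vanish by degree/$T$-nilpotence. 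The cleanest route: show that $\omega^q b = 0$ in $B_{p_1}$ implies $(\omega+T) \cdot (b T^q + \omega b\, T^{q-1} + \cdots)$ telescopes to $0$, exhibiting a nonzero kernel element in $C_{p_1}$ unless $b=0$ — contradicting injectivity of $\omega+T$ on $C_{p_1}$. Hence $\omega^q$ is injective $B_{p_1-q}\to B_{p_1}$, i.e. $M_{q,p_1}$ holds.

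\textbf{Main obstacle.} The crux is the bookkeeping in the ``block bidiagonal'' analysis: correctly identifying, for each graded piece, which $B$-summands survive (the range $B_{p_1-1}T, \dots, B_{p_1-q}T^q$ can be truncated when $p_1 < q$, cf. the case split in Lemma~\ref{lem!technicallemma4}), and showing that the only genuinely new conditions beyond ``$\omega$ is a Lefschetz element for $A$'' are the single injectivity/surjectivity statements for $\omega^q$ on $B$ at the bottom of the chain — the intermediate $B_{p_1-j}$ conditions being automatic from the Gorenstein pairing on $B$ (Remark~\ref{rem!poincare_duality_for_gor}) exactly as in the proof of Lemma~\ref{lem!technicallemma4}. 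Handling the interface term (the $T^1$-coefficient, where the $A$-part and the $B$-chain meet) without circularity is where I expect to spend the most care, and I would likely isolate it as a short sub-lemma about the map $(\omega+T)\colon C_i \to C_{i+1}$ in the explicit coordinates of Remark~\ref{rem!structureOfCSecondVersion}.
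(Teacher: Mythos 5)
Your overall strategy coincides with the paper's: the same decomposition $C_i=A_i\oplus(\oplus_{j=1}^q B_{i-j}T^j)$ of Remark~\ref{rem!structureOfCSecondVersion}, the criterion of Lemma~\ref{lem!technical_lemma_57}, the chain of $T$-coefficient equations for (ii)$\Rightarrow$(i), and for the $M_{q,p_1}$ part of (i)$\Rightarrow$(ii) exactly the paper's witness $c=\sum_{i=1}^q(-1)^{q-i}\omega^{q-i}bT^i$ (you omit the alternating signs, but the telescoping idea is the same). Two smaller points: in the injectivity argument the ``middle'' terms $b_{p_1-j}$ need neither Lemma~\ref{lem!technicallemma4} nor the Poincar\'e pairing on $B$ --- once the $A$-component vanishes, the coefficient equations give $\omega^q b_{p_1-q}=0$, $M_{q,p_1}$ kills $b_{p_1-q}$, and back-substitution kills all the others; and the surjectivity on $C_{p_2}$ is not merely ``dual'' --- it requires actually solving the triangular system, whose single compatibility condition is $\omega^q e_{p_2-q}=(-1)^qa_{p_2}+\sum_i(-1)^{q+1-i}\omega^i b_{p_2-i}$, solvable precisely by the surjective form of $M_{q,p_1}$ in Remark~\ref{rem!aboutMqpropertyno2}. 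These are fillable details.

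The genuine gap is in (i)$\Rightarrow$(ii), at the step where you propose that injectivity of $\omega+T$ on $C_{p_1}$, ``restricted to the $A_{p_1}$-summand,'' forces injectivity of $\omega$ on $A_{p_1}$. You flag this yourself as the delicate point, but the route cannot be repaired ``via the explicit presentation'': if $0\neq a\in A_{p_1}$ with $\omega a=0$, then $(\omega+T)a=\bar aT$ with $\bar a$ the image of $a$ in $B_{p_1}$, which is in general nonzero, so $a$ is not a kernel element; to manufacture one of the form $a+\sum b_{p_1-j}T^j$ you would need $\bar a\in\omega^qB_{p_1-q}$, which is not available. The correct move --- and what the paper does --- is to use the \emph{surjectivity} half of the WLP of $C$: given $a\in A_{p_2+1}\subset C_{p_2+1}$, write $a=(\omega+T)c$ and compare $A$-components to get $a=\omega a_{p_2}$; equivalently, surjectivity descends along the quotient $C\to C/(T)\cong A$ (injectivity does not, which is exactly why your restriction argument fails). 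Then Remark~\ref{rem!aboutwlpforA}, i.e.\ Gorenstein duality for $A$, converts surjectivity of $\omega\colon A_{p_2}\to A_{p_2+1}$ into the WLP of $A$. You actually began to write this quotient argument and then abandoned it for the injectivity route; reinstating it closes the gap.
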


\subsection {Proof of Lemma~\ref{lem!aboutIC}} \label {subs!proofoflemmaaboutIP}

For the proof of Lemma~\ref{lem!aboutIC} we need the following proposition.

\begin {proposition}  \label{prop!anputmathcalH}
  Set  $\mathcal {H} = k[x_1, \dots , x_n, T, z]/(I,Tz, TI_L)$. Then
   $\mathcal {H}$ is Cohen--Macaulay and $\dim \mathcal {H} =  d+1$.
\end {proposition}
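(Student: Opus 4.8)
The plan is to exhibit an explicit linear system of parameters for $\mathcal{H}$ whose length matches the claimed dimension, together with a combinatorial or homological identification of $\mathcal{H}$ that makes its Cohen--Macaulayness visible. First I would observe that $\mathcal{H}$ is itself (up to a relabelling of variables) a Stanley--Reisner ring, or very close to one. Indeed, $(I, Tz, T I_L)$ is a monomial ideal in $k[x_1,\dots,x_n,T,z]$: $I$ is squarefree, $Tz$ is squarefree, and the generators of $I_L=(I:x_\sigma)$ are squarefree monomials in $x_{q+2},\dots,x_n$ (the minimal generators of the Stanley--Reisner ideal of $L$), so $T I_L$ is squarefree. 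Hence $\mathcal{H}=k[\Omega]$ for the simplicial complex $\Omega$ on the vertex set $\{1,\dots,n,T,z\}$ determined by this ideal; one checks directly from the generators that the faces of $\Omega$ are the unions $\alpha\cup\beta$ with $\alpha\in D$, $\beta\subseteq\{T,z\}$, subject only to the rule that if $T\in\beta$ then $\alpha$ must lie in $\operatorname{star}_D\sigma$ (so that no generator of $T I_L$ is contained in it) and $z\notin\beta$ (so that $Tz$ is excluded). In other words,
\[
  \Omega=\bigl(D * 2^{\{z\}}\bigr)\ \cup\ \bigl(2^{\{T\}} * \operatorname{star}\nolimits_D\sigma\bigr).
\]

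Next I would compute the dimension. We have $\dim D = d-1$, so $D*2^{\{z\}}$ has dimension $d$, i.e. $\dim k[D,2^{\{z\}}]=d+1$; and $\operatorname{star}_D\sigma$ has dimension $d-1$ as well (it contains the facets of $D$ through $\sigma$), so $2^{\{T\}}*\operatorname{star}_D\sigma$ also has dimension $d$. Hence $\dim\Omega=d$ and $\dim\mathcal{H}=d+1$, as claimed. The substantive point is Cohen--Macaulayness. The two pieces $X_1=D*2^{\{z\}}$ and $X_2=2^{\{T\}}*\operatorname{star}_D\sigma$ are Cohen--Macaulay of dimension $d$: $X_1$ because $D$ is Gorenstein$*$, hence Cohen--Macaulay, and cones/joins with a point preserve Cohen--Macaulayness; $X_2$ similarly because $\operatorname{star}_D\sigma$ is a cone (with apex any vertex of $\sigma$) over $\operatorname{lk}_D\sigma * \partial'$ type data — more cleanly, $\operatorname{star}_D\sigma = 2^\sigma * \operatorname{lk}_D\sigma$ is a join of a simplex with the Gorenstein$*$ (hence Cohen--Macaulay) complex $L$, so it is Cohen--Macaulay. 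Their intersection is $X_1\cap X_2 = \operatorname{star}_D\sigma$ (the common faces are exactly those $\alpha$ with $\alpha\in\operatorname{star}_D\sigma$ and no new vertices), which is Cohen--Macaulay of dimension $d-1$.

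I would then finish with a Mayer--Vietoris / gluing argument: for simplicial complexes $X_1,X_2$ with $X_1\cup X_2=\Omega$ and $X_1\cap X_2$ their (full) intersection, there is an exact sequence of $k$-vector spaces (indeed of modules over the big polynomial ring)
\[
  0 \to k[\Omega] \to k[X_1]\oplus k[X_2] \to k[X_1\cap X_2] \to 0,
\]
obtained from the corresponding statement on Stanley--Reisner ideals $I_\Omega = I_{X_1}\cap I_{X_2}$. Using Reisner's criterion (or the depth characterization \eqref{eqn!depthwithExts} together with the long exact sequence in $\Ext$), the fact that $k[X_1]$ and $k[X_2]$ have depth $d+1$ while $k[X_1\cap X_2]$ has depth $d$ forces $\depth k[\Omega]\ge d+1$; since $\dim\mathcal{H}=d+1$ this gives equality and Cohen--Macaulayness. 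The main obstacle I anticipate is getting the combinatorial description of $\Omega$ and of $X_1\cap X_2$ exactly right from the generators of $(I,Tz,TI_L)$ — in particular verifying that $I_L$ restricted to $\{x_{q+2},\dots,x_n\}$ is what governs the condition "$T$ can be added only over $\operatorname{star}_D\sigma$", and checking the degree bookkeeping so that the short exact sequence is genuinely exact (no unexpected overlap in top-dimensional faces). Once that dictionary is in place, the homological gluing step is routine.
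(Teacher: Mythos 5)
Your proposal is correct and follows essentially the same route as the paper: you identify $\mathcal{H}$ as the Stanley--Reisner ring of the complex $(2^{\{z\}}*D)\cup(2^{\{T\}}*\operatorname{star}_D\sigma)$, note both pieces are Cohen--Macaulay of dimension $d$ and their intersection $\operatorname{star}_D\sigma=2^{\sigma}*\operatorname{lk}_D\sigma$ is Cohen--Macaulay of dimension $d-1$, and glue. The only difference is cosmetic: the paper invokes a gluing lemma of Hibi for the last step, while you carry it out directly via the Mayer--Vietoris short exact sequence and the depth estimate, which is exactly the content of that lemma.
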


\begin {proof}  Recall that $*$ denotes the join of simplicial complexes and
 for a finite set $S$ we denote by $2^S$
the simplex with vertex set $S$.
  We set $\deg x_i = \deg T = \deg z = 1$. 
By definition $\mathcal {H}$ is isomorphic
to the quotient of the polynomial ring $k[x_1, \dots , x_n, T, z]$
by a square-free monomial ideal.  
 We denote by $D_{\mathcal {H}}$ 
the simplicial complex on the vertex set  $ \{ 1,2, \dots ,n, T , z \}$
that corresponds to  the monomial ideal. The set of facets
of $D_{\mathcal {H}}$ is equal to the union 
\[
    \{  \{z,u \} : u  \text { facet of } D   \}  \; \; \cup  \; \; 
    \{    \{ T, 1,2, \dots, q+1,w \} :  w \text{ facet of } L \}.
\]
As a consequence, we have the following decomposition
\[
    D_{\mathcal {H}} = E_1 \cup E_2
\]
where  $E_1 = 2^{\{ z \}} * D$,
$E_2 = 2^{\{ T, 1, 2, \dots , q+1 \}} * L$.

Since  $D$ is Cohen--Macaulay over $k$ 
with dimension $d-1$ we have that $E_1$ 
 is Cohen--Macaulay over $k$ 
with dimension equal to $d$. 
Since  $L$ is Cohen--Macaulay over $k$ 
with dimension $d-q-1$ we have that  $E_2$ 
 is Cohen--Macaulay over $k$ 
with dimension equal to $d$.

Moreover,   $E_1 \cap E_2 = 2^{\{ 1,2 , \dots , q+1\}} * L$    is 
also  Cohen--Macaulay  over $k$  with dimension equal to $d-1$.
Hence using   \cite[Lemma 23.6]  {Hi} it follows
that $D_{\mathcal {H}}$ is  Cohen--Macaulay  over $k$  with dimension $d$.
Hence $\mathcal{H}$ is a Cohen--Macaulay ring with dimension equal to $d+1$.
\end {proof}

We denote by  ${\mathcal {H}}_a$ the ring
   ${\mathcal {H}}$   but with 
   $\deg x_i = \deg T = 1$ and $ \deg z = q$.  Since the dimension and the Cohen--Macaulay
property is independent of the grading, 
Proposition~\ref{prop!anputmathcalH} implies that 
   ${\mathcal {H}}_a$ is Cohen--Macaulay and $\dim {\mathcal {H}}_a =  d+1$.

The element $z-T^q \in {\mathcal {H}}_a$ is homogeneous and 
   ${\mathcal {H}}_a/(z-T^q)  \iso  R[T]/I_C$ as graded $k$-algebras.  Hence
$\dim R[T]/I_C \geq d+1-1 = d$.  By 
Lemma~\ref{lem!structureOfCno2} $C$ is Artinian.
Since $C = R[T]/(I_C, f_1 , \dots , f_d)$ it follows that
$\dim R[T]/I_C  \leq d$. As a consequence $\dim R[T]/I_C= d$.
Since  $z-T^q$ is homogeneous,
${\mathcal {H}}_a$ is Cohen--Macaulay and  
$\dim {\mathcal {H}}_a/ (z-T^q) = \dim {\mathcal {H}}_a -1$, it
follows that $z-T^q$ is  ${\mathcal {H}}_a$-regular.
Hence $R[T]/I_C$ is Cohen--Macaulay. As a consequence,
$f_1, \dots , f_d$ is an  $R[T]/I_C$-regular sequence.
Hence $ \HF ( R[T]/I_C) = \Gamma^d ( \HF (C))$.

Since  $I_G = (I, T I_L, T^{q+1}- x_{\sigma})$
it is clear that $I_C$ is  a subset of the initial ideal
of the $I_G$ with respect to the reverse lexicographic order
in $R[T]$ with $T > x_1 > \dots > x_n$. 
Since by Lemma~\ref{lem!structureOfCno2}
  $\HF(C)$ is equal to the Hilbert function of
a general Artinian reduction of $k[D_{\sigma}]$
and $k[D_{\sigma}]$ is Gorenstein of dimension $d$, it follows that
\[
    \HF(R[T]/I_C) =  \Gamma^d ( \HF (C)) =  \HF (k[D_{\sigma}]) = \HF(R[T]/I_G),
\]
with the last equality  by Lemma~\ref{lem!structureOfG}.
As a consequence,  $I_C$ is  equal to the initial ideal
of the $I_G$.  This finishes the proof of  Lemma~\ref{lem!aboutIC}.

\subsection {Proof of Lemma~\ref{lem!technicallemma3}} \label {subs!proofoflemma3}

By Lemma~\ref{lem!aboutAandB}  $A= \oplus_{i=0}^d A_i$ with $A_d$ $1$-dimensional and 
  $B= \oplus_{i=0}^{d-q-1} B_i$ with $B_{d-q-1}$   $1$-dimensional.
By Remark~\ref {rem!structureOfCSecondVersion} 
for all $i \geq 0$ we have
\begin {equation}  \label{eqn!structureOfC}
     C_i = A_i \oplus ( \oplus_{j=1}^q  T^j B_{i-j} )
\end {equation}
Hence $\dim C_{i} = \dim C_{d-i}$ for all $i \in  \ZZ$. In particular 
$C_{i} = 0$ for $i \geq d+1$ and  $C_{d}$ is $1$-dimensional.
Let $\omega \in R_1$ be a general linear element.  By
Lemma~\ref {lem!technical_lemma_57} 
$C$ has the WLP if and only if the multiplication by $\omega +T$
map $C_{p_1} \to C_{p_1+1}$ is injective and
the multiplication by $\omega +T$
map $C_{p_2} \to C_{p_2+1}$ is surjective.

We assume that $A$ has the WLP and $B$ satisfies property $M_{q,p_1}$ and we will
show that $C$ has the WLP.
For that we first show that the multiplication by $\omega +T$
map $C_{p_1} \to C_{p_1+1}$ is injective.
Assume it is not.    Then there exists   $0 \not= c \in C_{p_1}$ such that
\begin {equation}  \label{eqn!multiplbyciszero}
     (\omega + T) c = 0 
\end {equation}
 in $C_{p_1+1}$. 
By Equation~(\ref{eqn!structureOfC}) there exist (unique)
$a_{p_1} \in A_{p_1}$ and, for $1 \leq j \leq q$, $b_{p_1-j} \in B_{p_1-j}$ such that
\[
      c = a_{p_1} + \sum_{j=1}^q  b_{p_1-j} T^j 
\]
Since $A$ is assumed to have the  WLP, if $a_{p_1} \not= 0$ we have $\omega a_{p_1} \not= 0$
which implies     $ (\omega + T) c \not= 0$, 
which contradicts  Equation (\ref{eqn!multiplbyciszero}).     
Hence we have $a_{p_1} = 0$ in $A$.   Equation~(\ref{eqn!multiplbyciszero})
then implies that for $j=1,2, \dots , q-1$
\[
     \omega b_{p_1-1} = 0,    \quad \quad 
          b_{p_1-j} +  \omega b_{p_1-(j+1)} = 0, 
\]
with all equations in $B$.  Combining these equations we get 
$\omega^q b_{p_1-q} = 0$ in $B$. Using the assumption
that $B$ satisfies   property $M_{q,p_1}$  it follows that
$b_{p_1-q} = 0$ in $B$, which using the above equations
implies that $b_{p_1-j} = 0$ for all $1 \leq j \leq q$, hence
$c= 0$, a contradiction.

If $d$ is odd, since $p_1 = p_2$ and $\dim C_{p_1} = \dim C_{p_1+1}$
we get  that  the multiplication by $\omega +T$
map $C_{p_2} \to C_{p_2+1}$ is also surjective, hence 
by what we said above $C$ has the WLP.  If $d$ is even
we need the following argument:

Assume $c' \in C_{p_2+1}$ with
\[
   c'  =  a_{p_2+1} + \sum_{j=1}^q  b_{p_2+1-j} T^j 
\]
where $a_{p_2+1} \in A_{p_2+1}$ and  $b_{p_2+1-j}  \in
   B_{p_2+1-j}$ for all $1 \leq j \leq q$.
We will find
\[
   c  =  a_{p_2} + \sum_{j=1}^q  e_{p_2-j} T^j  \in C_{p_2}
\]
where $a_{p_2} \in A_{p_2}$ and  $e_{p_2-j}  \in
   B_{p_2-j}$ for all $1 \leq j \leq q$.
such that $ (\omega + T)  c = c'$. 
Hence we need to have
(with the first equation in $A$ and the remaining $q$ equations in $B$)
\begin {eqnarray*}
       a_{p_2+1}   &  = & \omega a_{p_2}  \\
       b_{p_2+1-1}  & = &   a_{p_2} + \omega e_{p_2-1} \\
       b_{p_2+1-2}  & = &   e_{p_2-1} + \omega e_{p_2-2} \\
                &  \vdots &  \\
       b_{p_2+1-q}  & = &   e_{p_2-q+1} + \omega e_{p_2-q} 
\end {eqnarray*}
\indent Since $A$ is assumed to have the WLP, 
the multiplication by $\omega$ map $A_{p_2} \to A_{p_2+1}$ is
surjective, hence there exists $a_{p_2} \in A_{p_2}$
such that   $a_{p_2+1}     =  \omega a_{p_2}$ in $A$. We fix such $a_{p_2}$. 
Given $e_{p_2-q} \in B_{p_2-q}$,  
the last $q-1$ equations in $B$ inductively determine (unique)
$e_{p_2-q+1},  \dots  , e_{p_2-1}$ that satisfy them.  What we need
is to choose $e_{p_2-q}$ in such a way that we have compatibility
with the second equation 
\[ 
       b_{p_2}   =    a_{p_2} + \omega e_{p_2-1}.
\]
If we express $e_{p_2-q+1},  \dots  , e_{p_2-1}$  in terms of $e_{p_2-q}$,
the compatibility equation becomes 
\[
     \omega^q e_{p_2-q}  = (-1)^{q} a_{p_2} + \sum_{i=0}^{q-1} (-1)^{q+1-i} \omega^i b_{p_2-i} 
\]
(equation in $B$).  Using the assumption
that $B$ satisfies   property $M_{q,p_1}$, 
Remark~\ref {rem!aboutMqpropertyno2} implies
that there exists $e_{p_2-q}  \in B_{p_2-q}$ that satisfies the compatibility
equation.  \\

We now prove the converse. We assume that $C$ has the WLP and
we prove that $A$ has the WLP and $B$ satisfies the $M_{q,p_1}$ property.
Since $C$ has the WLP,  as we said above the multiplication  by $\omega +T$
map $C_{p_1} \to C_{p_1+1}$ is injective and
the multiplication by $\omega +T$
map $C_{p_2} \to C_{p_2+1}$ is surjective.

Let $ a \in A_{p_2+1} \subset  C_{p_2+1}$.  Then there exists 
$c \in C_{p_2}$ such that $ a = (\omega + T) c$.
Write
\[
   c  =  a_{p_2} + \sum_{j=1}^q  e_{p_2-j} T^j  \in C_{p_2}
\]
with equality in $C$, 
where $a_{p_2} \in A_{p_2}$ and  $e_{p_2-j}  \in
   B_{p_2-j}$ for all $1 \leq j \leq q$.
It follows that $ a = \omega a_{p_2}$, 
hence the multiplication by $\omega$ map $A_{p_2} \to A_{p_2+1}$
is surjective.  
Using Remark~\ref{rem!aboutwlpforA} it follows
that $A$ has the WLP.

We will now prove that $B$ has the property $M_{q,p_1}$. We assume
it is not the case and we will get a contradiction.  By the assumption
there exists $0 \not= b \in B_{p_1-q}$ such that $\omega^q b = 0$
in $B$. We set $c =    \sum_{i=1}^q   (-1)^{q-i} \omega^{q-i} b T^{i} \in C_{p_1}$.
Since the  summand of $c$  corresponding to  $i=q$ is $bT^{q}$, 
we get from Equation~(\ref{eqn!structureOfC})  that 
$c \not= 0$. 
Using that $T^{q+1} = 0$ in $C$ we  have 
\begin{eqnarray*}
     (\omega + T) c  & = &  \sum_{i=1}^q   (-1)^{q-i} \omega^{q-i+1} b T^{i}
                               +  \sum_{i=1}^q   (-1)^{q-i} \omega^{q-i} b T^{i+1}  \\
                     & = &    
                 (-1)^{q-1} \omega^{q} b T +   \sum_{i=2}^q   (-1)^{q-i} \omega^{q-i+1} b T^{i}
                               +  \sum_{i=1}^{q-1}   (-1)^{q-i} \omega^{q-i} b T^{i+1} \\
          &   = &  0 
\end{eqnarray*}
\noindent which  is a contradiction, since the multiplication  by $\omega +T$
map $C_{p_1} \to C_{p_1+1}$ was assumed to be  injective. This contradiction 
finishes the proof of   Lemma~\ref{lem!technicallemma3}.

\subsection {Proof of Theorem~\ref{thm!forsDwlp} }    \label {subs!proofOfTHmforsDwlp}

We assume   $q > p_2 $  and $k[D_{\sigma}]$ has the WLP.
The assumption  that $k[D_{\sigma}]$ has the WLP implies by 
Lemma~\ref{lem!structureOfG} that $G$ has the WLP. 
 Moreover, by the same lemma we get
that $\HF (G) = \HF(C)$. As a consequence, 
Lemma~\ref{lem!structureOfCno2} implies that
$ G_d$ is $1$-dimensional
 and, $G_j = 0$ for $j > d$.
These properties, together with the Gorensteinness
(by Lemma~\ref{lem!structureOfG})
and the WLP of $G$ imply that for general 
$\omega \in R_1$ the multiplication by
$\omega +T$ map $G_{p_2} \to G_{p_2+1}$ is surjective.

Using that the map $\psi$ in
the proof of  Lemma~\ref{lem!structureOfG}  is bijective,  it follows that
the natural map $A \to G$, with $[a] \mapsto [a]$ for $a \in R$,
is injective.
Assume $a \in  A_{p_2+1} \subset  G_{p_2+1}$.  Then there 
exists $g \in G_{p_2}$ such that 
\begin {equation}   \label {eqn!forGasurjective}
      (\omega + T) g = a
\end {equation}
Using again that the map $\psi$ in
the proof of  Lemma~\ref{lem!structureOfG}  is bijective, 
there exists $e \in A_{p_2}$ and, for $1 \leq j \leq q$,
$b_{p_2-j} \in B_{p_2-j}$ such that
\[
    g =  e + \sum_{i=1}^q b_{p_2-i}T^i,
\]  
with equality in $G$.
The assumption $q > p_2$ implies that  $p_2-q < 0$, hence
$b_{p_2-q} =0$ in $B$.  As a consequence
\[
   a = (\omega + T) g = \omega e + \sum_{i=1}^{q-1} b_{p_2-i}T^i
                 + \sum_{i=1}^{q-1} b_{p_2-i}T^{i+1}
\]
(equality in $G$), which imply that $a = \omega e$ (equality in $A \subset G$). 
It follows that the multiplication by $\omega$ map $A_{p_2} \to A_{p_2 + 1}$ is surjective.
Using Remark~\ref{rem!aboutwlpforA} it follows
that $A$ has the WLP.
Hence,  Lemma~\ref{lem!technicallemma1}  implies that
$k[D]$ has the WLP. This finishes the
proof of Theorem~\ref{thm!forsDwlp}.

\section {Further results on the general question}  \label{sec!furtherresults}

In Corollary~\ref{cor!theiffstatement} 
we proved  that if  $\sigma$ is a face of a Gorenstein* simplicial complex $D$ and
$2 (\dim \sigma) > \dim D + 1$ then  the Stanley--Reisner ring
$k[D]$ has the WLP if and only if  $k[D_{\sigma}]$ has the WLP.
A natural question is whether the restriction on the dimension
of $\sigma$ is necessary. 
 Proposition~\ref {prop!aboutPaandPb},
which is motivated by  Propositions~\ref{prop!DiscussionAboutPaPb}  
and~\ref{prop!abouttripleequalityforilesd}, suggests some ideas that 
could, perhaps, prove useful  in  attacking this question.
In addition, in Remark~\ref {rem!secondproofofThmforhighq}
we give a second proof of  Corollary~\ref{cor!theiffstatement}
based on the constructions of the present section.

We continue using the definitions and notation of  Section~\ref{sec!wlpofstellars}.
We  set 
\[
    \mathcal{L} =  (I, f_1I_L, f_2, \dots , f_{d+1}) \subset R, \quad \quad
        {\mathcal{P}}_a =  \mathcal{L}: x_{\sigma} \subset R,
\]
\[
       {\mathcal{P}}_b = \mathcal{L} : f_1^{q+1} \subset R,
              \quad \quad
       {\mathcal{P}}_c = (  I_L ,  f_2, \dots , f_{d+1})  \subset R.
\]
We also define the ideal $I_Q = ( I, TI_L) \subset R[T]$ and
set $h_1 = \HF (k[D]),  h_2 = \HF (R/I_L)$, 
${\mathcal{K}}_a = \mathcal{L} + (x_{\sigma}) \subset R$, 
${\mathcal{K}}_b = \mathcal{L} + (f_1^{q+1}) \subset R$. There are two 
short exact sequences
\begin {equation}  \label{eqn!shortexactseq1forPa}
   0  \to (R/{\mathcal{P}}_a) (-q-1) \to R/\mathcal{L} \to  R/{\mathcal{K}}_a  \to 0
\end {equation}
and
\begin {equation}  \label{eqn!shortexactseq2forPb}
   0  \to (R/{\mathcal{P}}_b) (-q-1) \to R/\mathcal{L} \to  R/{\mathcal{K}}_b  \to 0
\end {equation}
where $(-q-1)$  denotes twist by $-q-1$, the map $ R/{\mathcal{P}}_a \to  R/\mathcal{L}$
is multiplication by $x_{\sigma}$ and the map 
$  R/{\mathcal{P}}_b  \to R/\mathcal{L}$
is multiplication by $f_1^{q+1}$.

We also have
\begin {equation}  \label  {eqn!formathcalKa}
     R/{\mathcal{K}}_a  =  k[D_{\sigma}]/( T-f_1, f_2, \dots , f_{d+1}),
\end {equation}
and
\begin {equation}  \label  {eqn!formathcalKb}
     R/{\mathcal{K}}_b  =  (R[T]/I_C)/( T-f_1, f_2, \dots , f_{d+1}).
\end {equation}

The following proposition gives the main properties of $R[T]/I_Q$.

\begin {proposition}  \label{prop!infoaboutIQ}
  i)  We have  $ \dim R[T]/I_Q =  d+1 $ and $\depth R[T]/I_Q = d$.

 ii)  For all $m \geq 0$ we have  
     \[
              \HF( m,  R[T]/I_Q) =   \HF(m, k[D] ) + \sum_{j=1}^{m} \HF (m-j, R/I_L)
     \]

iii)  If  $t \leq d$ then the sequence $T-f_1, f_2, \dots , f_t$ is $R[T]/I_Q$-regular.
\end {proposition}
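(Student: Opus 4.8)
I would deduce all three statements from a single short exact sequence of graded $R[T]$-modules. Since $I\subseteq I_L$, one has $TR[T]\cap I_Q=T\,I_LR[T]$ inside $R[T]$: if $Tf=a+Tb$ with $a\in IR[T]$ and $b\in I_LR[T]$, then $a\in IR[T]\cap TR[T]=T\,IR[T]$ because $T$ is a nonzerodivisor modulo $IR[T]$, whence $f\in I_LR[T]$. Together with $I_Q+(T)=I+(T)$ this shows that multiplication by $T$ gives
\[
   0\longrightarrow (R/I_L)[T](-1)\xrightarrow{\ \cdot T\ } R[T]/I_Q\longrightarrow R[T]/(I,T)=k[D]\longrightarrow 0,
\]
where $T$ acts as $0$ on the $R[T]$-module $k[D]$. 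The first task is to nail down this sequence and the $R[T]$-module structure of its outer terms. Geometrically it is the Mayer--Vietoris sequence of the decomposition $D_Q=D\cup\bigl(2^{\{T\}}\ast\operatorname{star}_{D}\sigma\bigr)$, whose pieces intersect in $\operatorname{star}_{D}\sigma=2^{\sigma}\ast L$; here one uses $I_{\operatorname{star}_{D}\sigma}=(I:x_\sigma)=I_L$, so that $k[\operatorname{star}_{D}\sigma]\cong R/I_L$ and $k[2^{\{T\}}\ast\operatorname{star}_{D}\sigma]\cong (R/I_L)[T]$.

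Part~(ii) then follows at once, since $\HF(m,R[T]/I_Q)=\HF(m,k[D])+\HF(m-1,(R/I_L)[T])$ and, by Equation~(\ref{eqn!relatingLinksDefinitions}) and the rule for adjoining a variable, $\HF(m-1,(R/I_L)[T])=\sum_{i=0}^{m-1}\HF(i,R/I_L)=\sum_{j=1}^{m}\HF(m-j,R/I_L)$. For part~(i), Equation~(\ref{eqn!relatingLinksDefinitions}) and Lemma~\ref{lem!aboutAandB} give that $R/I_L$ is Cohen--Macaulay of dimension $d$, hence $(R/I_L)[T]$ is Cohen--Macaulay of dimension $d+1$, while $k[D]$ is Cohen--Macaulay of dimension $d$. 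The sequence then gives $\dim R[T]/I_Q=d+1$ and, by the standard inequality $\depth N\ge\min\{\depth M,\depth P\}$ for a short exact sequence, $\depth R[T]/I_Q\ge\min\{d+1,d\}=d$. For the reverse inequality I would show that $R[T]/I_Q$ is not Cohen--Macaulay: its submodule $(R/I_L)[T](-1)$ is supported on $V(I_LR[T])$, which does not contain the $d$-dimensional set $V(I,T)=\operatorname{Supp}k[D]$, because $I_L\not\subseteq (I,T)$ — indeed $I_L\supsetneq I$, since $D$ being Gorenstein$*$ is not a cone, so $x_\sigma$ is a zerodivisor on $k[D]$ — hence $R[T]/I_Q$ has an associated prime of coheight $d$ and is not unmixed; equivalently, $D_Q$ is not pure. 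Therefore $\depth R[T]/I_Q=d$.

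For part~(iii) I would use the standard fact that if an element $y$ is regular on both $M$ and $P$ in a short exact sequence $0\to M\to N\to P\to 0$, then $y$ is regular on $N$ and $0\to M/yM\to N/yN\to P/yP\to 0$ is again exact; iterating, a finite sequence that is regular on $M$ and on $P$ is regular on $N$. Applied to the sequence above with $M=(R/I_L)[T](-1)$ and $P=k[D]$, it suffices to verify, for $t\le d$, that $T-f_1,f_2,\dots,f_t$ is a regular sequence on each of $M$ and $P$. On $M$: $(R/I_L)[T]$ is Cohen--Macaulay of dimension $d+1$ and $(R/I_L)[T]/(T-f_1)\cong R/I_L$ has dimension $d$, so $T-f_1$ is regular; then $f_2,\dots,f_t$ are $t-1\le d-1<d=\depth(R/I_L)$ general linear elements, hence form a regular sequence on $R/I_L$ by Lemma~\ref{lem!aboutGeneralLinearsAndDepth}. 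On $P=k[D]$: here $T$ acts as $0$, so $T-f_1$ acts as the general linear element $-f_1$, which is regular on the Cohen--Macaulay ring $k[D]$, and then $f_2,\dots,f_t$ form a regular sequence on $k[D]/(f_1)$, again by Lemma~\ref{lem!aboutGeneralLinearsAndDepth}. This gives~(iii).

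The only real obstacle is the first step: establishing the short exact sequence precisely, including the identification $TR[T]\cap I_Q=T\,I_LR[T]$ and the $R[T]$-module structure of its outer terms (in particular that $T$ acts as $0$ on the quotient $k[D]$ and as the polynomial variable on $(R/I_L)[T]$). Once that is in place, (i)--(iii) are routine depth and regular-sequence bookkeeping on top of Lemma~\ref{lem!aboutAandB}, Lemma~\ref{lem!aboutGeneralLinearsAndDepth} and Equation~(\ref{eqn!relatingLinksDefinitions}).
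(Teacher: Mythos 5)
Your proposal is correct, and it reaches all three parts by a route genuinely different from the one in the paper. The paper also starts from the decomposition $I_Q=P_1\cap P_2$ with $P_1=(I,T)$, $P_2=I_LR[T]$, but then works with the Mayer--Vietoris sequence $0\to R[T]/I_Q\to R[T]/P_1\oplus R[T]/P_2\to R[T]/(P_1+P_2)\to 0$ (Equation~(\ref{eqn!shortExactSequenceno1})): part~(ii) comes from additivity of Hilbert functions along that sequence, the bound $\depth R[T]/I_Q\geq d$ from the depth inequality \cite[Proposition~1.2.9]{BH} together with Equation~(\ref{eqn!depthwithExts}) and additivity of Ext on the direct sum, the bound $\depth\leq d$ from the non-purity of $D_Q$ via \cite[Corollary~5.1.5]{BH}, and part~(iii) from $\depth R[T]/I_Q=d$ combined with Lemma~\ref{lem!aboutGeneralLinearsAndDepth}, after observing that $(T-f_1,f_2,\dots,f_t)$ is the ideal generated by $t$ general linear forms of $R[T]$. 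You instead use the two-term ``multiplication by $T$'' sequence $0\to (R/I_L)[T](-1)\to R[T]/I_Q\to k[D]\to 0$, whose identification $TR[T]\cap I_Q=T\,I_LR[T]$ you verify correctly (though calling it the Mayer--Vietoris sequence is a slight misnomer --- that name fits the paper's three-term sequence). This buys you a few things: (ii) and the lower depth bound need no Ext computation, and (iii) follows from the snake-lemma argument applied to the outer terms, so it does not even require knowing $\depth R[T]/I_Q=d$ nor the identification of $(T-f_1,f_2,\dots,f_t)$ with a general linear system in $R[T]$; the paper's route, by contrast, disposes of (iii) in one line once the depth is known. Your upper bound $\depth\leq d$ via an associated prime of coheight $d$ is the algebraic face of the paper's non-purity argument; it is stated a bit telegraphically --- one should add that a minimal prime of $(I,T)$ not containing $I_L$ (which exists since $I_L\not\subseteq(I,T)$, as you argue from the Gorenstein* hypothesis) is automatically a minimal, hence associated, prime of $I_Q$, and then invoke $\depth\leq\dim R[T]/\mathfrak{p}$ for associated primes $\mathfrak{p}$ --- but this is routine and the substance is right.
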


\begin {proof}
i)  Denote by   $D_Q$ the simplicial complex
that corresponds to the square-free
monomial ideal $I_Q$.
We set $P_1 = (I,T) \subset R[T] , P_2 = (I_L) \subset R[T]$.
  Using that $I \subset I_L$ and that $T$ is a new variable we have
 $   I_Q  =  P_1 \cap P_2 $.
Hence
\[
    D_Q =  D  \; \cup   \;  (   2^{\{T, 1, \dots , q+1\}} * L ).
\]
As a consequence,
since $\dim D =  d-1$ and  $\dim L = d-q-2$  we have
$\dim D_Q = d$, hence $\dim R[T]/I_Q = d+1$. Since
$D_Q$ is not pure  (in the sense that it contains facets of
different dimensions), by 
\cite [Corollary 5.1.5]  {BH}
  $R[T]/I_Q$ is not Cohen--Macaulay,
hence $\depth R[T]/I_Q \leq d$.

We have that $R[T]/P_1$ is Cohen--Macaulay of dimension $d$, hence
has depth $d$,
$R[T]/P_2$ is Cohen--Macaulay of dimension $d+1$, hence has depth
$d+1$ and $R[T]/(P_1+P_2)$ is Cohen--Macaulay of dimension
$d$, hence has depth $d$.    Using Equation~(\ref{eqn!depthwithExts})
and the additivity of the Ext functor on the second variable
(\cite [Proposition~~3.3.4]{Wei})
we get that $\depth (R[T]/P_1 \oplus R[T]/P_2) = d$.

Since $I_Q = P_1 \cap P_2$,  there is, by 
\cite [Proof of Theorem 5.1.13]  {BH},  a short exact sequence
of $R$-modules
\begin{equation}  \label {eqn!shortExactSequenceno1}
    0 \to R[T]/ I_Q \to  R[T]/P_1 \oplus R[T]/P_2 \to R[T]/(P_1+P_2) \to 0
\end {equation}
As a consequence, using  \cite [Proposition~1.2.9]  {BH}
we get 
\[
      \depth R[T]/I_Q  \; \geq  \;  \depth (R[T]/P_1 \oplus R[T]/P_2) =  d.
\]
   Since we proved above that
$\depth R[T]/I_Q \leq d$, we get $\depth R[T]/I_Q = d$.

ii)  Assume $m \geq 0$.
  Using the short exact sequence (\ref{eqn!shortExactSequenceno1})
 we have
  \[
      \HF ( m, R[T]/I_Q ) =  \HF (m, R[T]/P_1) + \HF (m, R[T]/P_2) -
                \HF (m, R[T]/(P_1+P_2))
\]      
Since $\HF (m, R[T]/P_1) = h_1(m)$, 
$\HF(m,R[T]/P_2)= (\Gamma^1 (h_2)) (m) = \sum_{j=0}^m h_2(j)$ and
$\HF(m,R[T]/ (P_1+P_2))=   h_2(m)$ the result follows.

iii) Since  $\depth R[T]/I_Q = d$,  $t \leq d$,  and the ideal of $R[T]$ generated
by    $T-f_1, f_2, \dots , f_t$  is equal to the ideal of $R[T]$ generated
by $t$ general linear elements,  Lemma~\ref {lem!aboutGeneralLinearsAndDepth} 
implies that the sequence $T-f_1, f_2, \dots , f_t$ is $R[T]/I_Q$-regular.
\end {proof}

The following two propositions motivate  Proposition~\ref {prop!aboutPaandPb}.

\begin {proposition}  \label{prop!DiscussionAboutPaPb}  
For all $m \geq 0$ we have 
\begin{equation}   \label{eqn!conca_inequality_for_Pb_pa}
         \HF (m, R/{\mathcal{P}}_b) \leq \HF (m, R/{\mathcal{P}}_a).
\end {equation}
\end {proposition}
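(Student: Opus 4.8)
The plan is to compare the Hilbert functions of $R/\mathcal{P}_a$ and $R/\mathcal{P}_b$ by playing off the two short exact sequences \eqref{eqn!shortexactseq1forPa} and \eqref{eqn!shortexactseq2forPb} against each other. Both sequences have $R/\mathcal{L}$ as the middle term, so for every $m$ we get
\begin{equation*}
  \HF(m-q-1, R/\mathcal{P}_a) = \HF(m, R/\mathcal{L}) - \HF(m, R/\mathcal{K}_a),
  \qquad
  \HF(m-q-1, R/\mathcal{P}_b) = \HF(m, R/\mathcal{L}) - \HF(m, R/\mathcal{K}_b).
\end{equation*}
Hence the claimed inequality $\HF(m, R/\mathcal{P}_b) \leq \HF(m, R/\mathcal{P}_a)$ is equivalent, after shifting indices, to $\HF(m, R/\mathcal{K}_a) \leq \HF(m, R/\mathcal{K}_b)$ for all $m$. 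So the whole problem reduces to comparing the two ``quotient'' rings $R/\mathcal{K}_a$ and $R/\mathcal{K}_b$.

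Next I would identify these two rings using \eqref{eqn!formathcalKa} and \eqref{eqn!formathcalKb}: $R/\mathcal{K}_a$ is a linear section of $k[D_\sigma] = R[T]/J_{st}$ by $d+1$ linear forms (namely $T-f_1, f_2, \dots, f_{d+1}$), while $R/\mathcal{K}_b$ is the analogous linear section of $R[T]/I_C$. Now $I_C$ is, by Lemma~\ref{lem!aboutIC}, the initial ideal of $I_G$ with respect to the reverse lexicographic order with $T > x_1 > \dots > x_n$, and by Lemma~\ref{lem!structureOfG} the ideals $I_G$ and $J_{st}$ have the same Hilbert function (both equal to $\HF(k[D_\sigma])$). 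The key point is a standard semicontinuity fact: passing to an initial ideal can only increase the Hilbert function of a generic linear section, because a Gröbner basis of $J$ reduces modulo a sufficiently general linear form to something at least as large as the reduction of $\mathrm{in}(J)$; equivalently, $\mathrm{in}_\tau(J + (\ell)) \supseteq \mathrm{in}_\tau(J) + (\mathrm{in}_\tau(\ell))$ generically, so $\dim_k \big(R/(\mathrm{in}_\tau(J) + \mathrm{in}_\tau(\ell))\big)_m \geq \dim_k \big(R/(J+(\ell))\big)_m$. Applied with $J$ corresponding to $I_G$ (whose initial ideal is $I_C$) and a general linear section, this yields exactly $\HF(m, R/\mathcal{K}_a) \leq \HF(m, R/\mathcal{K}_b)$, with the linear forms $T - f_1, f_2, \dots, f_{d+1}$ being sufficiently general for this purpose.

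I expect the main obstacle to be making the Gröbner-degeneration comparison of linear sections precise in this specific setup, rather than invoking it as a black box: one must check that the chosen linear forms $T-f_1, f_2, \dots, f_{d+1}$ are generic enough that the initial ideal of the section of $I_G$ is $I_C$ plus the initial terms of these forms, and that no drop in dimension of the relevant graded pieces occurs when comparing with the section of the original ideal $J_{st}$. Once that semicontinuity step is in place, the bookkeeping with the two short exact sequences is purely formal. An alternative, if one wants to avoid the generic-initial-ideal machinery, would be to build an explicit surjection or filtration realizing $R/\mathcal{K}_b$ as a degeneration of $R/\mathcal{K}_a$ directly from the generators $T^{q+1}$ versus $x_\sigma$ of $I_C$ versus $I_G$; but I would first try the Hilbert-function route above, since Lemmas~\ref{lem!aboutIC} and~\ref{lem!structureOfG} already hand us everything needed.
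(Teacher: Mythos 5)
Your reduction to comparing $\HF(m,R/\mathcal{K}_a)$ with $\HF(m,R/\mathcal{K}_b)$ via the two short exact sequences (\ref{eqn!shortexactseq1forPa}) and (\ref{eqn!shortexactseq2forPb}), together with the identifications (\ref{eqn!formathcalKa}) and (\ref{eqn!formathcalKb}), is exactly the paper's strategy, and so is the use of Conca-type semicontinuity for linear sections of an ideal versus its initial ideal. However, there is a genuine gap in how you invoke that semicontinuity: you apply it to the pair $(I_G, I_C)$, using Lemma~\ref{lem!aboutIC}, and this only gives that the section of $R[T]/I_G$ by $T-f_1,f_2,\dots,f_{d+1}$ has Hilbert function bounded above by that of $R/\mathcal{K}_b$. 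To conclude you then need to identify (or bound) the section of $R[T]/I_G$ against $R/\mathcal{K}_a$, i.e.\ against the section of $R[T]/J_{st}$, and your only justification is that $\HF(R[T]/I_G)=\HF(k[D_{\sigma}])$. That is not enough: the forms $T-f_1,f_2,\dots,f_{d+1}$ are $d+1$ general linear forms while both rings have dimension $d$, so the section is one step beyond an Artinian reduction, and its Hilbert function is not determined by the Hilbert function of the ring alone --- it is precisely the WLP-sensitive quantity the whole paper is about. Worse, the inequality one can actually extract from the degeneration $H_c$ in Lemma~\ref{lem!structureOfG} (general fibre versus special fibre $H_0=k[D_{\sigma}]$) goes the wrong way for your chain, so the route through $I_G$ cannot be patched by semicontinuity alone.

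The fix is close at hand and is what the paper does: instead of Lemma~\ref{lem!aboutIC}, use the fact established in the proof of Lemma~\ref{lem!technicallemma2} that $I_C$ is the initial ideal of $\phi(J_{st})$, where $\phi$ is the linear change of coordinates $x_i\mapsto x_i+T$ for $i\in\sigma$. Since a linear automorphism does not change the Hilbert function of a section by general linear forms, Conca's theorem (\cite[Theorem~1.1]{Con}) applied to $\phi(J_{st})$ and its initial ideal $I_C$ gives directly
\[
  \HF\bigl(m,\,k[D_{\sigma}]/(T-f_1,f_2,\dots,f_{d+1})\bigr)\;\leq\;
  \HF\bigl(m,\,R[T]/(I_C,\,T-f_1,f_2,\dots,f_{d+1})\bigr),
\]
that is $\HF(m,R/\mathcal{K}_a)\leq\HF(m,R/\mathcal{K}_b)$, after which your bookkeeping with the two exact sequences finishes the argument. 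So the architecture of your proposal is right, but the Gr\"obner degeneration must be taken of $J_{st}$ itself (after the coordinate change), not of $I_G$.
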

 
\begin {proof}
   Since $f_1, \dots  , f_{d+1}$ are  general linear elements of $R$ it follows that
  the ideal of $R[T]$ generated by    $T-f_1, f_2, \dots , f_{d+1}$  is equal to the ideal of 
 $R[T]$ generated by $d+1$ general linear elements.
  Since by the proof of Lemma~\ref{lem!technicallemma2}   
  $I_C$ is an initial ideal of $J_{st}$ (with respect to a suitable monomial order)
  by  \cite [Theorem~1.1]{Con} we have 
\[
    \HF (m, k[D_{\sigma}]/(T-f_1, f_2, \dots , f_{d+1}))   \leq 
         \HF (m, R[T]/(I_C, T-f_1, f_2, \dots , f_{d+1}))  
\]
for all $m \geq 0$.  Hence, Equations~(\ref{eqn!formathcalKa}) and
(\ref{eqn!formathcalKb}) imply that   
\[
    \HF (m, R/{\mathcal{K}}_a )   \leq 
         \HF (m, R/{\mathcal{K}}_b)  
\]
  for all $m \geq 0$. As a consequence, the short exact sequences 
(\ref{eqn!shortexactseq1forPa}) and (\ref{eqn!shortexactseq2forPb})
imply  Inequality~(\ref{eqn!conca_inequality_for_Pb_pa}).   \end {proof}

\begin {proposition}  \label{prop!abouttripleequalityforilesd}
 Assume $1 \leq t \leq d$. Then we have the following equality of ideals of $R$
\[
    ( I, f_1 I_L ,  f_2, \dots , f_t) : x_{\sigma} = 
    ( I, f_1 I_L ,  f_2, \dots , f_t) : f_1^{q+1} =
           (  I_L ,  f_2, \dots , f_t) .
\]
\end {proposition}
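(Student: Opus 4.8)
The plan is to prove the chain of equalities by establishing the two inclusions
\[
(I_L, f_2, \dots, f_t) \;\subseteq\; (I, f_1 I_L, f_2, \dots, f_t) : f_1^{q+1}
\;\subseteq\; (I, f_1 I_L, f_2, \dots, f_t) : x_\sigma
\;\subseteq\; (I_L, f_2, \dots, f_t),
\]
which, being circular, forces all of them to be equalities (and in particular the two colon ideals coincide). The first inclusion is the easy one: if $u \in I_L$ then $f_1^{q+1} u = f_1^q \cdot (f_1 u) \in (f_1 I_L) \subseteq (I, f_1 I_L, f_2, \dots, f_t)$, and the generators $f_2, \dots, f_t$ are annihilated into the ideal trivially; so $(I_L, f_2, \dots, f_t)$ multiplies $f_1^{q+1}$ into the ideal. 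For the middle inclusion, I would use the fact that $x_\sigma = x_1 \cdots x_{q+1}$ is a product of $q+1$ variables and that $f_1$ is a general linear form: after a general linear change of coordinates one may assume $f_1 = x_1$ (or, more carefully, argue that since $f_1$ is Zariski-general, the colon ideal $\colon f_1^{q+1}$ is contained in $\colon g^{q+1}$ for any specific linear form $g$, and $x_\sigma = x_1 \cdots x_{q+1}$ is divisible appropriately); the cleanest route is to note $x_\sigma$ and $f_1^{q+1}$ both lie in the ideal $(x_1, \dots, x_{q+1})^{q+1}$ up to the relevant identifications, so that membership after colon by the ``larger'' element $f_1^{q+1}$ implies membership after colon by $x_\sigma$ — but one must be careful, so I expect to instead reduce to the link structure directly.

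The heart of the matter is the third inclusion, $(I, f_1 I_L, f_2, \dots, f_t) : x_\sigma \subseteq (I_L, f_2, \dots, f_t)$. Here I would exploit the Stanley--Reisner structure. Recall $I = I_{D,A}$, $I_L = (I : x_\sigma)$ is (the extension of) the Stanley--Reisner ideal of $L = \lk_D \sigma$, and that $I \subseteq I_L$. The key point is that $x_\sigma \cdot I_L \subseteq I$ by definition of the colon ideal, so modulo $I$ the element $x_\sigma$ kills $I_L$; thus $(I, f_1 I_L, f_2, \dots, f_t)$ contains $f_1 I_L$, and colon-ing by $x_\sigma$ we pick up $I_L$ itself (since $x_\sigma \cdot I_L \subseteq I$) — wait, that gives $I_L \subseteq$ the colon ideal, the wrong direction. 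For the correct direction I would argue: suppose $g x_\sigma \in (I, f_1 I_L, f_2, \dots, f_t)$. Work modulo $(f_2, \dots, f_t)$, which is a regular sequence on all the relevant Cohen--Macaulay quotients by Lemma~\ref{lem!aboutGeneralLinearsAndDepth} and the depth computations (e.g.\ Proposition~\ref{prop!infoaboutIQ}(iii), Lemma~\ref{lem!aboutAandB}), so that one may assume $t = 1$ and work in $R$ modulo nothing extra, i.e.\ show $(I, f_1 I_L) : x_\sigma \subseteq I_L$. Write $g x_\sigma = a + f_1 b$ with $a \in I$, $b \in I_L$. Then $g x_\sigma - f_1 b \in I$, so $g x_\sigma \equiv f_1 b \pmod I$. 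Since $b \in I_L$ and $x_\sigma I_L \subseteq I$, multiplying by $x_\sigma$ gives $g x_\sigma^2 \equiv f_1 x_\sigma b \equiv 0 \pmod{I}$... again not immediately conclusive; the clean finish uses that $f_1$ is a nonzerodivisor on $R/I_L$ and that $x_\sigma$ is a nonzerodivisor on $R/I_L$ as well (both follow since $f_1$ is general and $x_\sigma$ corresponds to a face of $L$'s ambient complex, or rather $x_\sigma$ is a unit-like element in the appropriate localization $R/I_L \cong k[L][x_1,\dots,x_{q+1}]$ from \eqref{eqn!relatingLinksDefinitions}).

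So the decisive move is to pass to $R/I_L \cong k[L][x_1, \dots, x_{q+1}]$, where $x_\sigma = x_1 \cdots x_{q+1}$ is a nonzerodivisor (it is a product of polynomial-ring variables), and to observe that the image of $(I, f_1 I_L, f_2, \dots, f_t)$ in $R/I_L$ is just $(f_2, \dots, f_t)$, because $I \subseteq I_L$ and $f_1 I_L \subseteq I_L$. Thus $g x_\sigma \in (I, f_1 I_L, f_2, \dots, f_t)$ implies $\bar g \, \overline{x_\sigma} \in (\bar f_2, \dots, \bar f_t)$ in $R/I_L$; since $f_2, \dots, f_t$ is a regular sequence on $R/I_L$ (a quotient of the Cohen--Macaulay, indeed Gorenstein, ring $R/I_L$ of dimension $d$, with $t \le d$) and $x_\sigma$ is a nonzerodivisor on $R/(I_L, f_2, \dots, f_t)$ — this last because $R/(I_L, f_2, \dots, f_t) \cong B$-type Artinian pieces tensored appropriately, or more simply because $x_\sigma$ stays a nonzerodivisor in $k[L][x_1, \dots, x_{q+1}]/(\text{linear forms in the } x_i\text{'s and } k[L]\text{-variables})$ provided the linear forms are general — we conclude $\bar g \in (\bar f_2, \dots, \bar f_t)$, i.e.\ $g \in (I_L, f_2, \dots, f_t)$, as desired. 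The main obstacle I anticipate is precisely verifying that $x_\sigma$ remains a nonzerodivisor after killing the general linear forms $f_2, \dots, f_t$; this should follow from generality (the forms $f_i$ avoid the finitely many associated primes of $(I_L)$ and, inductively, of each $(I_L, f_2, \dots, f_j)$, none of which can contain all of $x_1, \dots, x_{q+1}$ since $\sigma$ is a face of $D$ hence $\{1, \dots, q+1\}$ spans a face-cone not in the Stanley--Reisner variety of $L$'s ambient complex), but it requires a careful associated-primes argument that is the real content of the proof.
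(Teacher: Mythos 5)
Your key idea is sound, and it is genuinely different from the paper's proof, but as written your plan has one concrete gap: the colon ideal $(I, f_1 I_L, f_2, \dots, f_t) : f_1^{q+1}$ is never actually shown to be contained in $(I_L, f_2, \dots, f_t)$. The only route you offer is the middle inclusion $(I, f_1I_L, f_2,\dots,f_t) : f_1^{q+1} \subseteq (I, f_1I_L, f_2,\dots,f_t) : x_\sigma$, and the justifications you sketch for it (``assume $f_1 = x_1$ after a change of coordinates'', or ``both elements lie in $(x_1,\dots,x_{q+1})^{q+1}$ so colon by the larger one implies colon by $x_\sigma$'') are not valid arguments: colon ideals only compare this way under a divisibility relation, and neither of $f_1^{q+1}$, $x_\sigma$ divides the other; moreover a coordinate change making $f_1 = x_1$ destroys the monomial structure of $I$ and $I_L$. (The inclusion is of course true a posteriori, since both sides equal $(I_L, f_2,\dots,f_t)$, but it cannot serve as a step of the proof.) Fortunately the gap is easily repaired by your own decisive move: work modulo $I_L$, where the image of $(I, f_1 I_L, f_2,\dots,f_t)$ is $(\bar f_2,\dots,\bar f_t)$ because $I \subseteq I_L$ and $f_1 I_L \subseteq I_L$; then $g f_1^{q+1}$ in the ideal gives $\bar g \bar f_1^{q+1} = 0$ in $R/(I_L, f_2,\dots,f_t)$, and since $f_1, f_2,\dots,f_t$ are $t \leq d$ general linear forms and $R/I_L$ is Cohen--Macaulay of dimension $d$ (Lemma~\ref{lem!aboutAandB} together with Lemma~\ref{lem!aboutGeneralLinearsAndDepth}), $f_1$ is a nonzerodivisor on that quotient, so $g \in (I_L, f_2,\dots,f_t)$. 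With that, together with the two easy inclusions of $(I_L,f_2,\dots,f_t)$ into both colon ideals and your argument for the $x_\sigma$-colon, the proposition follows and the circular chain is unnecessary. You should also finish the nonzerodivisor claim for $x_\sigma$ that you flag: since $R/(I_L, f_2,\dots,f_t)$ is Cohen--Macaulay, its associated primes are the minimal ones, and each has the form $P_w + (f_2,\dots,f_t)$ with $P_w = (x_i : i \notin \sigma \cup w)$ for $w$ a facet of $L$; for general $f_2,\dots,f_t$ ($t-1 < d$) no variable $x_i$ with $i \in \sigma$ lies in such a prime, hence neither does the product $x_\sigma$. Your sketch points in exactly this direction, so this is a matter of writing it out rather than a missing idea.

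For comparison, the paper argues quite differently: it observes that $(I_L, f_2,\dots,f_t)$ sits inside both colon ideals and then proves that all three quotients have the same Hilbert function, first for $t=1$ via the short exact sequences linking $R/J_1$, $R/J_2$ to $R[T]/(I_Q, T-f_1)$, $k[D_\sigma]$ and $R[T]/I_C$ (using $\depth R[T]/I_Q = d$ from Proposition~\ref{prop!infoaboutIQ} and the Cohen--Macaulayness of $R[T]/I_C$ from Lemma~\ref{lem!aboutIC}), and then for general $t$ by cutting with the regular sequence $f_2,\dots,f_t$. Your route, once the repair above is made, avoids $I_Q$, $I_C$ and all Hilbert-function bookkeeping, at the price of the explicit associated-primes/genericity verification; the paper's route trades that verification for the machinery it has already built for its main theorems.
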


\begin {proof} 
Recall $h_2 = \HF (R/I_L)$.
We first prove the equality for $t=1$.
For simplicity we set $\; h_3 =   \HF  (R[T]/I_Q), \;  \;  h_4 = \HF (k[D_{\sigma}])$,
\[
    J_1 =  ( I, f_1 I_L ) : x_{\sigma},  \quad  \quad
    J_2 =  ( I, f_1 I_L ) : f_1^{q+1}.
\]

Since  it is clear that, for $i=1,2$, we have
  $I_L \subset  J_i$, to prove $J_1 = J_2 = I_L$
 it is enough to   prove that   
\[
     \HF (R/J_1) = \HF (R/ J_2) = h_2.
\] 
Consider the short exact sequence
\begin {equation}   \label{eqn!secusaqr3}
    0 \to  (R/J_1) (-q-1)
          \to     R[T]/(I_Q, T-f_1)
              \to  R[T]/(I_Q, T-f_1, x_{\sigma})   \to 0
\end {equation}
 where  $(-q-1)$ means twist  by $-q-1$ and the first map
 is multiplication by $x_{\sigma}$.
We have $R[T]/(I_Q, x_{\sigma}) = k[D_{\sigma}]$.
Moreover,  the ideal of $R[T]$ generated by   $T-f_1$ is equal to the ideal 
generated by a general linear element of $R[T]$. Since $k[D_{\sigma}]$
is Gorenstein of dimension $d$ it follows from Lemma~\ref{lem!aboutGeneralLinearsAndDepth}
that $T-f_1$ is $k[D_{\sigma}]$-regular, hence
\[
    \HF( R[T]/(I_Q, T-f_1, x_{\sigma})) = \Delta^1 (h_4)
\]
Since by  Proposition~\ref{prop!infoaboutIQ} $\depth R[T]/I_Q = d$,
it follows from Lemma~\ref{lem!aboutGeneralLinearsAndDepth}
that $T-f_1$ is $R[T]/I_Q$-regular, hence
\[
    \HF( R[T]/(I_Q, T-f_1)) = \Delta^1 (h_3).
\]
As a consequence,  the short exact sequence (\ref{eqn!secusaqr3})
implies that
\begin{equation}    \label{eqn!aboutHFandDelta}
    \HF ((R/J_1) (-q-1)) = \Delta^1 ( h_3 - h_4)
\end {equation}

Combining the computation of
$h_3$ in  Proposition~\ref {prop!infoaboutIQ}
and  the computation of  $h_4$ in Lemma~\ref{lem!aboutAandB}.
we get that,  for all $m \geq 0$,
\[
    (h_3 - h_4) (m) = \sum_{j=q+1}^{m} h_2 (m-j) =
           \sum_{j=0}^{m-q-1} h_2 (j) 
\]
Hence for all $m \geq 0$  
\[
    (\Delta^1 (h_3-h_4) )(m)  =  h_2 (m-q-1).
\] 
As a consequence,  Equation~(\ref{eqn!aboutHFandDelta}) implies that
$\HF (m,R/J_1) = h_2(m)$ for all $m \geq 0$. 

Consider now the short exact sequence
\begin {equation}   \label{eqn!sndjqu2}
    0 \to  (R/J_2) (-q-1)
          \to     R[T]/(I_Q, T-f_1)
              \to  R[T]/(I_Q, T-f_1, T^{q+1})   \to 0
\end{equation}
We have $R[T]/(I_Q, T^{q+1}) = R[T]/I_C$,
which by Lemma~\ref{lem!aboutIC} is Cohen--Macaulay with same
Hilbert function  as $k[D_{\sigma}]$.  Since  the ideal of $R[T]$ generated by 
  $T-f_1$ is equal to  the ideal generated by a
general linear element of $R[T]$, we get
\[
    \HF( R[T]/(I_Q, T-f_1, f_1^{q+1})) = \Delta^1 (h_4).
\]
As a consequence, using the previously done computations of
$\HF ( R[T]/(I_Q, T-f_1))$ and     $\Delta^1 (h_3-h_4) $,
the short exact sequence   (\ref{eqn!sndjqu2})  implies
that $\HF (m,R/J_2) = h_2(m)$ for all $m \geq 0$. 
This finishes the proof of the double equality 
$J_1 = J_2 =I_L$.

We now assume $2 \leq t \leq d$.
  We set 
\[
    J_4=  ( I, f_1 I_L ,  f_2, \dots , f_t) : x_{\sigma},  \quad  \quad
    J_5 =  ( I, f_1 I_L ,  f_2, \dots , f_t) : f_1^{q+1},  
\]
\[
        J_6 =  (  I_L ,  f_2, \dots , f_t) .
\]

Clearly, for $i=4,5$, we have that $J_6 \subset  J_i$. Hence, 
to prove $J_4=J_5= J_6$ it is
enough to prove that  $\HF(R/J_4) = \HF (R/J_5) = \HF (R/J_6)$.
To prove this equality, it is enough to prove that
\[
    \HF(R/J_4) = \Delta^{t-1} \HF(R/J_1), \quad 
    \HF(R/J_5) = \Delta^{t-1} \HF(R/J_2),
\]
and
\[
    \HF(R/J_6) = \Delta^{t-1} \HF(R/I_L), 
\]
since, as we proved above, $\HF(R/J_1) = \HF (R/J_2) = \HF (R/I_L)$.

By Lemma~\ref{lem!aboutAandB}   $R/I_L$   is Gorenstein with $\dim R/I_L = d$. Since
$t \leq d$ and $f_2, \dots , f_t$ are general linear elements over
an infinite field, it follows by Lemma~\ref{lem!aboutGeneralLinearsAndDepth}
that they are a regular $R/I_L$ sequence,
hence  $    \HF(R/J_6) = \Delta^{t-1} \HF(R/I_L)$.

For $J_4$ we have  the short exact sequence
\[
  0 \to   R/J_4  (-q-1) \to   R/(I, f_1I_L, f_2, \dots ,f_t) \to
                     R/(I, f_1I_L, f_2, \dots f_t, x_{\sigma}) \to 0
\]
while for $J_5$ we have  the short exact sequence
\[
  0 \to   R/J_4  (-q-1) \to   R/(I, f_1I_L, f_2, \dots ,f_t) \to
                     R/(I, f_1I_L, f_2, \dots f_t,  f_1^{q+1}) \to 0
\]
Hence, it is enough to prove the following three equalities
\[
   \HF (R/(I, f_1I_L, f_2, \dots ,f_t))  = 
           \Delta^{t-1}   \HF (R/(I, f_1I_L)),
\]
\[
   \HF (R/(I, f_1I_L, f_2, \dots ,f_t , x_{\sigma}))  = 
         \Delta^{t-1}   \HF (R/(I, f_1I_L, x_{\sigma})),
\]
and
\[
   \HF (R/(I, f_1I_L, f_2, \dots ,f_t , f_1^{q+1}))  = 
         \Delta^{t-1}   \HF (R/(I, f_1I_L, f_1^{q+1})).
\]

Each of the three  equalities follows using
Lemma~\ref{lem!aboutGeneralLinearsAndDepth},
taking into account that $t \leq d$,   
the ideal $(T-f_1,f_2, f_3, \dots , f_t)$ of $R[T]$  
is equal to the ideal of $R[T]$ generated by $t$
general linear elements, and that 
$\depth R[T]/I_Q = d$  (by  Proposition~\ref{prop!infoaboutIQ}),
$\depth k[D_{\sigma}] = d$  (since 
$k[D_{\sigma}]$ is Gorenstein of dimension  $d$) and
$\depth R[T]/I_C = d$  (since 
by Lemma~\ref{lem!aboutIC}
$R[T]/I_C$ is Cohen--Macaulay of dimension  $d$).
\end {proof}

\begin {remark} 
As we mention below in Remark~\ref{rem!aboutPropForPaPbno2},
for $t= d+1$, which is the critical value for the
WLP properties, the triple equality of ideals 
in the statement of Proposition~\ref{prop!abouttripleequalityforilesd}
does not hold any more.
\end {remark}

For a homogeneous ideal $J \subset R$ and $t \geq 0$ 
we denote by $J_{\leq t}$ 
the ideal of $R$ generated by all homogeneous elements of $J$ 
that have degree $\leq t$. The following 
proposition is motivated by  Propositions~\ref{prop!DiscussionAboutPaPb}  
and~\ref{prop!abouttripleequalityforilesd}.

\begin {proposition}  \label {prop!aboutPaandPb}
i)  Assume $k[D_{\sigma}]$ has the WLP.  
     If $\HF(R/{\mathcal{P}}_a) = \HF (R/{\mathcal{P}}_b)$ then
     $k[D]$ has the WLP.    

ii)   Assume $k[D_{\sigma}]$ has the WLP and condition $M_{q,p_1}$ holds for $k[L]$.
     Then             $k[D]$ has the WLP if and only if 
               $\HF(R/{\mathcal{P}}_a) = \HF (R/{\mathcal{P}}_b)$.

iii) Assume   $k[D_{\sigma}]$  has the WLP and $d$ is odd. 
     Assume   the following equality of ideals of $R$ holds:
    \[
        ({\mathcal{P}}_b)_{\leq p_1-q} = ({\mathcal{P}}_c)_{\leq p_1-q}.
   \]
   Then $k[D]$ has the WLP.  
\end {proposition}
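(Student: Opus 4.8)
\emph{Strategy for all three parts.}
In each case we reduce the conclusion ``$k[D]$ has the WLP'' to ``$A$ has the WLP'' via Lemma~\ref{lem!technicallemma1}, and we exploit that $\mathcal P_a,\mathcal P_b$ fit into the exact sequences \eqref{eqn!shortexactseq1forPa}, \eqref{eqn!shortexactseq2forPb}, whose quotient terms are, by \eqref{eqn!formathcalKa}, \eqref{eqn!formathcalKb}, general hyperplane sections of general Artinian reductions of $k[D_{\sigma}]$ and of $R[T]/I_C$. Put $\bar C:=k[D_{\sigma}]/(T-f_1,f_2,\dots,f_d)$, a general Artinian reduction of $k[D_{\sigma}]$; by Lemma~\ref{lem!structureOfCno2}, $\HF(m,\bar C)=\HF(m,C)=\HF(m,A)+\sum_{j=1}^{q}\HF(m-j,B)$, and by Lemma~\ref{lem!aboutIC} the ring $C^{\flat}:=(R[T]/I_C)/(T-f_1,f_2,\dots,f_d)$ is a general Artinian reduction of $R[T]/I_C$ with $\HF(C^{\flat})=\HF(\bar C)$. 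Since $k[D_{\sigma}]$ has the WLP, $\bar C$ has the WLP and $R/\mathcal K_a=\bar C/(f_{d+1})$ has $\HF(R/\mathcal K_a)=\Delta^{+}(\HF(\bar C))$; since $R/\mathcal K_b=C^{\flat}/(f_{d+1})$ we always have $\HF(R/\mathcal K_b)\ge\Delta^{+}(\HF(\bar C))$, with equality if and only if $R[T]/I_C$ has the WLP. (Here, as repeatedly in the paper, one uses that $T-f_1,f_2,\dots,f_t$ generates the same ideal of $R[T]$ as $t$ general linear forms, so the above quotients are genuine general Artinian reductions.) Subtracting \eqref{eqn!shortexactseq1forPa} from \eqref{eqn!shortexactseq2forPb} gives, for all $m$,
\[
   \HF(m-q-1,R/\mathcal P_a)-\HF(m-q-1,R/\mathcal P_b)=\HF(m,R/\mathcal K_b)-\HF(m,R/\mathcal K_a)\ge 0,
\]
so $\HF(R/\mathcal P_a)=\HF(R/\mathcal P_b)$ holds if and only if $R[T]/I_C$ has the WLP.

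\emph{Parts i) and ii).}
For i): under the hypothesis $\HF(R/\mathcal P_a)=\HF(R/\mathcal P_b)$ the displayed identity forces $R[T]/I_C$ to have the WLP; hence $C$, hence $A$, hence $k[D]$ has the WLP by Lemmas~\ref{lem!technicallemma1} and~\ref{lem!technicallemma3}. For ii): the direction ``$\Leftarrow$'' is i); for ``$\Rightarrow$'', if $k[D]$ has the WLP and $M_{q,p_1}$ holds for $B$, then $A$ has the WLP (Lemma~\ref{lem!technicallemma1}) and therefore $C$, hence $R[T]/I_C$, has the WLP (Lemmas~\ref{lem!technicallemma3}, \ref{lem!technicallemma1}); the display then yields $\HF(R/\mathcal P_a)=\HF(R/\mathcal P_b)$.

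\emph{Part iii).}
Now $d$ is odd, so $p_1=p_2$ and, by Lemma~\ref{lem!wlpinthegorensteincase} and Remark~\ref{rem!aboutwlpforA}, it suffices to show that multiplication by a general $\omega\in R_1$ on $A_{p_1}\to A_{p_1+1}$ is injective. \textit{(a)} From $\mathcal P_c\subseteq\mathcal P_a$ and $\mathcal P_c\subseteq\mathcal P_b$ (because $x_{\sigma}I_L\subseteq I$, $f_1^{q+1}I_L\subseteq f_1I_L$, and $f_2,\dots,f_{d+1}\in\mathcal L$), the fact that $R/\mathcal P_c$ is a general Artinian reduction of $R/I_L$ (hence $\HF(R/\mathcal P_c)=\HF(B)$), Proposition~\ref{prop!DiscussionAboutPaPb}, and the hypothesis $(\mathcal P_b)_{\le p_1-q}=(\mathcal P_c)_{\le p_1-q}$, one obtains $\HF(j,R/\mathcal P_a)=\HF(j,R/\mathcal P_b)=\HF(j,B)$ for all $j\le p_1-q$. \textit{(b)} Set $S_1:=R[T]/(I_Q,T-f_1,f_2,\dots,f_d)\cong R/(I,f_1I_L,f_2,\dots,f_d)$ and $\bar B:=R/(I_L,f_2,\dots,f_d)$, which is Cohen--Macaulay of dimension one; then $R/\mathcal L=S_1/(f_{d+1})$. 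Since $\depth R[T]/I_Q=d$ (Proposition~\ref{prop!infoaboutIQ}), $T-f_1,f_2,\dots,f_d$ is $R[T]/I_Q$-regular, and Proposition~\ref{prop!infoaboutIQ}(ii) gives $\HF(m,S_1)=\HF(m,A)+\sum_{i<m}\HF(i,B)$. Feeding \textit{(a)} and $\HF(R/\mathcal K_a)=\Delta^{+}(\HF(\bar C))$ into \eqref{eqn!shortexactseq1forPa}, and using that $\HF(\bar C)$ is unimodal and symmetric of socle degree $d=2p_1+1$, a short computation gives $\HF(m,R/\mathcal L)=\HF(m,S_1)-\HF(m-1,S_1)$ for all $m\le p_1+1$; equivalently, multiplication by $f_{d+1}$ is injective on $(S_1)_j\to(S_1)_{j+1}$ for every $j\le p_1$. \textit{(c)} Tensor the two short exact sequences
\[
   0\to\bar B(-1)\xrightarrow{\,f_1\,}S_1\to A\to 0,\qquad 0\to\bar B(-q-1)\xrightarrow{\,x_{\sigma}\,}S_1\to\bar C\to 0
\]
with $R/(f_{d+1})$; as $f_{d+1}$ is $\bar B$-regular ($\bar B$ Cohen--Macaulay, $\bar B/(f_{d+1})\cong B$ Artinian), the resulting long exact sequences carry a single $\mathrm{Tor}_1$ term, which by \textit{(b)} and the WLP of $\bar C$ vanishes in degrees $\le p_1$. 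In degree $p_1+1$ this yields first that $B_{p_1-q}=(\bar B/(f_{d+1}))_{p_1-q}\xrightarrow{\,x_{\sigma}\,}(R/\mathcal L)_{p_1+1}$ is an isomorphism (using $(R/\mathcal K_a)_{p_1+1}=\Delta^{+}(\HF(\bar C))(p_1+1)=0$ by symmetry), and second that $\ker(\omega\colon A_{p_1}\to A_{p_1+1})$ is isomorphic to the kernel of the induced map $B_{p_1}=(\bar B/(f_{d+1}))_{p_1}\xrightarrow{\,f_1\,}(R/\mathcal L)_{p_1+1}$. Composing with the first isomorphism gives a map $B_{p_1}\to B_{p_1-q}$ between Poincar\'e-dual components of the Gorenstein ring $B$ (here $d$ odd is essential, so that $\dim B_{p_1}=\dim B_{p_1-q}$ and $\dim A_{p_1}=\dim A_{p_1+1}$), and a diagram chase — comparing this map, through the $R$-linearity of the connecting homomorphisms, with the multiplication-by-$x_{\sigma}$ isomorphism — shows it is an isomorphism, so the kernel is $0$.

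\emph{Main obstacle.}
The serious point is step~\textit{(c)} of Part~iii): the rings $R[T]/I_Q$, $S_1$, $R/\mathcal L$ are not Cohen--Macaulay ($\depth R[T]/I_Q=d<d+1=\dim R[T]/I_Q$), so the ranks of the multiplication maps in play cannot be extracted from regular-sequence bookkeeping alone. The hypothesis $(\mathcal P_b)_{\le p_1-q}=(\mathcal P_c)_{\le p_1-q}$ is exactly the input that kills the finite-length defect $H^0_{\mathfrak m}(S_1)$ in degrees $\le p_1$ and permits the two long exact sequences to be compared in the critical degree $p_1+1$; extracting from that comparison an actual \emph{isomorphism} (rather than merely a map between equal-dimensional spaces) is the delicate step, and it is where the Gorenstein self-duality forced by $d$ being odd is used.
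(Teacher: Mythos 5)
Your parts i) and ii) are correct and follow essentially the paper's own route: the two exact sequences (\ref{eqn!shortexactseq1forPa}), (\ref{eqn!shortexactseq2forPb}) translate $\HF(R/{\mathcal P}_a)=\HF(R/{\mathcal P}_b)$ into $\HF(R/{\mathcal K}_a)=\HF(R/{\mathcal K}_b)$, the WLP of $k[D_\sigma]$ identifies $\HF(R/{\mathcal K}_a)$ with $\Delta^{+}(\HF(C))$, and equality forces the WLP of $C$ (equivalently of $R[T]/I_C$), whence $A$ and $k[D]$ have the WLP by Lemmas~\ref{lem!technicallemma3} and~\ref{lem!technicallemma1}; the converse in ii) runs the same chain backwards. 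The genericity bookkeeping (that $T-f_1,f_2,\dots,f_{d+1}$ behaves like $d+1$ general forms, and that $f_{d+1}$ is a Lefschetz element for $\bar C$ and $C^{\flat}$) is at the same level of detail the paper itself uses.

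Part iii) has a genuine gap. Your steps (a) and (b) are fine (the two short exact sequences you write down are correct, the one with $x_\sigma$ being Proposition~\ref{prop!abouttripleequalityforilesd} at $t=d$ and the one with $f_1$ verifiable by the same Hilbert function count), and the Tor computation does identify $\Ker(\omega\colon A_{p_1}\to A_{p_1+1})$ with the kernel of an induced map $(\bar B/f_{d+1}\bar B)_{p_1}\to(\bar B/f_{d+1}\bar B)_{p_1-q}$ between equal-dimensional Poincar\'e-dual components. But the final assertion that ``a diagram chase shows it is an isomorphism'' is not an argument: injectivity of that map \emph{is} the statement $\Ker(\omega\colon A_{p_1}\to A_{p_1+1})=0$ you are trying to prove, so the chase is circular unless you exhibit an independent reason. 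Concretely, injectivity amounts to the degree-$p_1$ colon identity $({\mathcal L}:f_1)_{p_1}=({\mathcal P}_c)_{p_1}$ at the critical value $t=d+1$, exactly the regime where Proposition~\ref{prop!abouttripleequalityforilesd} no longer applies (cf.\ Remark~\ref{rem!aboutPropForPaPbno2}), and it would encode $M_{q,p_1}$-type information that is \emph{not} among the hypotheses of iii); your own ``main obstacle'' paragraph concedes that this is the unsettled step. The irony is that your step (a) already suffices via a much shorter route, which is the paper's: from $\HF(j,R/{\mathcal P}_a)=\HF(j,R/{\mathcal P}_b)$ for $j\le p_1-q=p_2-q$, the two exact sequences give $\HF(i,R/{\mathcal K}_a)=\HF(i,R/{\mathcal K}_b)$ for all $i\le p_2+1$; the WLP of $k[D_\sigma]$ gives $\HF(p_2+1,R/{\mathcal K}_a)=0$, hence $\HF(p_2+1,R/{\mathcal K}_b)=0$, and since both are standard graded quotients of $R$ their Hilbert functions vanish from degree $p_2+1$ on, so they agree in all degrees; then the exact sequences give $\HF(R/{\mathcal P}_a)=\HF(R/{\mathcal P}_b)$ and your part i) (or Corollary~\ref{corol!HFofsectionofC} together with Lemmas~\ref{lem!technicallemma3} and~\ref{lem!technicallemma1}, as in the paper) concludes.
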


\begin {proof}
We  first prove i).
The short exact sequences (\ref{eqn!shortexactseq1forPa}) and (\ref{eqn!shortexactseq2forPb})
imply that $\HF (R/{\mathcal{K}}_a) = \HF (R/{\mathcal{K}}_b)$  if and only if
 $\HF(R/{\mathcal{P}}_a) = \HF (R/{\mathcal{P}}_b)$.
By Equation~(\ref{eqn!formathcalKa})
\[
     R/{\mathcal{K}}_a  =  k[D_{\sigma}]/( T-f_1, f_2, \dots , f_{d+1}). 
\] 
Since  the ideal  $(T-f_1, \dots , f_{d+1})$ is equal to the ideal
of $R[T]$ generated by $d+1$ general linear elements, 
the assumption that $k[D_{\sigma}]$ has the WLP
and the fact  (see Lemma~ \ref{lem!structureOfCno2})
that  $\HF(C) $ is equal to the HF of a general Artinian 
reduction of $k[D_{\sigma}]$ imply that
\[
    \HF (R/{\mathcal{K}}_a) = \Delta^+ (\HF (C)).
\]
Hence from  the assumption $\HF(R/{\mathcal{P}}_a) = \HF(R/{\mathcal{P}}_b)$ it follows that
\[
    \HF (R/{\mathcal{K}}_b) =   \HF (R/{\mathcal{K}}_a) = \Delta^+ (\HF (C))  
\]  
hence $C$ has the WLP.  Lemma~\ref{lem!technicallemma3} 
 implies that $A$ has the WLP. Hence, 
by  Lemma~\ref {lem!technicallemma1} $k[D]$ has the WLP.

ii) We  assume that $k[D_{\sigma}]$ has the WLP, condition $M_{q,p_1}$ holds for $k[L]$
   and    $\HF(R/{\mathcal{P}}_a) = \HF (R/{\mathcal{P}}_b)$.
    From Part i) of the present proposition we get that     
                 $k[D]$ has the WLP.

   We now  assume that both 
   $k[D_{\sigma}]$ and $k[D]$ have the WLP and  condition $M_{q,p_1}$ holds for $k[L]$ 
   and we will prove that 
    $\HF(R/{\mathcal{P}}_a) = \HF (R/{\mathcal{P}}_b)$.
    Combining    Lemmas~\ref{lem!technicallemma1} and
    \ref{lem!technicallemma3}  we get that $C$ has the WLP.
    Hence both $k[D_{\sigma}]$ and $C$ have the WLP.
    Since by  Lemma~ \ref{lem!structureOfCno2}
    $\HF(C) $ is equal to the HF of a general Artinian 
    reduction of $k[D_{\sigma}]$, we get that 
    $\HF (R/{\mathcal{K}}_b) =   \HF (R/{\mathcal{K}}_a)$
    which, 
   using the two short exact sequences (\ref{eqn!shortexactseq1forPa})
   and  (\ref{eqn!shortexactseq2forPb}),
   implies    $\HF(R/{\mathcal{P}}_a) = \HF (R/{\mathcal{P}}_b)$.

 iii)    Since $d$ is odd we have $p_1 = p_2 = (d-1)/2$.   Since always ${\mathcal{P}}_c \subset {\mathcal{P}}_a$, we get that 
     for all $m \geq 0$   
    \[
               \HF (m, R/{\mathcal{P}}_a)  \leq \HF (m,  R/{\mathcal{P}}_c).
    \]
    Moreover,  by Proposition~\ref {prop!DiscussionAboutPaPb} we have 
     that    $\HF (m, R/{\mathcal{P}}_b) \leq \HF (m, R/{\mathcal{P}}_a)$ for all $m \geq 0$. 
    Using  that $p_1=p_2$ and the assumption 
   $({\mathcal{P}}_b)_{\leq p_1-q} = ({\mathcal{P}}_c)_{\leq p_1-q}$ we get
    \[
               \HF (i , R/{\mathcal{P}}_a)  =  \HF (i,  R/{\mathcal{P}}_b)
    \] 
    for all $i \leq p_2-q$.  As a consequence,
    the two short exact sequences (\ref{eqn!shortexactseq1forPa})
   and (\ref{eqn!shortexactseq2forPb})  imply that 
   \begin {equation}   \label{eqn!hfno2no3no4}
                   \HF (i , R/{\mathcal{K}}_a)  =  \HF (i,  R/{\mathcal{K}}_b)
   \end {equation}
   for all $ i \leq  p_2-q + q+1 = p_2 +1$.   The assumption that
   $k[D_{\sigma}]$ has the WLP  gives $\HF (p_2+1, R/{\mathcal{K}}_a) = 0$.
   Hence, Equation~(\ref{eqn!hfno2no3no4})  implies
   $\HF (p_2+1, R/{\mathcal{K}}_b)= 0 $.  As a consequence, 
   Equation~(\ref{eqn!hfno2no3no4}) implies
   $\HF (R/{\mathcal{K}}_a) = \HF (R/{\mathcal{K}}_b)$.  Hence,
   Corollary~ \ref{corol!HFofsectionofC} implies that $C$ has the WLP,
   which by Lemma~\ref{lem!technicallemma3} implies that $A$ has the WLP
   which by Lemma~\ref{lem!technicallemma1} implies that
   $k[D]$ has the WLP.   
\end {proof}

\begin {remark}  \label {rem!aboutPropForPaPbno2}
  It is clear that  we always have ${\mathcal{P}}_c \subset {\mathcal{P}}_a \cap {\mathcal{P}}_b$.
   Macaulay2 \cite{GS} computations for some $1$-faces
  of the boundary complex of the cyclic
  polytope with $10$ vertices and dimension $6$    
  give examples such that   ${\mathcal{P}}_a,{\mathcal{P}}_b$ have
   the same HF but are not equal as ideals of $R$
 and, moreover, the ideal ${\mathcal{P}}_c$ is a proper  subset of ${\mathcal{P}}_a \cap {\mathcal{P}}_b$. 
\end {remark}

\begin {remark}   The present remark is related to Part iii) of  Proposition~\ref {prop!aboutPaandPb}.
   Macaulay2   computations suggests  that independently of whether 
   $d$ is even or odd  the inequality  $({\mathcal{P}}_b)_{\leq p_1-q} = ({\mathcal{P}}_c)_{\leq p_1-q}$ 
   perhaps holds.   Can it be proven theoretically?  However,  when $d$ is even it seems
  that even if we assume that  $k[D_{\sigma}]$ has the WLP and
  $({\mathcal{P}}_b)_{\leq p_1-q} = ({\mathcal{P}}_c)_{\leq p_1-q}$ 
  it is not clear  how to conclude that $k[D]$ has the WLP.
\end  {remark}

\begin {remark}   \label {rem!secondproofofThmforhighq}
 Assume $q > p_2$. 
We give a second proof
 of  Corollary~\ref{cor!theiffstatement}
 that does not  use the ring $R[T]/I_G$.
The two short exact sequences (\ref{eqn!shortexactseq1forPa})
and (\ref{eqn!shortexactseq2forPb})  imply that 
$\HF (i, R/{\mathcal{K}}_a) = \HF (i, R/{\mathcal{K}}_b)$ for all $0 \leq i \leq q$.
By Equation~(\ref{eqn!formathcalKa})
$R/{\mathcal{K}}_a$ is isomorphic to a general 
linear section of a general Artinian reduction of $k[D_{\sigma}]$,
while  by Equation~(\ref{eqn!formathcalKb})
$R/{\mathcal{K}}_b$ is isomorphic to a general 
linear section  of $C$. 

 Assume first  that $k[D_{\sigma}]$ has the WLP,
then $\HF (p_2+1, R/{\mathcal{K}}_a) = 0$. Since $q > p_2$ we get
that $\HF (p_2+1, R/{\mathcal{K}}_b)= 0 $.  As a consequence, 
$\HF (R/{\mathcal{K}}_a) = \HF (R/{\mathcal{K}}_b)$.  Hence,
Corollary~ \ref{corol!HFofsectionofC} implies that $C$ has the WLP,
which by Lemma~\ref{lem!technicallemma3} implies that $A$ has the WLP
which by Lemma~\ref{lem!technicallemma1} implies that
$k[D]$ has the WLP.

Conversely assume that  $k[D]$ has the WLP. By Lemma~\ref{lem!technicallemma1}
$A$ has the WLP. Since $q > p_2$  and $p_2 \geq p_1$ by Lemma~\ref{lem!aboutMqforhighq}
property $M_{q,p_1}$ holds for $B$.  Hence Lemma~\ref{lem!technicallemma3}
implies that $C$ has the WLP. Then $\HF (p_2+1, R/{\mathcal{K}}_b) = 0$. 
Since $q > p_2$ we get that $\HF (p_2+1, R/{\mathcal{K}}_a) = 0$.  As a consequence, 
$\HF (R/{\mathcal{K}}_a) = \HF (R/{\mathcal{K}}_b)$. 
Hence, Corollary~\ref{corol!HFofsectionofC} implies 
that $k[D_{\sigma}]$ has the WLP. This finishes the second proof
 of  Corollary~\ref{cor!theiffstatement}.
\end {remark}

\subsection {An interesting example not coming from simplicial 
        complexes}  \label{subs!useful_counterexample}

The following example   is related to   Remark~\ref{rem!aboutUsingG} below
and is  taken from \cite[Example 2.6]{ScSe}.
Assume $k$ is an infinite field of characteristic $0$
and $R=k[x_1, \dots , x_4]$
with the degrees of all variables equal to $1$.
   Consider the homogeneous ideal
\[
   I = (x_1^3, x_2^3, x_3^3, x_4^3, (x_1+x_2+x_3+x_4)^3 ) \subset R
\]
The ideal
$I$ is an Artinian  (hence Cohen--Macaulay) codimension $4$ almost complete intersection without
the WLP.   We set $I_L = (I : x_1x_2) \subset R$. 
Macaulay2 computations  suggest that  the Artinian ideal
$(I ,  T^2, TI_L) \subset  R[T]$  does not have the WLP, while the Artinian ideal
$(I , T^2-x_1x_2, TI_L) \subset  R[T]$
does  have the WLP.   
 
\subsection {Final Remarks}  \label{subs!further_discussions23}

In the following remarks we  keep assuming that 
$D$ is a Gorenstein* simplicial complex and $k$ is an infinite field.

\begin {remark}  
Suppose 
\[
     D_0 = D, D_1,  \dots  ,  D_m
\]
is a finite sequence of simplicial complexes  such that,
for all $0 \leq i \leq m-1$, the complex $D_{i+1}$ is obtained 
from $D_{i}$ by a stellar subdivision with respect to
a face $\sigma_{i}$ of $D_{i}$  with
$2(\dim \sigma_{i} ) > \dim D +1$.   Then,  
by Corollary~\ref{cor!theiffstatement} the Stanley--Reisner ring  $k[D]$ 
has the WLP
if and only if $k[D_m]$ has the WLP.  Is it possible 
to prove that starting from $D$  there exists a sequence of stellar subdivisions
as above with $k[D_m]$ WLP?  Then it would follow that 
$k[D]$ has the WLP. Compare also \cite  [Conjecture~4.12]{KuNe}.
\end {remark}

\begin {remark}  Recall $I_C = (I,  T^{q+1},  T I_L )$. 
Assume  that $k[D_{\sigma}]$ has the WLP.
Is it possible to prove that  $R[T]/I_C$ has the WLP?   If so, combining 
Lemmas~\ref{lem!technicallemma1} and  \ref{lem!technicallemma3}
it would then follow   that  $k[D]$ has the WLP.
\end {remark}

\begin {remark}    \label {rem!aboutUsingG}
 Recall $I_G = (I,  T^{q+1}- x_{\sigma},  T I_L )$,
$G=R[T]/(I_G+(f_1, \dots ,f_d))$.  Assume  $k[D_{\sigma}]$ has the  WLP. 
Then by Lemma~\ref{lem!structureOfG}
$R[T]/I_G$ and $G$ have the WLP.  But it is not clear how to use that to prove
that $k[D]$  has the WLP.   Compare also the example 
in Subsection~\ref{subs!useful_counterexample} which does not
come from simplicial complexes.

In general  $G$ WLP  only implies      $\HF (G/(f_{d+1}+T)) = \Delta^+ (\HF (G)$).
However, some Macaulay2 computational evidence suggests that perhaps in the Gorenstein* setting
it holds  that
\begin{equation}   \label{eqn!conjecturealForLefGa}
          \HF (G/(f_{d+1})) = \Delta^+ (\HF (G)).
\end{equation}
Is it possible to theoretically prove Equation (\ref{eqn!conjecturealForLefGa})?
Assume Equation  (\ref{eqn!conjecturealForLefGa}) holds. Then the  multiplication
by $f_{d+1}$ map $G_{p_1} \to G_{p_1+1}$ is injective.
Since by the proof of   Lemma~\ref{lem!structureOfG}
$ A_i \subset G_i$ for all $i$, 
we get that   the  multiplication
by $f_{d+1}$ map $A_{p_1} \to A_{p_1+1}$ is injective.
By Remark~\ref{rem!aboutwlpforA} $A$ has the WLP, hence by   
Lemma~\ref{lem!technicallemma1} $k[D]$ has the WLP.
\end {remark}

\end{document}